\definecolor{shadecolor}{gray}{0.875}
\definecolor{col}{RGB}{42, 95, 151}
\numberwithin{equation}{section}
\theoremstyle{plain}
\newtheorem{theorem}{Theorem}[section]
\newtheorem*{lemma*}{Lemma}
\newtheorem{lemma}[theorem]{Lemma}
\newtheorem*{theorem*}{Theorem}
\newtheorem{proposition}[theorem]{Proposition}
\newtheorem*{proposition*}{Proposition}
\newtheorem{corollary}[theorem]{Corollary}
\newtheorem*{corollary*}{Corollary}
\theoremstyle{definition}
\newtheorem{remark}[theorem]{Remark}
\newtheorem*{remark*}{Remark}
\newtheorem*{definition*}{Definition}
\newtheorem{definition}[theorem]{Definition}
\newtheorem*{example*}{Example}
\newtheorem{example}[theorem]{Example}
\newtheorem{question}{Question}
\newtheorem*{question*}{Question}
 \newtheorem{claim}[theorem]{Claim}
\def\min{\operatorname{min}}
\def\rank{\operatorname{rank}}
\def\c1{\operatorname{c_1}}
\def\c2{\operatorname{c_2}}
\def\Spec{\operatorname{Spec}}
\def\Proj{\operatorname{Proj}}
\def\sing{\operatorname{sing}}
\def\CC{{\mathbb C}}
\def\ZZ{{\mathbb Z}}
\def\PP{{\mathbb P}}
\def\E{{\mathscr E}}
\def\OO{{\mathcal O}}
\def\AA{{\mathbb A}}
\def\+{\oplus}                   
\def\*{\otimes}
\DeclareMathOperator{\Gr}{Gr}
\def\Pic{\operatorname{Pic}}
\def\Hom{\operatorname{Hom}}
\def\Sym{\operatorname{Sym}}
\DeclareMathOperator{\Br}{Br}
\DeclareMathOperator{\OG}{OG}
\DeclareMathOperator{\GO}{GO}
\DeclareMathOperator{\GL}{GL}
\DeclareMathOperator{\Ogroup}{O}
\DeclareMathOperator{\SO}{SO}
\DeclareMathOperator{\Lie}{Lie}
\DeclareMathOperator{\BGO}{BGO}
\DeclareMathOperator{\BSO}{BSO}
\newcommand{\langto}{\xrightarrow{\hspace*{0.4cm}}}
\def\myrightarrow{{\setbox\z@\hbox{$\rightarrow$}\dimen0\ht\z@\multiply\dimen0 6\divide\dimen0 10\ht\z@\dimen0\box\z@}}
\def\myrightarrowfill@{\arrowfill@\relbar\relbar\myrightarrow}
\newcommand{\myxrightarrow}[2][]{\ext@arrow 0359\myrightarrowfill@{#1}{#2}}
\def\myleftarrow{{\setbox\z@\hbox{$\leftarrow$}\dimen0\ht\z@\multiply\dimen0 6\divide\dimen0 10\ht\z@\dimen0\box\z@}}
\def\myleftarrowfill@{\arrowfill@\myleftarrow\relbar\relbar}
\newcommand{\myxleftarrow}[2][]{\ext@arrow 3095\myleftarrowfill@{#1}{#2}}
\renewcommand\setminus{-}
\newcommand{\longsquiggly}{\xymatrix{{}\ar@{~>}[r]&{}}}
\theoremstyle{definition}
\newtheorem{say}[theorem]{}
\newtheorem*{ack}{Acknowledgments}      
\newtheorem{defn-theorem}[theorem]{Definition--Theorem}  
\newtheorem{defn-lem}[theorem]{Definition--Lemma}  
\theoremstyle{remark}
\renewcommand{\c}[0]{{\mathbb C}}  
\renewcommand{\o}[0]{{\mathcal O}} 
\newcommand{\z}[0]{{\mathbb Z}}
\renewcommand{\a}[0]{{\mathbb A}}
\newcommand{\p}[0]{{\mathbb P}}
\newcommand{\f}[0]{{\mathbb F}}
\newcommand{\map}[0]{\dasharrow}
\newcommand{\qtq}[1]{\quad\mbox{#1}\quad}
\newcommand{\pic}[0]{\operatorname{Pic}}
\newcommand{\tors}[0]{\operatorname{tors}}
\newcommand{\grass}[0]{\operatorname{Grass}}
\newcommand{\cl}[0]{\operatorname{Cl}}
\newcommand{\tsum}[0]{\textstyle{\sum}}
\def\loccoh#1.#2.#3.#4.{H^{#1}_{#2}(#3,#4)}
\DeclareMathAlphabet{\mathchanc}{OT1}{pzc}%
                                {m}{it}
\def\Kollar{Koll\'ar}
\let\@wraptoccontribs\wraptoccontribs
\title[Fano varieties with torsion in the third cohomology group]{Fano varieties with torsion\\ in the third cohomology group}
\author[John Christian Ottem and Jørgen Vold Rennemo]{John Christian Ottem and Jørgen Vold Rennemo \vspace{0.06cm}\\with an appendix by János Koll\'ar}
 \address{Department of Mathematics,
 University of Oslo,
 Box 1053, Blindern,
 0316 Oslo, Norway}
 \email{johnco@math.uio.no}
 \address{Department of Mathematics,
 University of Oslo,
 Box 1053, Blindern,
 0316 Oslo, Norway}
 \email{jorgeren@math.uio.no}
\address{Princeton University, Princeton NJ 08544-1000, USA}
  \email{kollar@math.princeton.edu}
\date{\today}
\begin{document}

\maketitle

\vspace{-0.8cm}
 \begin{abstract}
We construct first examples of Fano varieties with torsion in their third cohomology group. The examples are constructed as double covers of linear sections of rank loci of symmetric matrices, and can be seen as higher-dimensional analogues of the Artin--Mumford threefold. As an application, we answer a question of Voisin on the coniveau and strong coniveau filtrations of rationally connected varieties.
 \end{abstract}
\def\iff{\ensuremath{\Longleftrightarrow}}
\thispagestyle{empty}
\def\tors{{\rm tors}}
\def\Tors{\operatorname{Tors}}






\section{Introduction}
If $X$ is a nonsingular complex projective variety, the torsion subgroup of the integral cohomology group $H^3(X,\ZZ)$ is an important stable birational invariant. It was introduced by Artin and Mumford in \cite{artinmumford}, where they used the invariant to show that a certain unirational threefold is not rational.

For rationality questions, perhaps the most interesting class of varieties is that of Fano varieties, that is, smooth varieties with ample anticanonical divisor. In dimension at most 2, these are all rational, with $H^3(X,\ZZ) = 0$.
In dimension 3, there are 105 deformation classes of Fano varieties \cite{fanobook,MM1,MM2}, and direct inspection shows that in each class the group $H^3(X,\ZZ)$ is torsion free. 
Beauville asked on MathOverflow whether the same statement holds for Fano varieties in all dimensions \cite{mathoverflow}.\footnote{An incorrect counterexample is proposed in the answer to \cite{mathoverflow}; see Section \ref{AMcase}.}
In this paper, we answer the question in the negative.
\begin{theorem}\label{mainthm1}
For each even $d\ge 4$, there is a $d$-dimensional Fano variety $X$ of Picard rank 1 with $H^3(X,\ZZ)=\ZZ/2$.
\end{theorem}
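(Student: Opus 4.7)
I would construct $X$ as a smooth double cover of a carefully chosen linear section of a symmetric rank locus. Fix integers $n,r$ with $n-r+1=d$ (for instance $r=2$, $n=d+1$) and let $V=\CC^n$. The rank locus $S_r\subset\PP(\Sym^2V^{\vee})$ of symmetric matrices of rank $\le r$ has singular locus $S_{r-1}$, and its smooth part $S_r^{\circ}:=S_r\setminus S_{r-1}$ carries a tautological rank-$r$ quotient bundle $\mathcal Q$ with a fiberwise non-degenerate symmetric form. Choose a general linear subspace $L\subset\PP(\Sym^2V^{\vee})$ of codimension $\dim S_{r-1}$; then $Y:=S_r\cap L$ has dimension $d$ with finitely many isolated nodes located at $L\cap S_{r-1}$. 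On $Y^\circ:=Y\setminus\Sing(Y)$, the orthogonal bundle $\mathcal Q|_{Y^\circ}$ carries a natural $\ZZ/2$-class $\alpha\in H^2(Y^\circ,\ZZ/2)$ -- the obstruction to finding a spin/orientation lift, equivalently the class of the Clifford/Azumaya gerbe associated to the quadratic form -- and a transversality check shows $\alpha\ne 0$ for general $L$. Define $X^\circ\to Y^\circ$ to be the étale double cover classified by $\alpha$, and let $\pi\colon X\to Y$ be its unique extension to a finite double cover.

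Next I would verify that $X$ is a smooth Fano variety of Picard rank $1$. Smoothness is a local calculation at the preimages of the nodes: using an explicit normal form for the rank drop of $\mathcal Q$ from $r$ to $r-1$, the cover looks locally like a standard small resolution of the node, so the total space is smooth. Because the branch locus of $\pi$ has codimension $\ge 2$, the ramification formula gives $K_X=\pi^*K_Y$; the class $K_Y$ is computed by adjunction from the known anticanonical data of $S_r$ and is anti-ample for the chosen parameters, so $X$ is Fano. Picard rank $1$ of $Y$ follows from an iterated Grothendieck--Lefschetz argument applied to the linear section, together with $\Pic(S_r)=\ZZ$; and $\Pic(X)=\pi^*\Pic(Y)$ is deduced from the fact that $\alpha$ does not lie in the image of $c_1\colon\Pic(Y^\circ)\otimes\ZZ/2\to H^2(Y^\circ,\ZZ/2)$, so no new (anti-invariant) divisor classes are introduced by the cover.

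The main step is the identification $H^3(X,\ZZ)=\ZZ/2$. Rationally, the Leray decomposition $\pi_*\QQ=\QQ\oplus\sL$ for a rank-one local system $\sL$ on $Y^\circ$, combined with Lefschetz-type vanishing for $H^3(Y,\QQ)$ and for the twisted $H^3(Y^\circ,\sL)$, yields $H^3(X,\QQ)=0$. For the torsion, the tautological relation $\pi^*\alpha=0\in H^2(X^\circ,\ZZ/2)$ together with the Gysin sequence comparing $H^*(X,\ZZ/2)$ to $H^*(X^\circ,\ZZ/2)$ and the Bockstein $H^2(X,\ZZ/2)\to H^3(X,\ZZ)$ produces a nontrivial class of order $2$ in $H^3(X,\ZZ)$. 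The most delicate part will be showing this is the \emph{entire} torsion subgroup and not something larger: I would pass to a small resolution $\tilde Y\to Y$ of the nodes (which exists analytically since the local model is a standard ordinary double point), pull back to an honest étale cover $\tilde X\to\tilde Y$, compute $H^*(\tilde X,\ZZ)$ via the Leray spectral sequence for the étale cover and Lefschetz for $\tilde Y$, and then track the effect of the contractions $\tilde X\to X$ and $\tilde Y\to Y$ through matching Mayer--Vietoris / blow-up sequences on both sides.
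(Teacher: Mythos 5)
Your overall template---linear sections of rank loci of symmetric matrices, a double cover coming from the two families of maximal isotropic subspaces (equivalently from the Clifford algebra of the tautological quadratic bundle), and a Lefschetz-type reduction of the cohomology---matches the paper's strategy. But the parameter choice $r=2$ is fatal. The double cover $W_{2,n}\to\overline Z_{2,n}$ is simply $\PP^{n-1}\times\PP^{n-1}\to\Sym^2\PP^{n-1}$, so your $X$ is a complete intersection of $(1,1)$-divisors in $\PP^{n-1}\times\PP^{n-1}$, and the Lefschetz hyperplane theorem gives $H^3(X,\ZZ)\cong H^3(\PP^{n-1}\times\PP^{n-1},\ZZ)=0$ whenever $\dim X\ge 4$. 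The paper's torsion class originates from $\nu=\beta_\ZZ(w_2)\in H^3(\BSO(4),\ZZ)=\ZZ/2$, and this group vanishes for $\BSO(2)\simeq\CC\PP^\infty$; equivalently, the even Clifford algebra of a rank-$2$ quadratic bundle is commutative, so the Azumaya gerbe you invoke is trivial. One must take $r=4$.

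Worse, for even $r\ge 4$ your two demands---that $Y$ meet $Z_{r-1}$ in finitely many points and that $X$ be Fano---are mutually exclusive. Finiteness forces $c=\dim Z_{r-1}=(r-1)n-\tfrac{(r-1)(r-2)}{2}-1$, while $K_X=(c-\tfrac{rn}{2})H$ is anti-ample only if $c<\tfrac{rn}{2}$; combining these gives $n<(r-1)+\tfrac{2}{r-2}\le r$, contradicting $n\ge r$. Only $r=2$ satisfies both, and then there is no torsion. Moreover the singular points of $Y$ at $L\cap Z_{r-1}$ are nodes only when $n-r=1$ (i.e.\ $d=2$); for $d\ge 4$ the transverse slice of $Z_r$ along $Z_{r-1}$ is a cone over the second Veronese of $\PP^{n-r}$, so the ``small resolution of the node'' normal form does not apply. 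The paper avoids all of this by choosing $c=2n-1<\tfrac{rn}{2}=2n$ so that the general linear section $X$ misses the singular locus $\sigma^{-1}(\overline Z_{r-2,n})$ of $W_{r,n}$ entirely---$X$ is smooth and Fano outright---and then computing $H^3(X,\ZZ)$ via the GIT presentation $W_{r,n}^{\mathrm{sm}}\cong\bigl(\Hom(V,S)\setminus\tau^{-1}(CZ_{r-2,n})\bigr)/\GO(S)^\circ$, the equivariant cohomology of $\BGO(4)^\circ$, and the Goresky--MacPherson Lefschetz theorem. Finally, note a degree confusion: an \'etale double cover of $Y^\circ$ is classified by the discriminant class $w_1\in H^1(Y^\circ,\ZZ/2)$, not by a class in $H^2$; the class $w_2\in H^2$ is a separate object (pulled back from $\BSO(4)$) whose integral Bockstein supplies the torsion in $H^3$, and the two should not be conflated.
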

As a consequence, by \cite{Campana_1992, KMM}, the variety $X$ is rationally connected but not stably rational. We do not know if it is unirational.

The $X$ in the theorem is a complete intersection in a double cover of the space of rank $\le 4$ quadrics in $\mathbb P^{d/2+2}$.
The families of maximal linear subspaces of these quadrics give Brauer--Severi varieties over $X$, and via the isomorphism $\Br(X) \cong \Tors H^3(X,\ZZ)$, the associated Brauer class maps to the nonzero element in $H^3(X,\ZZ)$.
Our examples can be regarded as higher-dimensional analogues of the Artin--Mumford threefold from \cite{artinmumford}, whose construction is closely related to that of our $X$ (see Section \ref{isolatedsingcase}).

Starting in dimension 6, the Fano varieties we consider have a further exotic property.
\begin{theorem}\label{mainthm2}
When $d\ge 6$, the $d$-dimensional Fano variety $X$ from Theorem \ref{mainthm1} has the property that the coniveau and strong coniveau filtrations differ.
More precisely,
\begin{equation}\label{coniveaucontainment}
    0 = \widetilde N^1H^3(X,\ZZ) \subsetneqq N^1H^3(X,\ZZ) = H^3(X,\ZZ) \cong \ZZ/2.
\end{equation}
\end{theorem}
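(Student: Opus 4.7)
The plan is to prove the two identities $N^1 H^3(X, \ZZ) = H^3(X, \ZZ)$ and $\widetilde N^1 H^3(X, \ZZ) = 0$ separately; the containment $\widetilde N^1 \subseteq N^1$ is automatic.

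For the first equality, the nonzero element $\alpha \in H^3(X, \ZZ) \cong \ZZ/2$ is torsion, and on any smooth projective variety the torsion subgroup of $H^{2i+1}(-,\ZZ)$ lies in $N^i$, by the Bloch--Ogus spectral sequence combined with the Bloch--Kato / Merkurjev--Suslin theorem (or, geometrically, because the Brauer--Severi variety representing $\alpha$ from the discussion after Theorem \ref{mainthm1} is Zariski-locally trivial on the complement of some divisor, so $\alpha$ is supported on that divisor).

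For the second equality I would assume for contradiction that $\alpha = f_*\beta$ for some smooth projective $\widetilde Z$ of dimension $d - 1$, morphism $f\colon \widetilde Z \to X$, and class $\beta \in H^1(\widetilde Z, \ZZ)$, and derive a contradiction via Steenrod squares on the mod-$2$ reductions $\bar\alpha \in H^3(X,\ZZ/2)$ and $\bar\beta \in H^1(\widetilde Z,\ZZ/2)$. The key structural observation is that $\bar\beta$ lifts to an integral class, so its Bockstein vanishes, $Sq^1 \bar\beta = 0$; moreover $Sq^j \bar\beta = 0$ for all $j \geq 2$ by degree reasons. The Atiyah--Wu formula for Steenrod operations on pushforward then collapses to
\[
Sq^i \bar\alpha \;=\; f_*\bigl(v_i \cdot \bar\beta\bigr), \qquad i \geq 1,
\]
where $v_i \in H^i(\widetilde Z, \ZZ/2)$ is a polynomial in the Stiefel--Whitney classes of the virtual normal complex $T_{\widetilde Z} - f^*T_X$.

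The main obstacle is to derive a contradiction from these identities, and this is precisely where the hypothesis $d \geq 6$ enters: the natural invariant to examine is the cup-square $\bar\alpha \cup \bar\alpha = Sq^3 \bar\alpha \in H^6(X, \ZZ/2)$, which lives in an interior cohomology group only once $d \geq 6$. I would compute this class in $H^6(X,\ZZ/2)$ directly, exploiting the double-cover presentation of $X$ and Lefschetz-type restriction from the ambient linear section of the rank-locus of symmetric matrices, and aim to show that it is a specific nonzero class that cannot be written as $f_*(v_3 \cdot \bar\beta)$ for any choice of $(f, \bar\beta, v_3)$. The hardest step will be to rule out all such pushforward representations uniformly in $f$, which likely requires an auxiliary obstruction (for instance, detecting $\bar\alpha\cup\bar\alpha$ via an unramified invariant or via its restriction to a well-chosen subvariety of $X$ disjoint from the image of any candidate $f$) rather than a case-by-case Stiefel--Whitney computation.
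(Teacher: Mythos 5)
Your overall strategy is the same as the paper's: treat the two equalities separately, use rational connectedness (or the equivalent fact that torsion in $H^3$ has coniveau $\ge 1$) for the first, and use the mod-$2$ square of $\alpha$ as the obstruction for the second. You also correctly identify why $d \ge 6$ is needed — the class $\bar\alpha^2$ has degree $6$. But there are two concrete gaps in the second half, and you flag one of them yourself.

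The more important gap is in the logic of the obstruction. You set up the Wu-formula computation $Sq^3\bar\alpha = f_*(v_3\cup\bar\beta)$ and then propose to show that a specific nonzero class in $H^6(X,\ZZ/2)$ ``cannot be written as $f_*(v_3\cdot\bar\beta)$ for any choice of $(f,\bar\beta,v_3)$,'' ruling out all pushforward representations case by case. This is both harder than necessary and unlikely to close: you have no control over the geometry of candidate $f$'s, and a restriction-to-a-subvariety argument has no obvious reason to work. What you are missing is that the vanishing of $\bar\alpha^2$ for every $\alpha$ of strong coniveau $\ge 1$ is a \emph{general} topological theorem — it holds for any smooth projective $X$, uniformly in $f$ — and this is exactly \cite[Proposition~3.5]{Benoist_Ottem_2021}, which the paper simply cites (as Proposition~\ref{maintopologicalobstruction}). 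The contradiction is then immediate from $\bar\alpha^2 \neq 0$, with no case analysis on $f$ at all. If you want to reprove the obstruction rather than cite it, the correct next move after your Wu-formula reduction is to show that $v_3\cup\bar\beta$ (or rather the relevant pushforward) vanishes identically, using $Sq^1\bar\beta=0$ and the self-intersection/excess-intersection formula — not to try to compare against a target class.

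The second gap is that the nonvanishing $\bar\alpha^2 \neq 0 \in H^6(X,\ZZ/2)$ is itself a substantial computation. You say you would ``compute this class directly, exploiting the double-cover presentation,'' but do not carry it out. In the paper this is the content of Sections~\ref{symmetricasGIT}--\ref{cohomologycomputations}: one realizes $W_{4,n}^{\mathrm{sm}}$ as a free quotient $(\Hom(V,S)\setminus\tau^{-1}(CZ_{2,n}))/\GO(4)^\circ$, compares its low-degree cohomology to that of $\BGO(4)^\circ$ via the Leray--Serre spectral sequence and a codimension count (Lemmas~\ref{BGlemma1}--\ref{thm:cohomologyOfComplement}), identifies $\alpha$ with the pullback of $\nu=\beta_\ZZ(w_2)\in H^3(\BSO(4),\ZZ)$ whose mod-$2$ reduction is $w_3$ with $w_3^2\neq 0$, and finally restricts to the linear section $X$ using Goresky--MacPherson Lefschetz (Lemma~\ref{GMLefschez}); the inequality $c\le 4n-13$ in Corollary~\ref{thm:summaryOfHX} is what guarantees injectivity into $H^6$ and is exactly the source of the $d\ge 6$ hypothesis. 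Without some version of this chain, the claim $\bar\alpha^2\neq 0$ is unsubstantiated.

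Your argument for $N^1H^3(X,\ZZ)=H^3(X,\ZZ)$ is fine; it is a slight variant of the paper's. The paper invokes Bloch--Srinivas/Colliot-Thélène--Voisin for rationally connected varieties (Proposition~\ref{ctvRC}), while you use that torsion in $H^3$ is always of coniveau $\ge 1$ via Merkurjev--Suslin. Since $H^3(X,\ZZ)$ here is pure torsion, both routes give the same conclusion.
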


The two coniveau filtrations $\widetilde N^cH^l(X,\ZZ) \subseteq N^cH^l(X,\ZZ)$ of $H^l(X,\ZZ)$ were introduced in the paper \cite{Benoist_Ottem_2021}.
The subgroups of the filtrations contain the cohomology classes in $H^l(X,\ZZ)$ obtained via pushforward from smooth projective varieties (resp.~possibly singular projective varieties) of codimension at least $c$. In the case $c = 1$, $l = 3$, they are described as follows. 
The group $N^1H^3(X,\ZZ)$ consists of classes in $H^3(X,\ZZ)$ supported on some divisor of $X$. 
Its subgroup $\widetilde N^1H^3(X,\ZZ)$ consists of pushforwards  $f_*\beta$ of classes $\beta\in H^1(S,\ZZ)$ via proper maps $f:S\to X$ where $S$ is nonsingular of dimension $\dim X-1$.

An inequality of the two coniveau filtrations is particularly interesting for $c=1$ because for each $l \ge 0$, the quotient 
\begin{equation}\label{coniveauquotient}
N^1H^l(X,\ZZ)/\widetilde N^1H^l(X,\ZZ)
\end{equation}
is a stable birational invariant for smooth projective varieties \cite[Proposition~2.4]{Benoist_Ottem_2021}. 

While the examples of \cite{Benoist_Ottem_2021} show that this quotient can be non-zero in general, it is known to be zero for certain classes of varieties. Voisin \cite{voisinconiveauRC} proved that for a rationally connected threefold, any class in $H^3(X,\ZZ)$ modulo torsion lies in $\widetilde N^1H^3(X,\ZZ)$. 
Tian \cite[Theorem 1.23]{tian2022local} strenghtened this to show that $H^3(X,\ZZ)=\widetilde{N}^1H^3(X,\ZZ)$ for any rationally connected threefold. Theorem \ref{mainthm2} shows that the quotient \eqref{coniveauquotient} can be non-zero for rationally connected $X$ of higher dimension, answering a question of Voisin (see \cite[Question 3.1]{voisinconiveauRC} and \cite[Section 7.2]{Benoist_Ottem_2021}).



The paper is organised as follows. 
Section \ref{symmetricdeterminantal} begins with background on the geometry of symmetric determinantal loci and their double covers.
In Section \ref{symmetricasGIT}, we explain how these symmetric determinental loci and their double covers are GIT quotients of affine space by an action of an orthogonal similitude group. 
In Section \ref{mainexamplessection} (more specifically Definition \ref{defn:CompleteIntersectionX}), we define the main examples in Theorem \ref{mainthm1} as linear sections of the double covers of symmetric rank loci.

In Section \ref{cohomologycomputations}, we use the presentation of the double symmetric rank loci as GIT quotients to show that their smooth part has non-trivial torsion classes $\alpha\in H^3(X,\ZZ)$.
Taking a linear section and applying a generalised Lefschetz hyperplane theorem then proves Theorem \ref{mainthm1}, restated more precisely as Theorem \ref{thm:mainthm1PreciseVersion}.
In Section \ref{specialexamples}, we study some special examples appearing in our construction and compute their geometric invariants, in particular, the ``minimal'' example of a 4-dimensional Fano variety.

In the final Section \ref{coniveausection} we prove Theorem \ref{mainthm2}, restated precisely as Theorem \ref{thm:mainThm2PreciseVersion}.
The key point is that the mod 2 reduction of the generator $\alpha$ of $H^3(X,\ZZ)$ satisfies $\overline \alpha^2 \not= 0 \pmod 2$, which implies that $\alpha$ is not of strong coniveau 1 by a topological obstruction described in \cite{Benoist_Ottem_2021}.


We would like to thank N. Addington, O. Benoist, J. \Kollar, S. Schreieder, F. Suzuki and C. Voisin for useful discussions.
The work on this paper was begun at the Oberwolfach workshop \textit{Algebraic Geometry: Moduli Spaces, Birational Geometry and Derived Aspects} in the summer of 2022.
J.V.R.~is funded by the Research Council of Norway grant no.~302277.
\subsection{Notation}
We work over the complex numbers $\CC$. We use the notation for projective bundles where $\PP(\E)$ consists of lines in $\E$.

By a Fano variety we mean a nonsingular projective variety with ample anticanonical bundle.

\section{Symmetric determinantal loci and related varieties}\label{symmetricdeterminantal} Here we survey basic facts on symmetric determinal loci. Some of these are well known; we in particular follow the works of Hosono--Takagi \cite[Section 2]{hosonotakagi2015} and Tyurin \cite{tyurin1975intersections}. 

Let $V=\CC^{n}$. 
We identify $\PP(\Sym^2V^\vee)$ with the space of quadrics in $\PP(V)$ and let $Z_{r,n}\subset \PP(\Sym^2V^\vee)$ denote the subset of quadrics of rank $r$. 
$Z_{r,n}$ is a quasi-projective variety; its closure $\overline{Z}_{r,n}$ parameterizes the quadrics of rank $\le r$ and is defined by the vanishing of the $(r+1)\times (r+1)$-minors of a generic $n\times n$ symmetric matrix. These give a nested chain of subvarieties of $\PP(\Sym^2 V^\vee)$,
$$
\overline{Z}_{1,n}\subset \overline{Z}_{2,n}\subset \ldots \subset \overline{Z}_{n,n} = \PP(\Sym^2V^\vee),
$$where $\overline{Z}_{1,n}$ is the 2nd Veronese embedding of $\PP^{n-1}$, and $\overline{Z}_{n-1,n}$ is the degree $n$ hypersurface defined by the determinant.


\begin{proposition}
\label{thm:propertiesOfZ}
The variety $Z_{r,n}$ is irreducible of dimension
\begin{equation}\label{eq:dimZr}
\dim Z_{r,n} = rn - \tfrac12 r^2 + \tfrac 12 r - 1.
\end{equation}
The singular locus of $\overline{Z}_{r,n}$ is $\overline{Z}_{r-1,n}$, unless $r = 1$ or $r = n$, in which case $\overline Z_{r,n}$ is smooth.
\end{proposition}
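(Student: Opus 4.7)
The plan divides into two independent parts: a dimension-and-irreducibility calculation via an explicit desingularization, and a determination of the singular locus via a combination of group-theoretic smoothness and the Jacobian criterion.

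For the first part, the plan is to realise $\overline Z_{r,n}$ as the image of a smooth irreducible Grassmann bundle. Consider the incidence variety
\[
\widetilde Z_{r,n} = \{(K, [A]) \in \Gr(n-r, V) \times \PP(\Sym^2 V^\vee) : K \subseteq \ker A\}.
\]
Projection to $\Gr(n-r, V)$ exhibits $\widetilde Z_{r,n}$ as a projective bundle with fibre $\PP(\Sym^2(V/K)^\vee) \cong \PP^{\binom{r+1}{2}-1}$, so $\widetilde Z_{r,n}$ is smooth and irreducible of dimension $r(n-r) + \binom{r+1}{2} - 1 = rn - \tfrac12 r^2 + \tfrac12 r - 1$. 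The other projection to $\PP(\Sym^2 V^\vee)$ has image exactly $\overline Z_{r,n}$, since a $K \subseteq \ker A$ of the required dimension exists if and only if $\operatorname{rank}(A) \le r$; moreover for $[A] \in Z_{r,n}$ the kernel $K = \ker A$ is the unique preimage, so the projection is birational onto its image. This yields both irreducibility and the dimension formula.

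For the singular locus, one direction follows from the observation that $\GL(V)$ acts on $\PP(\Sym^2 V^\vee)$ by $g \cdot [A] = [gAg^T]$ and, by the classification of symmetric bilinear forms, the orbits are precisely the rank strata $Z_{r,n}$. Each $Z_{r,n}$ is therefore a homogeneous space, hence smooth. This settles the cases $r = 1$ (where $\overline Z_{1,n} = Z_{1,n}$ is the Veronese) and $r = n$ (where $\overline Z_{n,n} = \PP(\Sym^2 V^\vee)$), and yields the inclusion $\operatorname{Sing}(\overline Z_{r,n}) \subseteq \overline Z_{r-1,n}$.

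For the reverse inclusion when $2 \le r \le n-1$, the plan is to apply the Jacobian criterion to the affine cone of $\overline Z_{r,n}$, which is cut out by the $(r+1) \times (r+1)$ minors of the generic $n \times n$ symmetric matrix. The key observation is that the partial derivative of any such minor with respect to a matrix entry is, up to sign, an $r \times r$ minor of the same matrix. At a point $A$ of rank $s$ with $1 \le s \le r-1$, every $r \times r$ minor of $A$ vanishes, so the full Jacobian vanishes at $A$; hence the Zariski tangent space to the affine cone at $A$ is the entire ambient $\Sym^2 V^\vee$, which is strictly larger than the cone itself whenever $r < n$. Consequently $[A]$ is singular in $\overline Z_{r,n}$, proving $\overline Z_{r-1,n} \subseteq \operatorname{Sing}(\overline Z_{r,n})$. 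No step presents a genuine obstacle; the only things to keep straight are the $\GL(V)$-orbit identification and the passage between affine cone and projective variety.
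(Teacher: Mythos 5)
Your argument for the irreducibility and dimension statement is essentially the paper's: the incidence variety you write as $\{(K,[A]) : K \subseteq \ker A\}$ is the same as the paper's $\widetilde Z_{r,n}$ (since $\sing(Q) = \PP(\ker A)$), and both proofs identify it as a projective bundle over $\Gr(n-r,V)$ of the stated dimension and note that the second projection is birational onto $\overline Z_{r,n}$. For the singular locus the paper simply refers to Hosono--Takagi, so your explicit argument is a genuine addition. The $\GL(V)$-orbit observation, giving smoothness of each stratum and hence $\Sing(\overline Z_{r,n}) \subseteq \overline Z_{r-1,n}$, is clean and correct.

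The Jacobian-criterion step for the reverse inclusion has a gap, though. You use that the affine cone over $\overline Z_{r,n}$ is ``cut out'' by the $(r+1)\times(r+1)$ minors, but what your computation of the partial derivatives actually shows is that the Zariski tangent space of the \emph{scheme defined by those minors} is all of $\Sym^2 V^\vee$ at a point of rank $\le r-1$. That does not by itself bound the tangent space of the reduced variety $\overline Z_{r,n}$ from below: if the minors only cut out $\overline Z_{r,n}$ set-theoretically, the actual ideal could be larger and the true tangent space smaller, so nothing about singularity would follow (compare $V(x^2)\subset \AA^1$, whose scheme-theoretic tangent space at $0$ is $\AA^1$ while $V(x^2)_{\mathrm{red}}$ is smooth). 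To close the argument you need the classical theorem that the ideal of $(r+1)\times(r+1)$ minors of a generic symmetric matrix is already prime (Kutz; see also the standard-monomial/De Concini--Procesi treatments of determinantal rings), so that the minors generate the defining ideal and the Jacobian criterion applies to $\overline Z_{r,n}$ itself. If you prefer to avoid importing that fact, an alternative is the local computation carried out in the appendix of the paper: write the universal deformation of a rank-$s$ form ($s<r$) and read off that the local equation of $\overline Z_{r,n}$ is visibly singular, e.g.\ transversally of type $A_1$ along $Z_{r-1,n}$ for $s=r-1$.
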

This proposition can be checked using the incidence variety $\widetilde{Z}_{r,n}$ parameterizing $(n-r-1)$-planes contained in the singular loci of quadrics
$$
\widetilde {Z}_{r,n} = \bigl\{ ([L],Q) \, \big| \, \PP(L)\subset \sing(Q) \bigr\}\subset \Gr(n-r,V)\times \PP(\Sym^2V^\vee).
$$For $[L]\in \Gr(n-r,V)$, the fiber of the first projection $\pi_1$, can be identified with the space of quadrics in $\PP(V/L)\simeq \PP^{r-1}$, so $\pi_1$ is a $\PP^{r(r+1)/2-1}$-bundle over $\Gr(n-r,V)$. 
It follows that $\widetilde{Z}_{r,n}$ is nonsingular, and its dimension is given by \eqref{eq:dimZr}. Moreover, it is straightforward to check that the second projection gives a desingularization $\pi_2:\widetilde{Z}_{r,n}\to Z_{r,n}$. For the claim about the singular locus, see \cite[Section 2]{hosonotakagi2015}.


\begin{example}\label{ex:Vdim5}
For $n=5$, $\PP(\Sym^2V^\vee)\simeq \PP^{14}$, and there are 4 closed rank loci:
\begin{itemize}
\item $\overline{Z}_{4,5}$ is a quintic hypersurface defined by the determinant of the generic $5\times 5$-symmetric matrix.
\item $\overline{Z}_{3,5}$ is a codimension 3 subvariety of degree 20.
\item $\overline{Z}_{2,5}\simeq\Sym^2\PP^4$ is a codimension 6 subvariety of degree 35.
\item $\overline{Z}_{1,5}$ is the 2nd Veronese embedding of $\PP^4$ in $\PP^{14}$; it is a fourfold of degree $16$.
\end{itemize}
\end{example}

\subsection{Double covers}\label{sec:doublecovers}
We will only be interested in the case when the rank $r$ is even. In this case, we can define a double cover 
$$\sigma:W_{r,n}\langto  \overline{Z}_{r,n}$$
which is ramified exactly over the locus $\overline{Z}_{r-1,n}$, of codimension $n-r+1$ in $\overline{Z}_{r,n}$.
The construction is based on the classical fact  that for a quadric $Q$ of rank $r$ in $n$ variables, the variety of $(n-r/2-1)$-planes in $Q\subset \PP^{n-1}$ is isomorphic to the orthogonal Grassmannian $\OG(r/2,r)$, which has two connected components. 

The formal construction of $W_{r,n}$ from this observation starts with the incidence variety
\begin{equation}
\label{eqn:DefinitionOfIncidenceVariety}
U_{r,n}=\biggl\{(L,Q) \,\,|\,\, Q\in \overline{Z}_{r,n} \text{ and }\PP(L)\subset Q\biggr\}\subset \Gr(n-r/2,V)\times \overline{Z}_{r,n}.
\end{equation}
Taking the Stein factorisation of the projection $U_{r,n} \to \overline{Z}_{r,n}$ we get a new variety $W_{r,n}$ and morphisms 
\begin{equation}\label{defineUandW}
\eta: U_{r,n}\to W_{r,n} \text{ and } \sigma:W_{r,n}\to \overline{Z}_{r,n},
\end{equation}
where $\eta$ has connected fibres and $\sigma$ is finite.
The fibre of $\eta$ at a general point of $W_{r,n}$ is isomorphic to a connected component of $\OG(r/2,r)$.
The morphism $\sigma$ is a double cover, ramified exactly along $\overline{Z}_{r-1,n}$  (see \cite[Proposition 2.3]{hosonotakagi2015}).

For the remainder of the paper, we will let 
\[
H=\sigma^*\mathcal O_{\overline{Z}_{r,n}}(1)
\] be
the pullback of the polarization from $\PP(\Sym^2 V^\vee)$.
The basic geometric properties of $W_{r,n}$ are as follows.
\begin{proposition}\label{propertiesofWr}
$W_{r,n}$ has Gorenstein canonical singularities contained in $\sigma^{-1}(\overline{Z}_{r-2,n})$.
     It has Picard number 1 and its anticanonical divisor is 
    \begin{equation}\label{KW}
-K_{W_{r,n}}=\frac{rn}{2} H.
    \end{equation}In particular, $W_{r,n}$ is a singular Fano variety.

\end{proposition}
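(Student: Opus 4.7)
The plan is to analyse $W_{r,n}$ via a local analytic normal form near each rank stratum of $\overline Z_{r,n}$, together with the GIT presentation of $W_{r,n}$ promised in Section~\ref{symmetricasGIT}.

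The main technical ingredient, which I would establish first, is the following local model. By a standard Schur-complement / versality argument for symmetric matrices, at a quadric $Q_0 \in \overline Z_{r,n}$ of rank $r-s$ (with $0 \le s \le r$) there is an analytic isomorphism
\[
(\overline Z_{r,n}, Q_0) \;\cong\; (\CC^{N}, 0) \times (T_{s, k}, 0), \qquad k = n - r + s,
\]
where $T_{s,k} \subset \Sym^2(\CC^k)^\vee$ is the variety of symmetric $k\times k$ matrices of rank $\le s$, written as $\operatorname{Mat}(k \times s) \gq \Ogroup(s)$ via $P \mapsto P M_0 P^{T}$ for a fixed nondegenerate form $M_0$ on $\CC^s$. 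Correspondingly, the germ of $W_{r,n}$ at $\sigma^{-1}(Q_0)$ is $(\CC^{N}, 0) \times (\widetilde T_{s, k}, \cdot)$ with $\widetilde T_{s, k} := \operatorname{Mat}(k \times s) \gq \SO(s)$; the identification of the double cover with the $\SO$-quotient reflects the fact that the two components of $\OG(r/2, r)$ coalesce as the rank of $Q$ drops. This step is the main obstacle and requires careful bookkeeping of the spin structure.

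With the normal form in hand, the singularity claim reduces to a universal problem depending only on $(s, k)$. For $s = 0$ the base is smooth and $\sigma$ is étale. For $s = 1$ the group $\SO(1)$ is trivial and $\widetilde T_{1, k} = \CC^k$ is smooth, so $W_{r,n}$ is smooth over $Z_{r-1, n}$; this already gives the containment $\Sing W_{r,n} \subseteq \sigma^{-1}(\overline Z_{r-2, n})$. For $s \ge 2$ the germ $\widetilde T_{s, k}$ is genuinely singular and I would check Gorenstein canonicity: for $s = 2$, diagonalising the $\SO(2) = \gm$-action on $\operatorname{Mat}(k \times 2) \cong \CC^{2k}$ with weights $(+1, -1)$ exhibits $\widetilde T_{2, k}$ as the affine cone over the Segre $\PP^{k-1} \times \PP^{k-1}$; this is Gorenstein canonical since $-K_{\PP^{k-1}\times\PP^{k-1}}$ is $k$ times the Segre polarisation. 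For $s \ge 3$ the same properties follow either from constructing a small crepant partial resolution via $U_{r,n}$ or via a direct GIT / Luna-slice analysis.

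For the Picard number and anticanonical class I would then appeal to the GIT presentation of Section~\ref{symmetricasGIT}. The acting similitude group is connected reductive, and after quotienting by the generic stabilizer its character group is $\ZZ$, with generator corresponding to $H$; this gives $\Pic W_{r,n} = \ZZ \cdot H$. For the canonical class, the total space $\Hom(V, \CC^r) \cong \CC^{rn}$ of the principal bundle has trivial canonical class, and transferring to $W_{r,n}^\circ$ via the adjunction formula for the principal bundle produces a character whose weight relative to $H$ is $-rn/2$, yielding $-K_{W_{r,n}^\circ} = \tfrac{rn}{2}\, H$ on the smooth locus. Because $W_{r,n}$ is Gorenstein by the previous step and $\sigma^{-1}(\overline Z_{r-2, n})$ has codimension at least two, this identity extends to all of $W_{r,n}$. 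Ampleness of $H$ then makes $-K_{W_{r,n}}$ ample, so $W_{r,n}$ is a singular Fano variety.
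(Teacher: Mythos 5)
The paper's proof of this proposition is a one-line citation to Hosono--Takagi \cite[Proposition 2.5]{hosonotakagi2015}, so your self-contained argument necessarily takes a different route. The overall strategy --- reduce to a transverse-slice normal form $\widetilde T_{s,k}=\operatorname{Mat}(k\times s)\gq\SO(s)$ via Schur complements and a Luna-slice argument, then read off $\Pic$ and $K_{W_{r,n}}$ from the GIT presentation --- is sound and consistent with what the paper itself does in Proposition~\ref{thm:localStructureOfW} (which is exactly your $s=2$ case: the cone over $\PP^{n-r+1}\times\PP^{n-r+1}$). Your Picard and anticanonical computations are correct: the character lattice of $\GO(S)^\circ$ is $\ZZ\cdot\chi$ for $r\ge 3$, the complement of the free locus in $\Hom(V,S)$ has codimension $\ge 2$, and $\det(V^\vee\otimes S)=\chi^{rn/2}$ as a $\GO(S)^\circ$-representation, giving $-K_{W_{r,n}}=\tfrac{rn}{2}H$.

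The genuine gap is your treatment of $s\ge 3$, which is indispensable even in the paper's main case $r=4$ (where points over $Z_{1,n}$ have $s=3$). Saying that ``the same properties follow either from constructing a small crepant partial resolution via $U_{r,n}$ or via a direct GIT / Luna-slice analysis'' is not an argument, and the first option as stated is actually wrong: the morphism $\eta\colon U_{r,n}\to W_{r,n}$ from the Stein factorisation is not birational --- its generic fibre is a connected component of $\OG(r/2,r)$, of dimension $\binom{r/2}{2}>0$ for $r\ge 4$ (a $\PP^1$ when $r=4$) --- so it is a fibration, not a small resolution. What one actually needs here is either an explicit crepant resolution of the slice model $\operatorname{Mat}(k\times s)\gq\SO(s)$ for all $s\ge 2$, or an appeal to a general result on singularities of good quotients by connected reductive groups; neither is supplied, and the Gorenstein/canonicity claim on the deeper strata is exactly the hard content that Hosono--Takagi prove and the paper delegates to them. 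Until this is filled in, the proof establishes the statement only on the complement of $\sigma^{-1}(\overline Z_{r-3,n})$.
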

\begin{proof}
    See \cite[Proposition 2.5]{hosonotakagi2015}. 
\end{proof}

\begin{example}In the setting of Example \ref{ex:Vdim5}, $-K_{W_{4,5}}=10H$ follows because $Z_{4,5}\subset \PP^{14}$ is a quintic hypersurface and $\sigma$ is étale over $Z_{4,5}-Z_{2,5}$.
\end{example}


\subsection{(Double) symmetric determinantal loci as GIT quotients}\label{symmetricasGIT}
In this section, we explain how the varieties $\overline Z_{r,n}$ and $W_{r,n}$ can be presented as GIT quotients of affine spaces, which is a key ingredient in the cohomology computations needed in Theorems \ref{mainthm1} and \ref{mainthm2}.

Let $r$ be even, let $S = \CC^r$, and let $\omega_S \in \Sym^2 S^\vee$ be a nondegenerate quadratic form.
The {\em orthogonal similitude group} $\GO(S) \subset \GL(S)$ consists of the linear automorphisms of $S$ which preserve $\omega_S$ up to scaling.\footnote{The group $\GO(S)$ could more properly be denoted $\GO(S,\omega_S)$, but since the choice of $\omega_S$ does not matter, we omit it from the notation.}
In other words, an invertible linear map $\phi \colon S \to S$ lies in $\GO(S)$ if there exists a $\chi(\phi) \in \CC^*$ such that for all $v \in S$,
\[
\chi(\phi)\omega_S(v,v) = \omega_S(\phi(v),\phi(v)).
\]
The map $\chi \colon \GO(S) \to \CC^*$ defined by this relation is a group homomorphism, and 
we have an exact sequence
\[
1 \langto \Ogroup(S) \langto \GO(S) \overset{\chi}{\langto} \mathbb C^* \langto 1.
\]
The group $\GO(S)$ naturally acts on the orthogonal Grassmannian $\OG(r/2,S)$.
The variety $\OG(r/2,S)$ has two connected components, and the action of $\GO(S)$ on this two-element set gives an exact sequence
\[
1 \langto \GO(S)^\circ \langto \GO(S) \langto \mu_2 \langto 1
\]
where $\GO(S)^\circ$ is connected.
We further have $\SO(S) = \Ogroup(S) \cap \GO(S)^\circ$, and an exact sequence
\[
1 \langto \SO(S) \langto \GO(S)^\circ \overset{\chi}{\langto} \CC^* \langto 1.
\]

Consider now the affine space $\Hom(V,S) \simeq \mathbb A^{rn}$.
The group $\GO(S)$ acts on $\Hom(V,S)$ via
\begin{gather*}
\GO(S) \times \Hom(V,S) \to \Hom(V,S) \\
(\phi, f) \mapsto \phi \circ f.
\end{gather*}
We have a morphism of affine spaces
\[
\tau \colon \Hom(V,S) \to \Sym^2 V^\vee,
\]
defined by, for any $f \in \Hom(V,S)$ and $v,w \in V$,
\[
\tau(f)(v,w) = f^*\omega_S(v,w) = \omega_S(f(v), f(w)).
\]
Let $CZ_{r,n} \subset \Sym^2 V^\vee$ be the subset of $\Sym^2 V^\vee$ corresponding to quadratic forms of rank $r$, so that $Z_{r,n} = CZ_{r,n}/\CC^*$.
The set $\tau^{-1}(CZ_{r,n}) \subset \Hom(V,S)$ consists of the $f \colon V \to S$ such that $f^*\omega_S$ has rank $r$.
\begin{lemma}
\label{thm:freeAction}
    The group $\GO(S)$ acts freely on $\tau^{-1}(CZ_{r,n})$.
\end{lemma}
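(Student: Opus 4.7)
The plan is to reduce the statement to the observation that any $f\in\tau^{-1}(CZ_{r,n})$ is necessarily surjective, after which freeness of the $\GO(S)$-action is immediate. The stabiliser of $f$ in $\GO(S)$ consists of those $\phi$ with $\phi\circ f = f$; if $f$ is surjective then every $s\in S$ can be written as $s=f(v)$, and applying $\phi\circ f=f$ to such $v$ forces $\phi(s)=s$ for all $s$, hence $\phi=\id_S$.

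The substantive step, then, is to show that $f\in\tau^{-1}(CZ_{r,n})$ implies $f$ is surjective. Equivalently, I would show the contrapositive: if $T:=\im(f)\subsetneq S$, then $\tau(f)=f^*\omega_S$ has rank strictly less than $r=\dim S$. The kernel of the bilinear form $\tau(f)$ on $V$ is the set of $v\in V$ such that $\omega_S(f(v),f(w))=0$ for every $w\in V$, i.e. such that $f(v)\in T^{\perp}$ (the orthogonal with respect to $\omega_S$); since automatically $f(v)\in T$, this kernel equals $f^{-1}(T\cap T^\perp)$. Therefore
\[
\rank(\tau(f)) = \dim V - \dim\ker(f) - \dim(T\cap T^\perp) = \dim T - \dim(T\cap T^\perp) \le \dim T < r,
\]
which contradicts $f\in\tau^{-1}(CZ_{r,n})$. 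Hence $f$ must be surjective.

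Combining the two steps yields the lemma. There is no genuine obstacle here beyond being careful with the rank computation: the rank of a bilinear form coming from composition with a linear map $f$ is controlled by both $\ker(f)$ and the degeneracy of the form on $\im(f)$, and one simply needs to track both contributions. Note that the argument uses nondegeneracy of $\omega_S$ only implicitly, through the fact that $T\cap T^\perp\subsetneq T$ when $T\subsetneq S$ might still occur; what actually matters is just the bound $\dim T<r$, which is the decisive inequality.
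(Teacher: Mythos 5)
Your proposal is correct and matches the paper's argument, which simply observes that any $f\in\tau^{-1}(CZ_{r,n})$ is surjective (since $\rank f^*\omega_S\le\dim\im(f)$) and that surjectivity of $f$ forces the stabiliser to be trivial. You have merely supplied the rank computation that the paper leaves implicit.
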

\begin{proof}
If $f \in \tau^{-1}(CZ_{r,n})$, then $f$ is surjective, so no element of $\GO(S)$ fixes $f$.
\end{proof}
\begin{lemma}
The group $\GO(S)^\circ$ acts freely on $\tau^{-1}(CZ_{r,n} \cup CZ_{r-1,n})$.
\end{lemma}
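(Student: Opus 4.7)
The plan is to reduce to the previous lemma on the stratum $\tau^{-1}(CZ_{r,n})$ and handle the new stratum $\tau^{-1}(CZ_{r-1,n})$ by a direct linear-algebra argument. Suppose $f \in \tau^{-1}(CZ_{r,n} \cup CZ_{r-1,n})$ and $\phi \in \GO(S)^\circ$ satisfies $\phi \circ f = f$; the goal is to show $\phi = \id$. Set $W = \im(f) \subseteq S$. The equation $\phi\circ f=f$ immediately gives $\phi|_W = \id_W$. Since $f$ factors as a surjection $V \twoheadrightarrow W$ followed by an inclusion $W\hookrightarrow S$, the rank of $\tau(f)=f^*\omega_S$ equals the rank of $\omega_S|_W$, which in turn is at most $\dim W$. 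Hence $\dim W \in \{r-1,r\}$.

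If $\dim W = r$, then $f$ is surjective and the previous lemma applies, giving $\phi = \id$. So the main case is $\dim W = r-1$, where $\omega_S|_W$ has rank $r-1=\dim W$, i.e.\ it is nondegenerate. Then $S = W \oplus W^\perp$ orthogonally, with $W^\perp$ a nondegenerate line. Since $\omega_S|_W$ is a nondegenerate form on a space of dimension $r-1\ge 1$, there exists $v \in W$ with $\omega_S(v,v) \neq 0$. Applying the similitude relation to $v$ and using $\phi v=v$ gives
\[
\omega_S(v,v) \;=\; \omega_S(\phi v,\phi v) \;=\; \chi(\phi)\,\omega_S(v,v),
\]
so $\chi(\phi) = 1$. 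Thus $\phi \in \Ogroup(S)\cap \GO(S)^\circ = \SO(S)$, using the exact sequence displayed just before the statement.

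Finally, $\phi \in \SO(S)$ fixes $W$ pointwise and hence preserves $W^\perp$, so $\phi|_{W^\perp}=\pm 1$. The constraint $\det(\phi) = 1$ forces $\phi|_{W^\perp} = 1$, so $\phi = \id$. (Equivalently, the only nontrivial candidate for $\phi$ is the reflection across $W$, which has determinant $-1$ and therefore lies in $\GO(S)\setminus\GO(S)^\circ$.) I do not expect any substantive obstacle here: the whole argument is a short case analysis, and the only point that requires care is verifying that passing from $\Ogroup(S)$ to $\GO(S)^\circ$ rules out the reflection in $W$, which is precisely what the identification $\GO(S)^\circ\cap \Ogroup(S)=\SO(S)$ achieves.
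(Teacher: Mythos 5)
Your proof is correct and follows essentially the same route as the paper's. Both arguments come down to the same three observations: $\phi$ fixes the image of $f$ pointwise (a nondegenerate hyperplane of $S$), this forces $\chi(\phi)=1$ so that $\phi\in\Ogroup(S)$, and then $\phi$ can only act on the orthogonal complement line by $\pm 1$, with $-1$ excluded by the constraint $\phi\in\GO(S)^\circ$, i.e.\ $\phi\in\SO(S)$. The paper carries this out by choosing an explicit orthonormal basis $f(v_1),\dots,f(v_{r-1}),e$ of $S$ and concluding $\phi(e)=\pm e$; you phrase it invariantly via $W=\im(f)$ and $W^\perp$, and you make the step $\chi(\phi)=1$ explicit, whereas the paper leaves it implicit when it writes $\omega_S(\phi(e),\phi(e))=\omega_S(e,e)$. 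These are presentational rather than substantive differences.
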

\begin{proof}
The previous lemma shows that $\GO(S)^\circ$ acts freely on $\tau^{-1}(CZ_{r,n})$.
So let $f \in \tau^{-1}(CZ_{r-1,n})$, and let $\phi \in \GO(S)$ be an element which fixes $f$.
We will show that $\phi$ is the identity.

Since $f^*\omega_S$ has rank $r-1$, we may find a basis $v_1,\ldots, v_n$ of $V$ such that 
\[
f^*(\omega_S)(v_i,v_j) = \omega_S(f(v_i),f(v_j)) = \begin{cases}1 \text{ if } 1 \le i = j \le r-1 \\ 0 \text{ otherwise.} \end{cases}
\]
The elements $f(v_1),\ldots, f(v_{r-1}) \in S$ are orthonormal, so we can choose a vector $e \in S$ such that $f(v_1),\ldots, f(v_{r-1}), e$ is an orthonormal basis for $S$.
Since $\phi$ fixes $f$, we have $\phi(f(v_i)) = f(v_i)$ for $1 \le i \le r-1$.
We have
\[
\omega_S(\phi(e),\phi(e)) = \omega_S(e,e).
\]
and, for each $i$,
\[
\omega_S(\phi(f(v_i)), \phi(e)) = \omega_S(f(v_i), e).
\]
This implies that $\phi(e) = \pm e$, and then the fact that $\phi \in \GO(S)^\circ$ forces $\phi(e) = e$.
This means that $\phi$ is the identity element.
\end{proof}

\begin{lemma}\label{BGlemma2}
The codimension of $\tau^{-1}(CZ_{r-2,n})$ in $\Hom(V,S)$ equals $n-r+2$.
\end{lemma}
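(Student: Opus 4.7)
The plan is to stratify $\tau^{-1}(CZ_{r-2,n})$ by the rank of $f \in \Hom(V,S)$, compute the dimension of each stratum, and take the minimum codimension.

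First I would use the surjection $V \twoheadrightarrow f(V)$ to rewrite $\rank \tau(f) = \rank(\omega_S|_{f(V)})$. Setting $W = f(V)$ of dimension $k$, the rank of $\omega_S|_W$ equals $k - \dim(W \cap W^\perp)$, and since $\omega_S$ is nondegenerate on $S$, this quantity lies between $\max(0, 2k-r)$ and $k$. The equation $\rank \tau(f) = r-2$ therefore forces $k \in \{r-2, r-1\}$ and splits $\tau^{-1}(CZ_{r-2,n})$ into two locally closed strata: the stratum $\mathcal A$ where $\rank f = r-1$ and $f(V)^\perp \subset f(V)$ (equivalently, $f(V)$ is a hyperplane tangent to the quadric $\{\omega_S = 0\} \subset \PP(S)$); and the stratum $\mathcal B$ where $\rank f = r-2$ and $\omega_S|_{f(V)}$ is nondegenerate.

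Next I would bound the dimension of each stratum by parametrising over $W = f(V)$. For $\mathcal A$, the image in $\Gr(r-1, S) \cong \PP^{r-1}$ is the tangent-hyperplane locus; via the duality $W \leftrightarrow W^\perp$ this corresponds to the quadric $\{[e] \in \PP(S) : \omega_S(e,e) = 0\}$, of dimension $r-2$. Each fibre is the open dense subset of $\Hom(V,W)$ consisting of surjections $V \twoheadrightarrow W$, of dimension $(r-1)n$. This gives $\dim \mathcal A = (r-1)n + (r-2)$ and hence $\codim \mathcal A = n-r+2$ in $\Hom(V,S)$. A parallel count for $\mathcal B$, where $W$ runs through the open subset of $\Gr(r-2, S)$ on which $\omega_S|_W$ is nondegenerate (dimension $2(r-2)$), gives $\codim \mathcal B = 2(n-r+2)$.

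Since the codimension of a reducible subvariety is the minimum of the codimensions of its irreducible components, and $\mathcal A$ has strictly smaller codimension than $\mathcal B$ in the relevant range $n \ge r$, the lemma follows. I do not expect a serious obstacle here: the only thing to check carefully is that the list $\{r-2, r-1\}$ for $\rank f$ is exhaustive, which is immediate from the displayed bounds on $\rank(\omega_S|_W)$; and the irreducibility of $\mathcal A$ (so that its codimension is well-defined as a single number) reduces to irreducibility of the quadric in $\PP(S)$, which is valid for $r \ge 3$.
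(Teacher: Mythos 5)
Your proposal is correct and takes essentially the same approach as the paper: stratify $\tau^{-1}(CZ_{r-2,n})$ by the rank of $f$, observe that the two strata are (rank $r-1$ with $\PP(f(V))$ tangent to the quadric) and (rank $r-2$ with $\omega_S|_{f(V)}$ nondegenerate), and compute the codimensions to be $n-r+2$ and $2(n-r+2)$ respectively. You spell out the bounds on $\rank(\omega_S|_{f(V)})$ and the irreducibility of the first stratum more explicitly than the paper does, but the argument is the same.
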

\begin{proof}
Let $f \in \Hom(V,S)$.
The rank of $f^*\omega_S$ equals the rank of $\omega_S|_{f(V)}$.
Thus if $f^*\omega_S$ has rank $r-2$ we either have that $f(V)$ has dimension $r-2$ or that $f(V)$ has dimension $r-1$ and $\PP(f(V))$ is tangent to the quadric $V(\omega_S) \subset \PP(S)$.
The set of maps $f$ of rank $r-2$ has codimension $2(n-r+2)$, while the set of $f$ with rank $r-1$ has codimension $n-r+1$.
The further requirement that $\PP(f(V))$ is tangent to the quadric gives codimension $n-r+2$.
\end{proof}

The character $\chi$ of $\GO(S)$ induces a $\GO(S)$-linearisation of $\OO_{\Hom(V,S)}$ such that $x \in H^0(\Hom(V,S),\OO_{\Hom(V,S)})$ is $\GO(S)$-invariant if and only if for all $\phi \in \GO(S)$ and $f \in \Hom(V,S)$ we have
\[
x(\phi f) = \chi(\phi) x(f).
\]
Let $\Hom(V,S)^{ss} \subset \Hom(V,S)$ denote the associated GIT semistable locus and let $\Hom(V,S)^{us} = \Hom(V,S) \setminus \Hom(V,S)^{ss}$.
\begin{lemma}
    We have 
    \[
    \Hom(V,S)^{us} = \tau^{-1}(0),
    \]
    and an isomorphism
    \[
    \Hom(V,S)^{ss}\sslash \GO(S) \simeq \overline Z_{r,n}.
    \]
\end{lemma}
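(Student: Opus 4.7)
The plan is to identify the graded ring of $\chi^m$-equivariant polynomial functions on $\Hom(V,S)$ with the homogeneous coordinate ring of $\overline Z_{r,n}$, and then read off both assertions from this identification.

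I would first consider the graded ring
\[
R = \bigoplus_{m \geq 0} R_m, \qquad R_m = H^0(\Hom(V,S),\OO)^{\GO(S), \chi^m},
\]
where $R_m$ is the space of polynomials $x$ satisfying $x(\phi f) = \chi(\phi)^m x(f)$ for all $\phi \in \GO(S)$. Restricting the equivariance condition to $\Ogroup(S) \subset \GO(S)$ (on which $\chi$ is trivial) forces $R_m$ to consist of $\Ogroup(S)$-invariants. Restricting to the central $\CC^* \cdot \id_S \subset \GO(S)$, on which $\chi(\lambda \cdot \id_S) = \lambda^2$, then forces each $x \in R_m$ to be homogeneous of degree $2m$ in $f$.

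Next I would invoke the First Fundamental Theorem of invariant theory for the orthogonal group: the ring $\CC[\Hom(V,S)]^{\Ogroup(S)}$ is generated by the quadratic functions $f \mapsto \omega_S(f(v),f(w))$ for $v,w \in V$, which are exactly the pullbacks under $\tau^*$ of linear polynomial functions on $\Sym^2 V^\vee$. These generators have $\chi$-weight $1$ and degree $2$ in $f$, so they span $R_1$; in particular all $\Ogroup(S)$-invariants have even degree in $f$, and $R$ is generated as a $\CC$-algebra by $R_1$. Hence $R = \tau^*\CC[\Sym^2 V^\vee]$. The set-theoretic image of $\tau$ is the affine cone $\overline{CZ}_{r,n}$ over $\overline Z_{r,n}$, since a quadratic form on $V$ has rank $\leq r$ iff it factors through a quotient of dimension $\leq r$. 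By the Second Fundamental Theorem for $\Ogroup(S)$, the kernel of $\tau^* \colon \CC[\Sym^2 V^\vee] \twoheadrightarrow R$ is generated by the $(r+1)\times(r+1)$ minors of a generic symmetric matrix, which is precisely the ideal of $\overline{CZ}_{r,n}$. Therefore $R \cong \CC[\overline{CZ}_{r,n}]$ as graded rings, with the $\chi$-weight grading matching the standard grading on the cone.

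Both conclusions then follow formally. By standard GIT,
\[
\Hom(V,S)^{ss}\sslash \GO(S) = \Proj(R) = \Proj(\CC[\overline{CZ}_{r,n}]) = \overline Z_{r,n},
\]
and a point $f$ is unstable iff every element of $\bigoplus_{m > 0} R_m$ vanishes at $f$, iff (since $R$ is generated by $R_1$) every element of $R_1$ vanishes at $f$, iff $\tau(f) = 0$, yielding $\Hom(V,S)^{us} = \tau^{-1}(0)$. The main nontrivial step is the identification $R \cong \CC[\overline{CZ}_{r,n}]$, which reduces to the classical First and Second Fundamental Theorems for $\Ogroup(S)$; everything else is a formal consequence of the definitions.
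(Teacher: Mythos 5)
Your proof is correct and follows essentially the same route as the paper's: both use the First Fundamental Theorem for $\Ogroup(S)$ to identify the invariant ring with the image of $\tau^*$, graded by $\chi$, and deduce both assertions from that. The only cosmetic difference is that you also invoke the Second Fundamental Theorem to pin down $\ker\tau^*$ as the ideal of $(r+1)\times(r+1)$ minors, whereas the paper, having established that $\tau$ induces a closed embedding of the $\Proj$ into $\PP(\Sym^2 V^\vee)$, leaves the identification of the image with $\overline Z_{r,n}$ as ``easy to see.''
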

\begin{proof}
Let $R$ be the coordinate ring of $\Hom(V,S)$.
The GIT quotient $\Hom(V,S)^{ss}\sslash \GO(S)$ is given by $\Proj R^{\Ogroup(S)}$, where the ring $R^{\Ogroup(S)}$ is graded by the action of $\GO(S)$, an action which factors through $\chi \colon \GO(S) \to \CC^*$.
Any linear function $x$ on $\Sym^2 V^\vee$ defines an element $\tau^*(x) \in R^{\Ogroup(S)}$, and the first fundamental theorem of invariant theory for orthogonal groups says that these $\tau^*(x)$ generate $R^{\Ogroup(S)}$ \cite[p.~390]{procesi_lie_2007}.
This shows that $\Hom(V,S)^{us} = \tau^{-1}(0)$, and moreover that $\tau$ gives a closed embedding $\Hom(V,S)^{ss}\sslash \GO(S) \to \PP(\Sym^2V^\vee)$.
It is easy to see that its image is $\overline Z_{r,n}$.
\end{proof}

Thinking of $\chi$ as a character of $\GO(S)^\circ$, we get a $\GO(S)^\circ$-linearisation of $\OO_{\Hom(V,S)}$.
The associated GIT semistable locus in $\Hom(V,S)$ is the same as for the $GO(S)$-linearisation, since $\GO(S)^\circ$ has finite index in $\GO(S)$.
\begin{lemma}\label{WisGITquotient}
The GIT quotient $\Hom(V,S)^{ss}\sslash \GO(S)^\circ$ is isomorphic to $W_{r,n}$.
\end{lemma}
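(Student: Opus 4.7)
The plan is to identify $Y := \Hom(V,S)^{ss}\sslash \GO(S)^\circ$ with $W_{r,n}$ by constructing a natural morphism $\psi\colon U_{r,n} \to Y$ over $\overline{Z}_{r,n}$ and then invoking the universal property of the Stein factorization defining $W_{r,n}$. Observe first that the natural map $q\colon Y \to \overline{Z}_{r,n}$ is a finite morphism of degree $2$, since $\GO(S)^\circ \subset \GO(S)$ has index $2$, and that $Y$ is normal and irreducible as a GIT quotient of a smooth affine variety by a connected reductive group.

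The heart of the argument is the construction of $\psi$ over the open locus $Z_{r,n}$ via an associated bundle construction. Using Lemma~\ref{thm:freeAction} together with Witt's theorem (which yields transitivity of the $\GO(S)$-action on each fibre) and the surjectivity of $\chi$, the morphism $\tau^{-1}(CZ_{r,n}) \to Z_{r,n}$ is a principal $\GO(S)$-bundle. Under this torsor structure, $U_{r,n}|_{Z_{r,n}}$ is isomorphic to the associated bundle $\tau^{-1}(CZ_{r,n}) \times^{\GO(S)} \OG(r/2, S)$ via $[(f, L')] \mapsto (f^{-1}(L'), [\tau(f)])$ (using that $\ker f = \ker Q$ whenever $\tau(f) \in \CC^* Q$), while $Y|_{Z_{r,n}}$ is isomorphic to $\tau^{-1}(CZ_{r,n}) \times^{\GO(S)} \pi_0(\OG(r/2, S))$. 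The natural $\GO(S)$-equivariant component map $\OG(r/2, S) \to \pi_0(\OG(r/2, S))$ then induces the desired morphism $\psi|_{Z_{r,n}}\colon U_{r,n}|_{Z_{r,n}} \to Y|_{Z_{r,n}}$. To extend $\psi$ to all of $U_{r,n}$, I take the closure $\overline{\Gamma}$ of the graph in $U_{r,n} \times_{\overline{Z}_{r,n}} Y$; since $q$ is finite and $\pi_2\colon U_{r,n} \to \overline{Z}_{r,n}$ is projective, $\overline{\Gamma} \to U_{r,n}$ is finite and birational, hence (working on the normalisation of $U_{r,n}$ if needed) an isomorphism, yielding $\psi\colon U_{r,n} \to Y$.

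To conclude, applying the universal property of the Stein factorization of $\pi_2$ to $\psi\colon U_{r,n} \to Y$ yields a unique factorisation through a finite morphism $W_{r,n} \to Y$ over $\overline{Z}_{r,n}$. Over the dense open $Z_{r,n}$, the two fibres of $\psi$ over a given point of $Y|_{Z_{r,n}}$ correspond to the two components of $\OG(r/2, S)$, so $W_{r,n} \to Y$ is bijective on fibres and hence birational; since $Y$ is normal and the map is finite birational, it is an isomorphism.

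The main anticipated difficulty is verifying that the component map really gives a bijection between the two $\GO(S)^\circ$-orbits in a fibre of $\tau^{-1}(CZ_{r,n}) \to Z_{r,n}$ and the two connected components of $\OG(r/2, S)$. This reduces to the standard fact that $\SO(S) \subset \GO(S)^\circ$ acts transitively on each component of $\OG(r/2, S)$, while elements of $\GO(S) \setminus \GO(S)^\circ$ swap the two components.
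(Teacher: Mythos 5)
Your proof is correct, but it takes a genuinely different route from the paper's. The paper constructs a morphism in the \emph{opposite} direction: fixing a maximal isotropic $L \subset S$, it defines a $\GO(S)^\circ$-invariant map $\tau^{-1}(CZ_{r,n}) \to U_{r,n} \to W_{r,n}$ by $f \mapsto (f^{-1}(L), Q)$ followed by $\eta$, checks this is a bijection on $\GO(S)^\circ$-orbits, and invokes Zariski's main theorem to get an isomorphism $\tau^{-1}(CZ_{r,n})\sslash\GO(S)^\circ \cong \sigma^{-1}(Z_{r,n})$ over the open locus. It then concludes in one stroke by noting that both $\Hom(V,S)^{ss}\sslash\GO(S)^\circ$ and $W_{r,n}$ are normal and finite over $\overline{Z}_{r,n}$, hence both equal the relative normalization of $\overline{Z}_{r,n}$ in the common function field. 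You instead build a morphism $U_{r,n} \to Y := \Hom(V,S)^{ss}\sslash\GO(S)^\circ$ via an associated-bundle description over $Z_{r,n}$, extend it across the boundary by a graph-closure argument, and then use the universal property of the Stein factorization to produce the comparison map $W_{r,n} \to Y$, finishing with finite $+$ birational $+$ normal target. Your version avoids the arbitrary choice of $L$ and makes the matching of the two components of $\OG(r/2,S)$ with the two $\GO(S)^\circ$-orbits more structurally transparent through the $\pi_0$-bundle description; the paper's version avoids having to extend a morphism across the boundary at all, handling the extension once and for all with the relative normalization characterization. One small caveat: you write of ``the two fibres of $\psi$ over a given point of $Y|_{Z_{r,n}}$'', which should read ``the two fibres of $\psi$ over the two points of $Y|_{Z_{r,n}}$ lying above a given point of $Z_{r,n}$'', but the intended argument is clear and correct.
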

\begin{proof}
    Since $\GO(S)^\circ$ has finite index in $\GO(S)$, the morphism $\Hom(V,S)^{ss}\sslash \GO(S)^\circ \to \Hom(V,S)^{ss}\sslash \GO(S)$ is finite.
    Since $\Hom(V,S)^{ss}$ is smooth, $\Hom(V,S)^{ss}\sslash \GO(S)^\circ$ is normal \cite[p. 5]{GIT}.

    The open subset $\tau^{-1}(CZ_{r,n}) \sslash \GO(S)^\circ \subset \Hom(V,S)^{ss}\sslash \GO(S)^\circ$ is isomorphic to $\sigma^{-1}(Z_{r,n}) \subset W_{r,n}$ by the following construction.
    Fix an $r/2$-dimensional isotropic linear subspace $L \subset S$.
    Recall the variety $U_{r,n}$ from \eqref{eqn:DefinitionOfIncidenceVariety} and define a morphism $\gamma \colon \tau^{-1}(CZ_{r,n}) \to U_{r,n}$ by sending $f \in \tau^{-1}(CZ_{r,n})$ to the pair of the quadric
    \[
    Q = \{[v] \in \PP(V) \mid f^*\omega_S(v,v) = 0\}
    \]
    and the linear subspace $f^{-1}(L) \subset V$.
    Composing with $\eta \colon U_{r,n} \to W_{r,n}$ gives a morphism $\eta \circ \gamma \colon \tau^{-1}(CZ_{r,n}) \to W_{r,n}$.
    
    This morphism is $\GO(S)^\circ$-invariant, and one checks that it gives a bijection between the $\GO(S)^\circ$-orbits in $\tau^{-1}(CZ_{r,n})$ and the points of $\sigma^{-1}(Z_{r,n})$.
    Since $\sigma^{-1}(Z_{r,n})$ is smooth by Proposition \ref{propertiesofWr}, it follows from Zariski's main theorem that the induced morphism
    \[
    \psi \colon \tau^{-1}(CZ_{r,n}) \sslash \GO(S)^\circ \to \sigma^{-1}(Z_{r,n})
    \]
    is an isomorphism.

    The birational map $\psi$ fits in the following commutative diagram:
    \[
    \begin{tikzcd}
    \Hom(V,S)^{ss}\sslash \GO(S)^\circ \arrow[d] \arrow[r, "\psi", dashed] & W_{r,n} \arrow[d, "\sigma"] \\
    \Hom(V,S)^{ss}\sslash \GO(S) \arrow[r, "\simeq"] & \overline{Z}_{r,n}  
    \end{tikzcd}
    \]
    Let $L$ be the function field of $\Hom(V,S)^{ss}\sslash \GO(S)^\circ$, identified with the function field of $W_{r,n}$.
    Since these two varieties are normal and finite over $\overline{Z}_{r,n}$, they are both equal to the relative normalisation of $\overline{Z}_{r,n}$ in $\Spec L$, and so $\psi$ extends to an isomorphism of varieties.
\end{proof}



\begin{proposition}
\label{thm:localStructureOfW}
    Étale locally near a point $p \in \sigma^{-1}(Z_{r-2,n})$, the pair $(W_{r,n},p)$ is isomorphic to
    \[ 
    (C \times \AA^M, (0,0))
    \]
    where $C$ is the affine cone over the Segre embedding of $\PP^{n-r+1}\times \PP^{n-r+1}$, $0 \in C$ is the singular point, and $M = \dim W_{r,n} - \dim C$.
\end{proposition}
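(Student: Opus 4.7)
The plan is to apply Luna's étale slice theorem to the GIT presentation $W_{r,n} = \Hom(V, S)^{ss}\sslash \GO(S)^\circ$ established in Lemma~\ref{WisGITquotient}. The point $p \in \sigma^{-1}(Z_{r-2,n})$ corresponds to a closed $\GO(S)^\circ$-orbit in $\Hom(V, S)^{ss}$, and I would first argue that this orbit contains a map $f \colon V \to S$ of linear rank exactly $r-2$. Indeed, within a $\tau$-fibre over $q \in CZ_{r-2,n}$ the orbits of rank-$(r-1)$ maps degenerate to rank-$(r-2)$ ones (use a one-parameter subgroup of $\SO(S)$ which scales a generator of the radical $\ell \subset f(V)$ by $t$ and a hyperbolic dual by $t^{-1}$, fixing the rank-$(r-2)$ orthogonal complement), so by lower-semicontinuity of rank the closed orbits in the fibre are exactly the rank-$(r-2)$ ones. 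For such an $f$, set $T := f(V)$; then $\dim T = r-2$, $\omega_S|_T$ is nondegenerate, and $S = T \oplus T^\perp$ is an orthogonal decomposition with $T^\perp$ a rank-2 nondegenerate quadratic space.

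Next I would compute the stabilizer and normal space. An element $\phi \in \GO(S)^\circ$ fixes $f$ iff $\phi|_T = \id_T$; non-degeneracy of $\omega_S|_T$ then forces $\chi(\phi) = 1$, hence $\phi \in \SO(S)$, while $\phi|_{T^\perp}$ is arbitrary in $\SO(T^\perp) \simeq \gm$. Thus $G_f \simeq \gm$. Choosing a decomposition $V = V' \oplus \ker f$ with $f|_{V'} \isoto T$ gives $\Hom(V,S) = \Hom(T,S) \oplus \Hom(\ker f, S)$. The orbit tangent space $\{X \circ f : X \in \Lie \GO(S)^\circ\}$ lies in the first summand, and a direct calculation (writing $X \in \mathfrak{so}(S) \oplus \CC\id_S$ in block form for $S = T\oplus T^\perp$) identifies it with the subspace $(\mathfrak{so}(T) \oplus \CC\id_T) \oplus \Hom(T, T^\perp) \subset \Hom(T,T) \oplus \Hom(T,T^\perp)$. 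Consequently the contribution of $\Hom(T,S)$ to the normal space is $\Sym^2 T^\vee / \CC\omega_S|_T$, carrying a trivial $G_f$-action.

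Finally, choose a null basis $e_\pm$ of $T^\perp$ so that $G_f = \gm$ acts on $\CC e_\pm$ with weights $\pm 1$. The normal space $N_f$ decomposes under $G_f$ into a weight-zero part $(\Sym^2 T^\vee/\CC\omega_S|_T) \oplus \Hom(\ker f, T)$ and weight-$\pm 1$ parts $\Hom(\ker f, \CC e_\pm) \simeq \CC^{n-r+2}$. Because $\chi|_{G_f}$ is trivial (as $G_f \subset \Ogroup(S)$), Luna's theorem applied with the induced trivial linearization identifies $(W_{r,n}, p)$ étale-locally with $(N_f \sslash G_f, 0)$. The weight-zero summand contributes the affine factor $\AA^M$, while the weight-$\pm 1$ contribution $(\CC^{n-r+2}\oplus \CC^{n-r+2}) \sslash \gm$ has its ring of invariants generated by the bilinear pairings between the two factors, so it is precisely the affine cone $C$ over the Segre embedding $\PP^{n-r+1} \times \PP^{n-r+1} \hookrightarrow \PP^{(n-r+2)^2-1}$. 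Combining yields the claimed étale-local isomorphism $(W_{r,n}, p) \simeq (C \times \AA^M, (0,0))$. The main obstacle is the first step---identifying the closed orbit representative at $p$ and computing its stabilizer---after which both factors in the product follow from the classical first fundamental theorem of invariant theory for $\gm$.
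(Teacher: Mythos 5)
Your proposal is correct and follows essentially the same route as the paper: present $W_{r,n}$ as the GIT quotient $\Hom(V,S)^{ss}\sslash\GO(S)^\circ$, identify the closed orbit over a point of $Z_{r-2,n}$ as given by a map of linear rank exactly $r-2$ with stabilizer $\gm$, apply Luna's étale slice theorem, compute the normal representation $N_f\cong\AA^{n-r+2}(1)\oplus\AA^{n-r+2}(-1)\oplus\AA^M(0)$, and recognize $N_f\sslash\gm$ as $C\times\AA^M$. The only stylistic difference is that you phrase the stabilizer and normal-space computation intrinsically via the splitting $S=T\oplus T^\perp$ with $T=f(V)$, while the paper fixes an explicit orthonormal basis and distinguishes the $\gamma=0$ and $\gamma\neq 0$ cases to locate the closed orbit; the content is the same.
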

\begin{proof}
We use the isomorphism $\Hom(V,S)^{ss}\sslash \GO(S)^\circ \simeq W_{r,n}$.
Let $f \in \Hom(V,S)^{ss}$ be a point whose orbit maps to $\sigma^{-1}(Z_{r-2,n})$ under this isomorphism.
Then $f \in \tau^{-1}(CZ_{r-2,n})$, and we can choose a basis $v_1, \ldots, v_n$ for $V$ such that
\[
f^*\omega_S(v_i,v_j) = \omega_S(f(v_i),f(v_j)) = \begin{cases} 1 \text{ if } 1\le i = j \le r-2 \\ 
0 \text{ otherwise}\end{cases}
\]

This means that the elements $f(v_1),\ldots, f(v_{r-2})$ are orthonormal in $S$, and we extend this sequence to a basis of $S$ by adding vectors $e_1, e_2$ such that 
\begin{gather*}
\omega_S(e_i,e_j) = \delta_{ij} \\
\omega_S(e_i, f(v_j)) = 0 \text{ for all } i,j
\end{gather*}
Since the $f(v_{r-1}),\ldots, f(v_n)$ are orthogonal to each other and to each $f(v_i)$ with $1 \le i \le r-2$, the space 
\[
\langle f(v_{r-1}),\ldots, f(v_n)\rangle
\]
is an isotropic subspace of $\langle f(v_1),\ldots, f(v_{r-2}) \rangle^\perp = \langle e_1, e_2 \rangle$.

The isotropic subspaces of $\langle e_1, e_2 \rangle$ are $\langle e_1 \rangle$ and $\langle e_2 \rangle$.
Reordering the $e_i$, we may assume that $f(v_{r-1}),\ldots, f(v_n)$ are all contained in $\langle e_1 \rangle$.
After linearly transforming the $v_i$, we may assume that $f(v_{r-1}) = \gamma e_1$ for some $\gamma \in \CC$ and $f(v_i) = 0$ for $i \ge r$.
There is a subgroup $T \subset \GO(S)^\circ$, with $T \cong \CC^*$, consisting of elements $\phi_\lambda \in \GO(S)^\circ$, with $\lambda \in \CC^*$, defined by 
\begin{eqnarray*}
\phi_\lambda(f(v_i)) &=& f(v_i),\ \ \ \ \ i = 1,\ldots, r-2,\\
\phi_{\lambda}(e_1) &=& \lambda e_1 \\
\phi_{\lambda}(e_2) &=& \lambda^{-1}e_2.
\end{eqnarray*}
There are now two cases to consider:
If $\gamma \not= 0$, the stabiliser group of $f$ in $\GO(S)^\circ$ is trivial. 
The $GO(S)^\circ$-orbit of $f$ is not closed in $\Hom(V,S)^{ss}$, since $\lim_{\lambda \to 0}(\phi_\lambda f)$ is a point of $\Hom(V,S)^{ss}$.
If $\gamma = 0$, the stabiliser group of $f$ is $T$.
In this case the $GO(S)^\circ$-orbit of $f$ is closed in $\Hom(V,S)^{ss}$, since the orbit has minimal dimension among orbits mapping to $\sigma^{-1}(Z_{r-2,n})$.

Let us write $\AA^i(j)$ for a $T$-representation of dimension $i$ with weight $j$.
We then have an isomorphism of $T$-representations
\[
T_{\Hom(V,S),f} \cong \Hom(V,S) \cong \AA^n(1) \oplus \AA^n(-1) \oplus \AA^{n(r-2)}(0).
\]

The Luna étale slice theorem implies that étale locally near the orbit of $f$, the variety $\Hom(V,S)^{ss} \sslash \GO(S)^\circ$ is isomorphic to
\[
N_f \sslash T,
\]
where $N_f = T_{\Hom(V,S),f}/T_{\GO(S)^\circ f,f}$ is the normal space to the $\GO(S)^\circ$-orbit of $f$.
As $T$-representations
\[
T_{\GO(S)^\circ f,f} \cong \Lie(\GO(S)^\circ)/\Lie(T).
\]
Computing
\begin{gather*}
\Lie(\GO(S)^\circ) \cong \AA^{r-2}(1) \oplus \AA^{r-2}(-1) \oplus \AA^{\binom{r-2}{2}+2}(0)\\
\Lie(T) = \AA^1(0)
\end{gather*}
gives
\[
N_f \cong \AA^{n-r+2}(1) \oplus \AA^{n-r+2}(-1) \oplus \AA^M(0)
\]
for some $M$.
The quotient $N_f\sslash T$ is isomorphic to $C \times \AA^M$ with $C$ the cone over $\PP^{n-r+1}\times\PP^{n-r+1}$, so this completes the proof.
\end{proof}

\begin{corollary}
    If $r \ge 4$, the singular locus of $W_{r,n}$ is $\sigma^{-1}(\overline Z_{r-2,n})$.
    \label{singularlocusWr}
\end{corollary}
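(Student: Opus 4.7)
The plan is to combine the two preceding results: Proposition \ref{propertiesofWr} already gives the containment $\sing(W_{r,n}) \subseteq \sigma^{-1}(\overline{Z}_{r-2,n})$, so only the reverse inclusion remains to be proved.

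First I would reduce to checking singularity on the top open stratum. Since $\sing(W_{r,n})$ is closed in $W_{r,n}$, it suffices to show that every point $p \in \sigma^{-1}(Z_{r-2,n})$ is singular, provided this open subset is dense in $\sigma^{-1}(\overline{Z}_{r-2,n})$. Density follows because $\sigma$ is a finite morphism, $\overline{Z}_{r-2,n}$ is irreducible, and the dimension formula \eqref{eq:dimZr} shows that the complement $\overline{Z}_{r-3,n} \subset \overline{Z}_{r-2,n}$ is of strictly smaller dimension (this passes to preimages under the finite map $\sigma$).

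Next, fix such a $p$. By Proposition \ref{thm:localStructureOfW}, étale-locally at $p$ the variety $W_{r,n}$ is isomorphic to $C \times \AA^M$ at the origin, where $C$ is the affine cone over the Segre embedding of $\PP^{n-r+1} \times \PP^{n-r+1}$. The hypothesis $r \ge 4$ combined with $r \le n$ (needed for $\overline{Z}_{r-2,n}$ to be non-empty) ensures that $n - r + 1 \ge 1$, so the Segre variety is a smooth proper non-linear subvariety of its ambient projective space. It is a standard fact that the affine cone over any smooth projective $X \subsetneq \PP^N$ has an isolated singularity at its vertex; applied to $C$, this forces $(0,0)$ to be singular in $C \times \AA^M$. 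Because smoothness is an étale-local property, $p$ must be singular in $W_{r,n}$, and this completes the argument.

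The proof is essentially a direct assembly of the ingredients at hand, so I do not anticipate a substantive obstacle. The only mildly delicate point is the density reduction to the top rank stratum; for this the dimension formula \eqref{eq:dimZr} is the key input.
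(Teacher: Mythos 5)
Your proof is correct and follows the same structure as the paper's: one containment from Proposition \ref{propertiesofWr}, the reverse from Proposition \ref{thm:localStructureOfW} applied on the open stratum $\sigma^{-1}(Z_{r-2,n})$, plus closedness of the singular locus. One small imprecision worth fixing: the affine cone over a smooth $X \subsetneq \PP^N$ is smooth when $X$ is a linear subspace, so the ``standard fact'' should require $X$ to be non-linear (which you do observe the Segre is, so the application is sound).
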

\begin{proof}
By Proposition~\ref{propertiesofWr}, $W_{r,n}$ is nonsingular away from $\sigma^{-1}(\overline Z_{r-2,n})$.
If $r \ge 4$, then $W_{r,n}$ is singular at every $p \in \sigma^{-1}(Z_{r-2,n})$, by Proposition~\ref{thm:localStructureOfW}, and the claim follows since the singular locus is closed.
\end{proof}

\subsection{Linear sections of double symmetric determinental loci}\label{mainexamplessection}
The varieties appearing in Theorems \ref{mainthm1} and \ref{mainthm2} will be constructed by taking general linear sections of the double cover $W_{r,n}$, i.e., complete intersections 
\begin{equation}\label{eq:ci}
    X=W_{r,n}\cap H_1\cap \cdots \cap H_c
\end{equation} where the $H_i$ are general divisors in $|H|$. In other words, $X$ is a ramified double cover of a linear section of $Z_{r,n}$,
$$
X\langto Z_{r,n}\cap L_1\cap \ldots \cap L_c
$$for hyperplanes $L_1,\ldots,L_c$ in $\PP(\Sym^2 V^\vee)$.

We are particularly interested in the case when $X$ is also a Fano variety. This can happen only when $r < 6$:

\begin{lemma}\label{ikkeFano}
    Let $X$ denote a general linear section of  $W_{r,n}$. If $6\le r\le n$, then either $X$ is singular, or $K_X$ is base-point free.
\end{lemma}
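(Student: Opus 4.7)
The plan is a dimension count comparing the size of the singular locus of $W_{r,n}$ with the coefficient of $H$ in $-K_{W_{r,n}}$. By Corollary~\ref{singularlocusWr} the singular locus equals $\sigma^{-1}(\overline Z_{r-2,n})$, which has the same dimension as $\overline Z_{r-2,n}$ since $\sigma$ is finite; by Proposition~\ref{thm:propertiesOfZ} this dimension equals $(r-2)n - \tfrac{1}{2}(r-2)(r-3) - 1$.

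Next I would establish the Bertini-type smoothness criterion: for general $H_1, \ldots, H_c \in |H|$, the complete intersection $X$ is smooth if and only if $c > \dim \overline Z_{r-2,n}$. Since $H = \sigma^*\mathcal{O}(1)$ is the pullback of the very ample $\mathcal{O}(1)$ on $\PP(\Sym^2 V^\vee)$ by the finite morphism $\sigma$, a general divisor in $|H|$ is the pullback of a general hyperplane. A general $c$-tuple of hyperplanes in $\PP(\Sym^2V^\vee)$ avoids $\overline Z_{r-2,n}$ precisely when $c > \dim \overline Z_{r-2,n}$; pulling back, this is the condition that $X$ misses $\Sing(W_{r,n})$, and in that case $X$ is a transverse complete intersection in the smooth open subset $W_{r,n}\setminus \Sing(W_{r,n})$, hence smooth.

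By adjunction and \eqref{KW}, one has $K_X = (c - rn/2)H|_X$, and since $H|_X$ is ample (being the restriction of the pullback of an ample class by a finite map), $K_X$ is base-point free if and only if $c \ge rn/2$. Thus the lemma reduces to the numerical claim that smoothness of $X$ forces $c \ge rn/2$, i.e.\
\[
\dim \overline Z_{r-2,n} + 1 \ge \tfrac{rn}{2}.
\]

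Substituting the dimension formula, this inequality is equivalent to $n(r-4) \ge (r-2)(r-3)$. For $r \ge 6$ and $n \ge r$ one has $n(r-4) \ge r(r-4) = r^2 - 4r$, and the comparison $r^2 - 4r \ge r^2 - 5r + 6 = (r-2)(r-3)$ reduces exactly to the hypothesis $r \ge 6$, which completes the argument. The only nontrivial step in this outline is the Bertini-type smoothness criterion, which is a standard general-position argument but warrants care since $W_{r,n}$ itself is singular.
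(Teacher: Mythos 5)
Your proof is correct and follows essentially the same route as the paper: compute $K_X = (c - rn/2)H$ by adjunction, observe that if $c < rn/2$ a dimension count forces $X$ to meet $\sigma^{-1}(\overline Z_{r-2,n}) = \Sing(W_{r,n})$, and check the relevant numerical inequality (your reduction to $n(r-4) \ge (r-2)(r-3)$ is an equivalent rearrangement of the paper's $(n-r)(r-4)/2 + r/2 - 3 \ge 0$). The one point both you and the paper leave implicit, which you correctly flag as needing care, is that $X$ passing through a singular point of $W_{r,n}$ forces $X$ to be singular there; this holds because the multiplicity of a general linear section at a point is at least the multiplicity of the ambient variety at that point.
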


\begin{proof}
Write $X$ as in \eqref{eq:ci} for divisors $H_i\in |H|$. As the $H_i$ are general, and $W_{r,n}$ is Gorenstein with canonical singularities, it follows that the same holds for $X$. By Proposition \ref{propertiesofWr} and adjunction, the canonical divisor is given by
$$
K_X=\left(c-\frac{rn}2\right)H.
$$Therefore, if $c>rn/2$, $X$ is of general type, and for $c=rn/2$, it is Calabi--Yau. If $c<rn/2$, we note that
$$
\dim \sigma^{-1}(Z_{r-2,n})-c\ge \dim Z_{r-2,n}-rn/2+1=(n-r)(r-4)/2+r/2-3
$$which is non-negative for our choices of $r$ and $n$. This means that $X$ meets the singular locus of $W_{r,n}$, and hence it must be singular.
\end{proof}

By Lemma \ref{ikkeFano}, we obtain Fano varieties as linear sections of $W_{r,n}$ only when $r=2$ or $r=4$. 
The case $r = 2$ gives $W_{2,n}=\PP^{n-1}\times \PP^{n-1}$, and many linear sections of $\PP^{n-1} \times \PP^{n-1}$ are indeed Fano, but these varieties do not have interesting cohomology groups from the point of view of this paper.

We therefore focus on the case $r = 4$.
In this case the existence of the double cover $\sigma:W_{4,n}\to {\overline Z}_{4,n}$ is explained as follows. A smooth quadric surface in $\PP^3$ contains two families of lines; thus a quadric of rank 4 in $n$ variables contains two families of $(n-2)$-planes, each parameterised by a $\PP^1$. Thus $W_{4,n}$ parameterises quadrics plus a choice of one of the two families.

The dimensions of the first few rank loci $Z_i$ are given by
$$
\dim Z_{4,n}=4n-7, \,\, \dim Z_{3,n}=3n-4, \,\, \dim Z_{2,n}=2n-2,  \,\,\dim Z_{1,n} =n-1.
$$
By Corollary~\ref{singularlocusWr}, the double cover $W_{4,n}$ is singular along $\sigma^{-1}(\overline Z_{2,n})$, which has codimension $2n-5$ in $W_{4,n}$. By \eqref{KW}, the canonical divisor of $W_{4,n}$ equals
$$
K_{W_{4,n}}=-2nH.
$$
\begin{definition}
\label{defn:CompleteIntersectionX}
    Given $n \ge 4$ and $c \ge 0$, let $X_{n,c}$ be a general complete intersection 
\begin{equation}\label{eq:definerX}
    X_{n,c}=W_{4,n}\cap H_1\cap \cdots \cap H_{c}.
\end{equation}
\end{definition}
The varieties $X$ in Theorems \ref{mainthm1} and \ref{mainthm2} are $X_{n,2n-1}$ with $n \ge 5$ and $n \ge 6$, respectively.

\section{Cohomology computations}\label{cohomologycomputations}
Let $X_{n,c}^{\mathrm{sm}}$ be the smooth part of $X_{n,c}$.  In this section we compute the low degree cohomology of $X_{n,c}^{\mathrm{sm}}$. 
In Proposition~\ref{prop:cohomologyofGBO} we compute the low degree cohomology of $\BGO(4)^\circ$, 
and in Proposition~\ref{thm:compareBGOToX} we show that this agrees with low degree cohomology of $X^{\mathrm{sm}}_{n,c}$.
We summarise the consequences for the cohomology of $X_{n,c}^{\mathrm{sm}}$ in Corollary~\ref{thm:summaryOfHX}. 
In order to prove Theorem \ref{mainthm1}, we want a non-zero 2-torsion cohomology class of degree 3, and for Theorem \ref{mainthm2}, the class should furthermore have a non-zero square modulo 2 (this will be explained in Proposition \ref{maintopologicalobstruction}).

\subsection{Cohomology of $\BSO(4)$}
The cohomology rings with integer coefficients of the classifying spaces $\BSO(n)$ were computed by Brown \cite{brown} and Feshbach \cite{feshbach}. For $n=4$, the ring is given by
$$
H^*(\BSO(4),\ZZ) = \ZZ[\nu,e,p]/(2\nu),
$$where $e$ is the Euler class (of degree 4), $p$ is the Pontrjagin class (degree 4), and $\nu$ is a 2-torsion class of degree 3. Thus the low-degree cohomology groups of $\BSO(4)$ are given by
\begin{center}
\begin{tabular}{c|c|c|c|c|c|c}
   $H^0$ & $H^1$  & $H^2$ & $H^3$ & $H^4$ & $H^5$ & $H^6$\\ \hline
        $\ZZ$&  0 & 0 & $\ZZ/2 \cdot \nu$ & $\ZZ  p\oplus \ZZ e$ & 0 & $\ZZ/2 \cdot \nu^2$ \\
\end{tabular}
\end{center}The cohomology ring of $\BSO(4)$ with $\ZZ/2$-coefficients is given by
\begin{equation}\label{BSOring}
H^*(\BSO(4),\ZZ/2) = \ZZ/2 [w_2,w_3,w_4].
\end{equation}where $w_2,w_3,w_4 \in H^*(\BSO(4),\ZZ/2)$ denote the Stiefel--Whitney classes \cite{milnor}.

The class $\nu$ is equal to $\beta_\ZZ(w_2)\in H^3(\BSO(4),\ZZ)$, where $\beta_\ZZ$ is the Bockstein homomorphism associated with
$$
0\langto \ZZ\langto \ZZ \langto \ZZ/2\langto 0.
$$Moreover, the mod 2 reduction of $\nu$ is given by $w_3$ \cite[p. 97]{hatcher}. Therefore, the mod 2 reduction of $\nu^2$ equals
\[
w_3^2\neq 0 \, \text{ in } H^6(\BSO(4),\ZZ/2).
\]

\subsection{Cohomology of $\BGO(4)^\circ$}\label{BGOsection}
We next compute the low degree cohomology of $\BGO(4)^\circ$.
\begin{proposition}\label{prop:cohomologyofGBO}
We have
\[
H^1(\BGO(4)^\circ,\ZZ) = 0,\, H^2(\BGO(4)^\circ,\ZZ) = \ZZ, \,H^3(\BGO(4)^\circ,\ZZ) = \ZZ/2.
\]
Moreover, if $0 \not= \gamma \in H^3(\BGO(4)^\circ,\ZZ)$, the mod 2 reduction of $\gamma^2$ is non-zero.
\end{proposition}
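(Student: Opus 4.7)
The plan is to exploit the short exact sequence $1 \to \SO(S) \to \GO(S)^\circ \xrightarrow{\chi} \CC^* \to 1$ recorded in Section \ref{symmetricasGIT}, which induces a fibration $\BSO(4) \to \BGO(4)^\circ \to B\CC^*$ over the simply connected base $B\CC^*$. Because $H^*(B\CC^*,\ZZ) = \ZZ[t]$ with $|t|=2$ is torsion-free, the universal coefficient theorem makes the Serre spectral sequence take the untwisted and Tor-free form
\[
E_2^{p,q} = H^p(B\CC^*,\ZZ) \otimes H^q(\BSO(4),\ZZ) \Longrightarrow H^{p+q}(\BGO(4)^\circ,\ZZ),
\]
which in particular vanishes in all odd columns $p$.

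Using the recalled description of $H^*(\BSO(4),\ZZ)$, the only entries with $p+q \le 3$ are $E_2^{0,0} = \ZZ$, $E_2^{2,0} = \ZZ$, and $E_2^{0,3} = \ZZ/2 \cdot \nu$. I then verify that every differential into or out of these entries vanishes. The outgoing differentials $d_r \colon E_r^{0,3} \to E_r^{r,4-r}$ land in zero for $r = 2$ (since $H^2(\BSO(4)) = 0$) and for $r = 3$ (odd column), while for $r = 4$ the target $E_4^{4,0}$ is a subquotient of $\ZZ$, so any map from $\ZZ/2$ is zero. Incoming differentials to $E_r^{2,0}$ come from $E_r^{2-r,r-1}$, which vanishes for $r=2$ (as $H^1(\BSO(4)) = 0$) and for $r \ge 3$ (negative $p$-index). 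This yields $H^1 = 0$, $H^2 = \ZZ$, and $H^3 = \ZZ/2$.

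For the squaring statement, the above analysis shows that the edge homomorphism $i^* \colon H^3(\BGO(4)^\circ,\ZZ) \to H^3(\BSO(4),\ZZ)$ induced by $\SO(4) \hookrightarrow \GO(4)^\circ$ is an isomorphism $\ZZ/2 \xrightarrow{\sim} \ZZ/2$, so any nonzero $\gamma$ satisfies $i^*\gamma = \nu$. Since mod 2 reduction and pullback both commute with cup product, I compute
\[
i^*\bigl(\gamma^2 \bmod 2\bigr) = (\nu \bmod 2)^2 = w_3^2 \in H^6(\BSO(4),\ZZ/2) = \ZZ/2[w_2,w_3,w_4],
\]
which is manifestly nonzero. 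Hence $\gamma^2$ has nonzero mod 2 reduction in $H^6(\BGO(4)^\circ,\ZZ/2)$. The whole argument is essentially formal; the only slightly delicate point is the vanishing of the potential differential $d_4 \colon E_4^{0,3} \to E_4^{4,0}$, which is forced by the torsion-freeness of $H^4(B\CC^*,\ZZ)$.
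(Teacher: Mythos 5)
Your proposal is correct and reaches the same conclusions as the paper, but by a genuinely different route. The paper exploits the same short exact sequence $1 \to \SO(4) \to \GO(4)^\circ \to \CC^* \to 1$, but uses it to produce a $\CC^*$-bundle $\pi \colon \BSO(4) \to \BGO(4)^\circ$ and runs the associated Gysin long exact sequence, whereas you run the Serre spectral sequence of the complementary fibration $\BSO(4) \to \BGO(4)^\circ \to B\CC^*$. The two computations are closely related, and the bookkeeping is comparable; the paper's Gysin argument is marginally shorter since a circle bundle has only two rows and the issue of higher differentials disappears into a single long exact sequence. For the squaring statement the two arguments are identical in substance: the paper deduces from the Gysin sequence that $\pi^*\colon H^3(\BGO(4)^\circ,\ZZ) \to H^3(\BSO(4),\ZZ)$ is an isomorphism, while in your proof the edge homomorphism to the fibre plays that role; both then reduce to $\bar\nu^2 = w_3^2 \ne 0$ in $H^6(\BSO(4),\ZZ/2)$.

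One step needs tightening. Your justification that $d_4 \colon E_4^{0,3} \to E_4^{4,0}$ vanishes, namely that ``the target is a subquotient of $\ZZ$, so any map from $\ZZ/2$ is zero,'' is not a valid general principle, since $\ZZ/2$ is itself a subquotient of $\ZZ$; and invoking torsion-freeness of $H^4(B\CC^*,\ZZ)$ alone does not settle it either, since $E_4^{4,0}$ is a priori a quotient of $\ZZ$ and such a quotient can have torsion. What actually holds is that $E_4^{4,0} = E_2^{4,0} = \ZZ$ on the nose: the bottom row supports no outgoing differentials (their targets lie in negative rows), and the possible incoming differentials $d_2 \colon E_2^{2,1} \to E_2^{4,0}$ and $d_3 \colon E_3^{1,2} \to E_3^{4,0}$ both have zero source, since $E_2^{2,1} = H^2(B\CC^*)\otimes H^1(\BSO(4)) = 0$ and $E_2^{1,2} = 0$ (odd column). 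Hence $E_4^{4,0} \cong \ZZ$ is torsion-free, and $\Hom(\ZZ/2,\ZZ)=0$ forces $d_4 = 0$. With that repair the argument is complete.
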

\begin{proof}
We use the exact sequence 
$$
1\langto \SO(4)\langto \GO(4)^\circ \langto \CC^*\langto 1.
$$This gives a  fibre bundle $\pi:\BSO(4) \to \BGO(4)^\circ$ with fiber $\CC^*$. The associated Gysin sequence for this circle bundle takes the form
$$
\cdots \to H^i(\BGO(4)^\circ,\ZZ) \xrightarrow{\pi^*} H^i(\BSO(4),\ZZ) \xrightarrow{\pi_*} H^{i-1}(\BGO(4)^\circ,\ZZ) \to \cdots
$$
Using the fact that $H^0(\BSO(4),\ZZ) \to H^0(\BGO(4)^\circ,\ZZ)$ is an isomorphism and 
\[
H^1(\BSO(4),\ZZ) = H^2(\BSO(4),\ZZ) = 0,
\]
this sequence gives that
\[
H^1(\BGO(4)^\circ,\ZZ) = 0
\]
and
\[
H^2(\BGO(4)^\circ,\ZZ) = \ZZ.
\]
Since $H^3(\BSO(4),\ZZ) = \ZZ/2$, the map $H^3(\BSO(4),\ZZ) \to H^2(\BGO(4)^\circ,\ZZ)$ is 0, and so $H^3(\BGO(4),\ZZ) = \ZZ/2$.
If $0 \not= \gamma \in H^3(\BGO(4)^\circ,\ZZ)$, then $\pi^*(\gamma^2) = \nu^2$, which has reduction $w_3^2 \not= 0$ modulo 2.
It follows that $\gamma^2$ has non-zero reduction modulo 2.
\end{proof}


\subsection{Cohomology of hyperplane sections of $W_{r,n}^{\mathrm{sm}}$}
\label{sec:cohomologyOfHyperplaneSections}
Let $S$ be a quadratic $r$-dimensional vector space and $L \subseteq \PP(\Sym^2V^\vee)$ is a codimension $c$ linear subspace.
We analyse the natural homomorphism
\begin{equation}
\label{eqn:pullbackHomomorphism}
H^*(\BGO(S)^\circ,\ZZ) \to H^*(L \times_{\PP(\Sym^2 V^\vee)} W^{\mathrm{sm}}_{r,n},\ZZ)
\end{equation}
and show that it is an isomorphism in low degrees.
To define the homomorphism, begin with the pullback maps
\[
H^*(\BGO(S)^\circ,\ZZ) \overset{\simeq}{\to}H^*_{\GO(S)^\circ}(\Hom(V,S), \ZZ) \to H^*_{\GO(S)^\circ}(\Hom(V,S) \setminus \tau^{-1}(CZ_{r-2,n}),\ZZ),
\]
with $\tau$ and $CZ_{r-2,n}$ as defined in Section \ref{symmetricasGIT}.
By Lemma \ref{thm:freeAction} and Corollary \ref{singularlocusWr}, the variety $W_{r,n}^{\mathrm{sm}}$ is isomorphic to $(\Hom(V,S) \setminus \tau^{-1}(CZ_{r-2,n})/\GO(S)^\circ$, where the group action is free, so we get an isomorphism
\[
H^*_{\GO(S)^\circ}(\Hom(V,S) \setminus \tau^{-1}(CZ_{r-2,n}),\ZZ) \overset{\simeq}{\to} H^*(W_{r,n}^{\mathrm{sm}},\ZZ).
\]
Finally, we have the pullback homomorphism
\[
H^*(W_{r,n}^{\mathrm{sm}},\ZZ) \to H^*(L \times_{\PP(\Sym^2 V^\vee)} W_{r,n}^{\mathrm{sm}},\ZZ),
\]
and composing these maps gives \eqref{eqn:pullbackHomomorphism}.

\begin{lemma}\label{BGlemma1}
Let $G$ be an algebraic group on an affine space $\AA^N$.
Let $Z \subset \AA^N$ be a closed, $G$-invariant subset of codimension $c$, and let $U = \AA^N \setminus Z$.
Then the natural homomorphisms
\begin{gather*}
H^l_G(\mathrm{pt},\ZZ) \to H^l_G(U, \ZZ) \\
H^l_G(\mathrm{pt},\ZZ/2) \to H^l_G(U, \ZZ/2)
\end{gather*}
are isomorphisms for $l < 2c-1$, and injective for $l = 2c-1$.
\end{lemma}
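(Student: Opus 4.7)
The plan is to translate the statement into ordinary cohomology via the Borel construction and then derive the required vanishing from Lefschetz duality applied to the closed complex subvariety $Z \subset \AA^N$.

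The first step is to choose a Totaro-style algebraic approximation $E_mG$ to $EG$: a smooth quasi-projective $G$-variety on which $G$ acts freely, with cohomology vanishing in positive degrees up to some bound $\gg 2c$. Setting $M_m = E_mG \times_G \AA^N$, $U_m = E_mG \times_G U$, and $Z_m = E_mG \times_G Z$, one has $H^l_G(X, A) \cong H^l(E_mG \times_G X, A)$ in the relevant range of degrees, uniformly in $A \in \{\ZZ, \ZZ/2\}$. The bundle $M_m \to E_mG/G$ is a locally trivial fibre bundle with contractible fibre $\AA^N$, hence a weak homotopy equivalence. This identifies the natural map $H^l_G(\mathrm{pt}, A) \to H^l_G(\AA^N, A)$ with an isomorphism, and reduces the lemma to showing that the restriction $H^l(M_m, A) \to H^l(U_m, A)$ is an isomorphism for $l < 2c-1$ and injective for $l = 2c-1$.

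Plugging this into the long exact sequence of the pair $(M_m, U_m)$, the lemma reduces further to the vanishing
\[
H^l(M_m, U_m; A) = 0 \quad \text{for all } l \le 2c-1.
\]
Since $Z$ is a closed $G$-invariant subvariety of $\AA^N$ of complex codimension $c$, the set $Z_m = M_m \setminus U_m$ is a closed complex subvariety of complex codimension $c$ in the smooth complex variety $M_m$. By Lefschetz duality,
\[
H^l(M_m, M_m \setminus Z_m; A) \cong H^{BM}_{2\dim_\CC M_m - l}(Z_m; A),
\]
and the right hand side vanishes whenever $2\dim_\CC M_m - l > 2\dim_\CC Z_m = 2\dim_\CC M_m - 2c$, that is, for $l < 2c$. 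Since the argument is coefficient-independent, it handles $A = \ZZ$ and $A = \ZZ/2$ simultaneously.

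The main obstacle is the vanishing step in the third paragraph, where $Z_m$ may well be singular. The form of Lefschetz duality used applies to any closed subset of an oriented manifold and places no smoothness requirement on the subset, so this technical point is already absorbed. As an alternative, one can pick a Whitney stratification of $Z_m$ by smooth complex strata of codimension $\ge c$ in $M_m$ and argue by induction on strata using the Thom isomorphism, arriving at the same bound.
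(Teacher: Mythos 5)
Your proof is correct, but it takes a genuinely different route from the paper's. The paper's argument is a one-step application of the Leray--Serre spectral sequence for the Borel fibration $U \hookrightarrow EG \times_G U \to BG$, with $E_2$-page $H^i_G(\mathrm{pt}, H^j(U))$: since $U = \AA^N \setminus Z$ with $Z$ of codimension $c$, the groups $H^j(U)$ vanish for $0 < j \le 2c-2$, the relevant differentials die, and the edge map $H^l_G(\mathrm{pt}) \to H^l_G(U)$ is then read off to be an isomorphism for $l \le 2c-2$ and injective for $l = 2c-1$. You instead pass to a Totaro-style finite-dimensional approximation $E_mG$, rewrite everything as ordinary cohomology of the smooth quasi-projective varieties $M_m = E_mG\times_G \AA^N \supset U_m$, apply the long exact sequence of the pair, and kill $H^l(M_m, U_m)$ for $l \le 2c-1$ by Alexander--Lefschetz duality against the codimension-$c$ closed subvariety $Z_m$.

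The two approaches rely on the same geometric fact (removing a complex codimension-$c$ closed set changes cohomology only from degree $2c-1$ up), but implement it at different levels: the paper applies it to the fibre $U \subset \AA^N$ and lets the spectral sequence propagate it to equivariant cohomology, while you apply it directly to the total space $Z_m \subset M_m$ of the (approximated) Borel construction, avoiding the spectral sequence entirely. Your route is somewhat longer because it has to set up the approximation machinery and verify that $M_m \to E_mG/G$ is a weak equivalence, but it makes the vanishing mechanism more transparent and keeps everything in the comfortable world of pair sequences and duality for smooth varieties. Both arguments are coefficient-agnostic, which the statement requires. Your final paragraph's worry about the singularities of $Z_m$ is correctly dismissed: the Borel--Moore/Alexander duality you invoke holds for an arbitrary closed subset of a manifold, and the only fact used about $Z_m$ is the upper bound on its Borel--Moore homological dimension, which holds for any complex variety of complex dimension $\dim_\CC M_m - c$.
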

\begin{proof}
The Leray--Serre spectral sequence for equivariant cohomology \cite[p. 501]{McCleary} has $E_2$-page $H^{i}_G(\mathrm{pt}, H^j(U))$ and converges to $H^{i + j}_G(U)$.
Since $H^j(U) = 0$ for $0 < j \le 2c - 2$, there are no non-trivial differentials whose domain is of degree $(i,j)$ with $i + j \le 2c-2$.
The claim of the theorem follows from this.
\end{proof}

\begin{lemma}
\label{thm:cohomologyOfComplement}
    The homomorphisms
    \begin{gather*}
    H^i_{\GO(S)^\circ}(\mathrm{pt},\ZZ) \to H^i_{\GO(S)^\circ}(\Hom(V,S) \setminus \tau^{-1}(CZ_{r-2,n}),\ZZ) \\
    H^i_{\GO(S)^\circ}(\mathrm{pt},\ZZ/2) \to H^i_{\GO(S)^\circ}(\Hom(V,S) \setminus \tau^{-1}(CZ_{r-2,n}),\ZZ/2)
    \end{gather*}
    are isomorphisms for $i < 2n-2r+3$ and injective for $i = 2n-2r +3$.
\end{lemma}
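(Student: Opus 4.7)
The plan is to obtain this lemma as a direct application of Lemma \ref{BGlemma1} to the group $G = \GO(S)^\circ$ acting on the affine space $\AA^N = \Hom(V,S)$, with the closed invariant subset $Z = \tau^{-1}(CZ_{r-2,n})$. The $\GO(S)^\circ$-invariance of $Z$ is immediate from the construction, since $\tau$ is $\GO(S)^\circ$-invariant: for any $\phi \in \GO(S)^\circ$ and $f \in \Hom(V,S)$, one has $\tau(\phi \circ f) = \chi(\phi) \tau(f)$, and scaling preserves the rank of a quadratic form.

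The key input is the codimension bound. By Lemma \ref{BGlemma2}, the codimension of $\tau^{-1}(CZ_{r-2,n})$ in $\Hom(V,S)$ equals $n-r+2$, so setting $c = n-r+2$ in Lemma \ref{BGlemma1} yields the conclusion that the restriction maps
\[
H^l_{\GO(S)^\circ}(\mathrm{pt},\ZZ) \to H^l_{\GO(S)^\circ}(\Hom(V,S) \setminus \tau^{-1}(CZ_{r-2,n}), \ZZ)
\]
(and similarly with $\ZZ/2$-coefficients) are isomorphisms for $l < 2c-1 = 2n-2r+3$ and injective for $l = 2c-1 = 2n-2r+3$. This matches exactly the range claimed in the statement.

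There is essentially no obstacle here; the content of the proof sits in Lemma \ref{BGlemma1} (a Leray--Serre spectral sequence computation) and Lemma \ref{BGlemma2} (a dimension count for symmetric rank loci), both of which are already established. The present lemma is just the combination of these two facts applied to the specific group and invariant subset of interest.
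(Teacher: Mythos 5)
Your proof is correct and follows the same approach as the paper, which simply says ``Combine Lemma \ref{BGlemma2} and Lemma \ref{BGlemma1}.'' You have spelled out the application clearly, including the $\GO(S)^\circ$-invariance of $Z = \tau^{-1}(CZ_{r-2,n})$ and the substitution $c = n-r+2$, but the underlying argument is identical.
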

\begin{proof}
Combine Lemma \ref{BGlemma2} and Lemma \ref{BGlemma1}.
\end{proof}

\begin{lemma}\label{GMLefschez}
Let $L \subseteq \PP(\Sym^2V^\vee)$ be a generic codimension $c$ linear subspace.
The homomorphisms 
\begin{gather*}
H^i(W^{\mathrm{sm}}_{r,n},\ZZ) \to H^i(L \times_{\PP(\Sym^2 V)^\vee} W^{\mathrm{sm}}_{r,n},\ZZ) \\
H^i(W^{\mathrm{sm}}_{r,n},\ZZ/2) \to H^i(L \times_{\PP(\Sym^2 V^\vee)} W^{\mathrm{sm}}_{r,n},\ZZ/2)
\end{gather*}
are isomorphisms for $i \le \dim W_{r,n}-c$ and injective for $i=\dim W_{r,n}-c$.
\end{lemma}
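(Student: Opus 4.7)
The plan is to prove the lemma by induction on $c$, reducing to the single-hyperplane case at each step via a suitable form of the Lefschetz hyperplane theorem for smooth quasi-projective varieties.

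Set $n := \dim W_{r,n}$. I would choose a flag of generic linear subspaces $\PP(\Sym^2V^\vee) = L_0 \supset L_1 \supset \cdots \supset L_c = L$ with $\operatorname{codim} L_k = k$, and set $Y_k := \sigma^{-1}(L_k) \cap W^{\mathrm{sm}}_{r,n}$, so that $Y_0 = W^{\mathrm{sm}}_{r,n}$ and $Y_c$ is the fibre product appearing in the statement. Bertini applied to the finite morphism $\sigma$, combined with Proposition~\ref{thm:propertiesOfZ} and Corollary~\ref{singularlocusWr}, ensures that each $Y_k$ is smooth of pure dimension $n - k$. The inductive step reduces to showing that the pullback $H^i(Y_{k-1}) \to H^i(Y_k)$ is an isomorphism for $i < n - k$ and an injection for $i = n - k$; composing these statements for $k = 1, \ldots, c$ then yields the lemma (reading ``$i \le \dim W_{r,n} - c$'' in the statement as ``$i < \dim W_{r,n} - c$'').

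For the inductive step, I would invoke the Lefschetz hyperplane theorem of Hamm--L\^e and Goresky--MacPherson in its form for quasi-finite morphisms: if $Y$ is a smooth quasi-projective variety of pure dimension $d$ equipped with a quasi-finite morphism $f \colon Y \to \PP^N$, and $H \subset \PP^N$ is a generic hyperplane, then the pair $(Y, f^{-1}(H))$ is $(d-1)$-connected. Applying this with $Y = Y_{k-1}$, $f = \sigma|_{Y_{k-1}}$, and $H$ the hyperplane cutting $L_k$ out of $L_{k-1}$, Hurewicz and the long exact sequence of the pair produce the required cohomological isomorphism and injection, for both $\ZZ$- and $\ZZ/2$-coefficients.

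The main obstacle is that $\sigma|_{W^{\mathrm{sm}}_{r,n}}$ is quasi-finite but not proper, so the most direct forms of the Lefschetz theorem do not apply immediately. A convenient workaround is to apply Goresky--MacPherson's theorem to the projective variety $W_{r,n}$ equipped with its finite morphism to $\PP(\Sym^2V^\vee)$, obtaining the connectivity statement for $(W_{r,n}, \sigma^{-1}(L) \cap W_{r,n})$, and then to transfer it to $W^{\mathrm{sm}}_{r,n}$ by excision: the singular locus $\sigma^{-1}(\overline Z_{r-2,n}) \subset W_{r,n}$ has codimension $2n - 2r + 3$ (cf.~Corollary~\ref{singularlocusWr} and Proposition~\ref{thm:localStructureOfW}), which exceeds the relevant cohomological degree $n - c$ in the regime of interest, so removing it does not affect cohomology in the required range.
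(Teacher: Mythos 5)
Your primary argument — choose a generic flag, apply the quasi-finite Hamm--L\^e/Goresky--MacPherson Lefschetz theorem one hyperplane at a time, and convert homotopy connectivity to cohomology via Hurewicz — is correct and is essentially the paper's proof. The only genuine difference is that the paper applies the Goresky--MacPherson theorem (cited as \cite[Thm.~p.150]{GM}) in its codimension-$c$ form directly to the quasi-finite map $\sigma\colon W^{\mathrm{sm}}_{r,n}\to \PP(\Sym^2V^\vee)$, which gives the $(\dim W_{r,n}-c)$-connectivity of the pair in a single step; your induction is harmless but unnecessary.

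However, the paragraph where you identify non-properness of $\sigma|_{W^{\mathrm{sm}}_{r,n}}$ as the ``main obstacle'' and then propose the excision workaround is misguided, and the workaround itself has a genuine gap. First, there is no obstacle: the Goresky--MacPherson theorem you quote is stated exactly for quasi-finite, not necessarily proper, maps from nonsingular quasi-projective varieties — handling non-properness is precisely what distinguishes it from the classical Lefschetz theorem — so the direct application is legitimate. Second, the excision transfer from $W_{r,n}$ to $W^{\mathrm{sm}}_{r,n}$ does \emph{not} preserve cohomology in the claimed range, because $W_{r,n}$ is singular. By Proposition~\ref{thm:localStructureOfW}, near a point of $\sigma^{-1}(Z_{r-2,n})$ the variety $W_{r,n}$ is \'etale-locally $C\times\AA^M$ with $C$ the affine cone over $\PP^{n-r+1}\times\PP^{n-r+1}$. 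The link of the cone vertex is a circle bundle over $\PP^{n-r+1}\times\PP^{n-r+1}$ with Euler class $h_1+h_2$, so its $H^2$ is $\ZZ$, and hence $H^3_{\{0\}}(C)\cong\ZZ\neq 0$ regardless of how large $\operatorname{codim}\operatorname{Sing}(W_{r,n})=\dim C$ is. Thus the local cohomology $H^i_{\operatorname{Sing}}(W_{r,n})$ does not vanish below degree $2\operatorname{codim}$ the way it would for a smooth ambient variety, and the restriction $H^i(W_{r,n})\to H^i(W^{\mathrm{sm}}_{r,n})$ need not be an isomorphism already in degree $3$ — which is exactly the degree you care about. (Moreover, the Goresky--MacPherson theorem applied to the \emph{singular} $W_{r,n}$ would carry a correction term depending on the singularity defect, not the clean bound $n-c$.) Discard the workaround and stick with the direct quasi-finite argument.
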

\begin{proof}

 The generalised Lefschetz theorem of Goresky--MacPherson \cite[Thm p.150]{GM} states that we have isomorphisms on the level of homotopy groups for low degrees.
 Combining this with the Hurewicz theorem gives the statement for cohomology groups.
\end{proof}


\begin{proposition}
\label{thm:compareBGOToX}
Let $L \subseteq \PP(\Sym²V^\vee)$ be a generic codimension $c$ subspace.
The homomorphisms
\begin{gather*}
H^j(\BGO(S)^\circ,\ZZ) \to H^*(L \times_{\PP(\Sym^2 V^\vee)} W_{r,n}^{\mathrm{sm}},\ZZ) \\
H^j(\BGO(S)^\circ,\ZZ/2) \to H^*(L \times_{\PP(\Sym^2 V^\vee)} W_{r,n}^{\mathrm{sm}},\ZZ/2)
\end{gather*}
are isomorphisms for $j < N$ and injective for $j = N$, where
\[
N = \min(2n-1, \dim W_{r,n} - c)
\]
\end{proposition}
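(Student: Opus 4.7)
My plan is to factor the homomorphism through $H^\ast(W_{r,n}^{\mathrm{sm}},\ZZ)$, writing it as the composition
\[
H^j(\BGO(S)^\circ,\ZZ) \xrightarrow{\alpha_j} H^j(W_{r,n}^{\mathrm{sm}},\ZZ) \xrightarrow{\beta_j} H^j(L\times_{\PP(\Sym^2 V^\vee)} W_{r,n}^{\mathrm{sm}},\ZZ)
\]
given by the two arrows already present in the construction of \eqref{eqn:pullbackHomomorphism}, and bounding the two arrows separately.

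For the first arrow $\alpha_j$: since $\Hom(V,S)$ is contractible, $H^\ast(\BGO(S)^\circ, \ZZ)$ is naturally $H^\ast_{\GO(S)^\circ}(\Hom(V,S),\ZZ)$. By Lemma \ref{thm:freeAction} and Corollary \ref{singularlocusWr}, $\GO(S)^\circ$ acts freely on $\Hom(V,S)\setminus \tau^{-1}(CZ_{r-2,n})$ with quotient $W_{r,n}^{\mathrm{sm}}$, which identifies the equivariant cohomology of this open subspace with the ordinary cohomology of $W_{r,n}^{\mathrm{sm}}$. Under these identifications $\alpha_j$ becomes pullback along an open $\GO(S)^\circ$-equivariant inclusion, and Lemma \ref{thm:cohomologyOfComplement} then supplies the ranges of isomorphism and injection in terms of the codimension of $\tau^{-1}(CZ_{r-2,n})$ in $\Hom(V,S)$ provided by Lemma \ref{BGlemma2}. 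The argument is identical with $\ZZ/2$ coefficients.

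For the second arrow $\beta_j$: this is ordinary pullback along the inclusion of a generic linear section $L\times_{\PP(\Sym^2 V^\vee)} W_{r,n}^{\mathrm{sm}}\hookrightarrow W_{r,n}^{\mathrm{sm}}$, to which Lemma \ref{GMLefschez} applies directly, yielding an isomorphism for $j<\dim W_{r,n}-c$ and injection at $j=\dim W_{r,n}-c$, both integrally and mod $2$.

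Finally I combine the two bounds by a diagram chase: $\beta_j\circ\alpha_j$ is an isomorphism when both factors are, and injective when one factor is injective and the other an isomorphism. Taking $N$ to be the smaller of the two thresholds then gives the asserted behaviour of the composition. The main point requiring care, rather than a serious obstacle, is to verify that the composition of the two natural identifications above literally matches the map of \eqref{eqn:pullbackHomomorphism}; this is a formal unwinding of definitions, using that open restriction commutes with the formation of both equivariant cohomology and free quotients.
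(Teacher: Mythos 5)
Your proposal is correct and is essentially the same as the paper's own proof, which reads in full ``Combine Lemmas \ref{BGlemma1}, \ref{thm:cohomologyOfComplement} and \ref{GMLefschez}''; your factorization through $H^*(W_{r,n}^{\mathrm{sm}},\ZZ)$ and the ensuing diagram chase are exactly the intended unwinding of that combination. One thing worth noting when you make the thresholds explicit: Lemma~\ref{thm:cohomologyOfComplement} gives $2n-2r+3$ for the first arrow (for $r=4$ this is $2n-5$), so the argument actually yields $N=\min(2n-2r+3,\dim W_{r,n}-c)$; the ``$2n-1$'' in the statement appears to be a typo, and the smaller value suffices for every application in the paper, all of which assume $n\ge 5$.
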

\begin{proof}
Combine Lemmas \ref{BGlemma1}, \ref{thm:cohomologyOfComplement} and \ref{GMLefschez}.
\end{proof}


\begin{corollary}
\label{thm:summaryOfHX}
If $c \le 4n-11$, then
\[
H^0(X^{\mathrm{sm}}_{n,c},\ZZ) = \ZZ, H^1(X^{\mathrm{sm}}_{n,c},\ZZ) = 0, H^2(X^{\mathrm{sm}}_{n,c},\ZZ) = \ZZ, H^3(X^{\mathrm{sm}}_{n,c},\ZZ) = \ZZ/2.
\]
If moreover $c \le 4n - 13$, then the square of the non-zero class in $H^3(X^{\mathrm{sm}}_{n,c},\ZZ)$ does not vanish modulo 2.
\end{corollary}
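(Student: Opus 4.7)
The plan is to deduce both assertions directly from Propositions~\ref{prop:cohomologyofGBO} and~\ref{thm:compareBGOToX}, applied with $r=4$ and $S=\CC^4$. The first step is to identify $X_{n,c}^{\mathrm{sm}}$ with $L\times_{\PP(\Sym^2V^\vee)}W_{4,n}^{\mathrm{sm}}$ for a generic codimension-$c$ linear subspace $L$: by Bertini together with Corollary~\ref{singularlocusWr}, the singularities of $X_{n,c}$ are exactly the points of $L\cap W_{4,n}$ lying over $\overline Z_{2,n}$. Since $\dim W_{4,n}=4n-7$, Proposition~\ref{thm:compareBGOToX} then says that the pullback
\[
H^j(\BGO(4)^\circ,A)\to H^j(X_{n,c}^{\mathrm{sm}},A)
\]
is an isomorphism for $j<N$ and injective for $j=N$, where $N=\min(2n-1,\,4n-7-c)$, and this holds for both $A=\ZZ$ and $A=\ZZ/2$.

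For the first claim, I would assume $c\le 4n-11$; then $4n-7-c\ge 4$, and since $n\ge 4$ we also have $2n-1\ge 7$, so $N\ge 4$. Hence the pullback is an isomorphism in degrees $0,1,2,3$, and reading off values from Proposition~\ref{prop:cohomologyofGBO} yields the stated cohomology of $X_{n,c}^{\mathrm{sm}}$.

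For the second claim, assume $c\le 4n-13$; then $4n-7-c\ge 6$ and again $2n-1\ge 7$, so $N\ge 6$ and the mod $2$ pullback is injective in degree $6$. Let $\gamma\in H^3(\BGO(4)^\circ,\ZZ)$ be the generator and $\alpha\in H^3(X_{n,c}^{\mathrm{sm}},\ZZ)$ its image under the pullback (which, by the first claim, is an isomorphism in degree $3$). Proposition~\ref{prop:cohomologyofGBO} says that $\gamma^2\bmod 2$ is non-zero in $H^6(\BGO(4)^\circ,\ZZ/2)$; since pullback is a ring homomorphism compatible with reduction modulo $2$, the class $\alpha^2\bmod 2$ is the image of $\gamma^2\bmod 2$, and the injectivity in degree $6$ forces it to be non-zero.

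The argument has no genuine obstacle: it is simply a matter of combining the two preceding propositions and checking that the numerical hypotheses $c\le 4n-11$ and $c\le 4n-13$ land us safely in the stable range of Proposition~\ref{thm:compareBGOToX}. The only point that requires some care is the identification of $X_{n,c}^{\mathrm{sm}}$ with the open part of $L\cap W_{4,n}$ on which $\GO(4)^\circ$ acts freely, so that the classifying-space pullback is well-defined and naturality of cup products with respect to it is automatic.
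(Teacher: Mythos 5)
Your proposal is correct and follows the same route as the paper: identify $X_{n,c}^{\mathrm{sm}}$ with $L\times_{\PP(\Sym^2 V^\vee)}W_{4,n}^{\mathrm{sm}}$ via Bertini, then read off the low-degree cohomology from Propositions~\ref{prop:cohomologyofGBO} and~\ref{thm:compareBGOToX}, and for the square invoke the ring-homomorphism property of the pullback $\phi$ together with injectivity in degree $6$. The only difference from the paper's proof is that you spell out the numerical verification of $N\ge 4$ and $N\ge 6$, which the paper leaves implicit.
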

\begin{proof}
Since $X_{n,c} = L \times_{\PP(\Sym^2 V^\vee)} W_{4,n}$ for a generic codimension $c$ subspace $L$, Bertini's theorem implies $X_{n,c}^{\mathrm{sm}} = L \times_{\PP(\Sym^2 V^\vee)} W_{4,n}^{\mathrm{sm}}$.

The first claim then follows from Propositions \ref{prop:cohomologyofGBO} and \ref{thm:compareBGOToX}.
For the second claim, let $\phi \colon H^*(\BGO(4)^\circ, \ZZ) \to H^*(X^{\mathrm{sm}}_{n,c}, \ZZ)$ be the natural homomorphism.
Taking $0 \not= \gamma \in H^3(\BGO(4)^\circ,\ZZ)$, we know from Proposition \ref{prop:cohomologyofGBO} that $\gamma^2 \not= 0 \pmod 2$.
If $c \le 4n-13$, Proposition \ref{thm:compareBGOToX} implies that the map
\[
H^6(\BGO(4)^\circ, \ZZ/2) \to H^6(X^{\mathrm{sm}}_{n,c},\ZZ/2)
\]
is injective, and it follows that $\phi(\gamma)^2 = \phi(\gamma^2) \not= 0 \pmod 2$.
\end{proof}

\begin{remark}\label{remark:Brauer}
When $X_{n,c}$ is smooth and rationally connected, the torsion subgroup of $H^3(X_{n,c},\ZZ)$ 
can be identified with the Brauer group 
$\Br(X_{n,c})$. Under this identification, the generator of $H^3(X_{n,c},\ZZ)\simeq \ZZ/2$ is represented by Brauer-Severi variety given by the restriction of $\eta : U_{4,n}\to W_{4,n}$ to $X_{n,c}$.
\end{remark}

\section{The varieties $X_{n,c}$}\label{specialexamples}
We now analyse a few particularly interesting choices of $n$ and $c$.
\subsection{The case $c = 2n-1$}
Let $X = X_{n, 2n-1}$.
Collecting our work, we have now proved Theorem \ref{mainthm1}, restated more precisely as follows.
\begin{theorem}
\label{thm:mainthm1PreciseVersion}
The variety $X$ is nonsingular of dimension $2n-6$ with $K_X = -H$, and hence Fano.
It has Picard number 1 and $H^3(X,\ZZ) = \ZZ/2$.
\end{theorem}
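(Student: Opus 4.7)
The plan is to verify each assertion of the theorem as a direct consequence of material already developed, with no new calculations required.

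First, I would settle the dimension and smoothness. The dimension follows from $\dim W_{4,n} = 4n-7$ (Proposition \ref{thm:propertiesOfZ}) and the fact that $X$ is the intersection of $W_{4,n}$ with $2n-1$ generic members of $|H|$, giving $\dim X = 2n-6$. For smoothness, Corollary \ref{singularlocusWr} identifies the singular locus of $W_{4,n}$ with $\sigma^{-1}(\overline{Z}_{2,n})$, which has dimension equal to $\dim \overline{Z}_{2,n} = 2n-2$ by the formula in Proposition \ref{thm:propertiesOfZ}. Since $2n-1 > 2n-2$, a generic complete intersection of $2n-1$ elements of the base-point-free linear system $|H|$ avoids this singular locus entirely, and standard Bertini applied on $W_{4,n}^{\mathrm{sm}}$ then produces a smooth $X$.

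Second, I would compute $K_X$ via iterated adjunction. Using $-K_{W_{4,n}} = 2nH$ from Proposition \ref{propertiesofWr}, one finds
\[
K_X = (K_{W_{4,n}} + (2n-1)H)\resto{X} = (-2nH + (2n-1)H)\resto{X} = -H\resto{X}.
\]
Ampleness of $H\resto{X}$ follows because $H$ is the pullback of $\OO_{\overline{Z}_{4,n}}(1)$ along the finite map $\sigma$, hence ample; so $-K_X$ is ample and $X$ is Fano.

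Third, the cohomological statements should follow directly from Corollary \ref{thm:summaryOfHX}. The numerical hypothesis $c \le 4n-11$ there translates to $n \ge 5$ in our setting, which is precisely the range in which the dimension $d = 2n-6$ satisfies $d \ge 4$. Because $X$ is smooth, we identify $X$ with $X^{\mathrm{sm}}_{n,2n-1}$, and the corollary yields $H^2(X,\ZZ) = \ZZ$ and $H^3(X,\ZZ) = \ZZ/2$. Finally, for Picard rank one, Kodaira vanishing on the Fano variety $X$ gives $H^1(X,\OO_X) = 0$, so the exponential exact sequence provides an injection $\Pic(X) \hookrightarrow H^2(X,\ZZ) \cong \ZZ$; since $H\resto{X}$ is a non-zero class, $\Pic(X) \cong \ZZ$.

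No step presents a substantive obstacle: the heavy lifting sits in the cohomology computations of Section \ref{cohomologycomputations} and in the local structure results of Section \ref{symmetricdeterminantal}. The role of this theorem is essentially to check that the numerics are aligned so that the Lefschetz-type conclusion of Corollary \ref{thm:summaryOfHX} applies in exactly the degrees we need, and that the adjunction computation produces an ample anticanonical class.
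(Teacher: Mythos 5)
Your proposal is correct and follows essentially the same route as the paper's proof: dimension and smoothness via the codimension count for $\sigma^{-1}(\overline Z_{2,n})$ and Bertini, $K_X = -H$ via adjunction from $-K_{W_{4,n}} = 2nH$, and the cohomology groups via Corollary \ref{thm:summaryOfHX} with the check $c = 2n-1 \le 4n-11 \iff n \ge 5$. Your treatment of the Picard-rank claim (Kodaira vanishing plus the exponential sequence and $H^2(X,\ZZ) = \ZZ$) is in fact slightly more explicit than the paper, which leaves this step implicit.
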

\begin{proof}
The singular locus in $W_{4,n}$ has dimension $2n-2$ by Proposition~\ref{thm:propertiesOfZ} and Corollary~\ref{singularlocusWr}, so $X$ is nonsingular by Bertini's theorem.
The proof of Lemma \ref{ikkeFano} gives $K_X = -H$.
Finally, $H^3(X,\ZZ)$ is computed in Corollary~\ref{thm:summaryOfHX}.
\end{proof}

\subsection{The Fano fourfold}\label{defineFano4fold}
Specialising further, let $X = X_{5,9}$.
The strata $Z_{i,5}$ were described in Example \ref{ex:Vdim5}. The quintic hypersurface ${\overline Z}_{4,5}\subset\PP^{14}$ parameterizes singular quadrics in  5 variables, i.e., cones over quadrics in $\PP^3$.  The double cover $W_{4,n}\to {\overline Z}_{4,5}$ is ramified along the  $\overline Z_{2,5}\subset \PP^{14}$, which is a singular 8-fold.

\begin{proposition}
    The fourfold $X$ is a Fano variety with invariants
    \begin{enumerate}
    \item $\Pic(X)=\ZZ H$, with $H^4=10$.
        \item $-K_X=H$
        \item $h^{1,3}(X)=9$, $h^{2,2}(X)=67$.
        \item $H^3(X,\ZZ)=\ZZ/2$.
    \end{enumerate}
\end{proposition}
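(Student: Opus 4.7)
The proposition breaks into four pieces, of increasing difficulty.

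For (2) and the $H^4 = 10$ part of (1), the idea is to combine Proposition~\ref{propertiesofWr} with adjunction. Since $X = W_{4,5}\cap H_1\cap\cdots\cap H_9$ is a codimension-$9$ linear section in $|H|$ of a Gorenstein variety with $-K_{W_{4,5}} = 10H$, adjunction gives $K_X = (-10 + 9)H = -H$ immediately, and ampleness of $H$ makes $X$ Fano. For the degree, $\sigma$ has degree $2$ and $\overline{Z}_{4,5}\subset\PP^{14}$ is a quintic hypersurface, so $H^{13} = 10$ on $W_{4,5}$, whence $H^4 = 10$ on $X$.

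For the Picard and $H^3$ claims in (1) and (4), I invoke Corollary~\ref{thm:summaryOfHX} with $n = 5$ and $c = 9 = 4n - 11$. Smoothness of $X$ is needed; it follows because $\dim\sing(W_{4,5}) = \dim\sigma^{-1}(\overline{Z}_{2,5}) = 8 < 9$, so a generic codimension-$9$ linear section avoids the singular locus. The corollary then yields $H^2(X,\ZZ) = \ZZ$ and $H^3(X,\ZZ) = \ZZ/2$. To promote $H^2(X,\ZZ) = \ZZ$ to $\Pic(X) = \ZZ\cdot H$, use Kodaira vanishing $H^1(X,\OO_X) = 0$ to identify $\Pic(X) \hookrightarrow H^2(X,\ZZ)$, and observe that under the isomorphism $H^2(\BGO(4)^\circ,\ZZ) \iso H^2(X,\ZZ)$ coming from Proposition~\ref{thm:compareBGOToX}, the class $H$ corresponds to the generator (the character $\chi$ used in the GIT linearisation), so it is primitive.

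The Hodge numbers in (3) are the substantive part. Kodaira vanishing gives $h^{i,0} = 0$ for $i > 0$; combined with $\Pic(X) = \ZZ$ and $b_3(X) = 0$ (since $H^3$ is torsion), the only unknown Hodge numbers are $h^{1,3} = h^{3,1}$ and $h^{2,2}$, related by
\[
\chi_{\mathrm{top}}(X) = 4 + 2h^{1,3} + h^{2,2}, \qquad \chi(X,\Omega^1_X) = -1 - h^{1,3}.
\]
Both are computed by Hirzebruch--Riemann--Roch once the Chern classes of $T_X$ are known. These come from the adjunction sequence
\[
0 \to T_X \to T_{W_{4,5}}|_X \to \OO_X(H)^{\oplus 9} \to 0,
\]
and $c(T_{W_{4,5}^{\mathrm{sm}}})$ in turn is determined by the $\GO(4)^\circ$-equivariant presentation of Section~\ref{symmetricasGIT}. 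Alternatively, and more efficiently, one may work on the smooth resolution $\widetilde X = \eta^{-1}(X) \subset U_{4,5}$. Since $X$ lies in the smooth locus of $W_{4,5}$, $\widetilde X \to X$ is the Brauer--Severi $\PP^1$-bundle associated with the nontrivial class in $\Br(X)$ (Remark~\ref{remark:Brauer}). Using $h^{p,q}(\widetilde X) = h^{p,q}(X) + h^{p-1,q-1}(X)$, the task reduces to Hodge numbers of $\widetilde X$, a smooth codimension-$9$ complete intersection of divisors in $|\OO_{U_{4,5}}(1)|$ in the $\PP^8$-bundle $U_{4,5} = \PP_{\Gr(3,5)}(\E)$.

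The main obstacle is the explicit intersection-theoretic calculation: once the approach is fixed, one must evaluate $\chi_{\mathrm{top}}$ and $\chi(\Omega^1)$ as polynomials in Chern classes against the fundamental class, which reduces to a tractable but tedious computation in the Chow ring of $\Gr(3,5)$ using the Koszul resolution of $\widetilde X$ in $U_{4,5}$ together with the splitting principle for $\E$. Solving the resulting linear system then yields $h^{1,3}(X) = 9$ and $h^{2,2}(X) = 67$.
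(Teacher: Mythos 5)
Your treatment of parts (1), (2) and (4) matches the paper's, which simply cites Theorem~\ref{thm:mainthm1PreciseVersion}; your additional remarks on the degree $H^4 = 10$ (via the degree-$2$ cover of the degree-$5$ hypersurface $\overline Z_{4,5}$) and on the primitivity of $H$ in $\Pic(X)$ (via the identification of $H^2(X,\ZZ)$ with $H^2(\BGO(4)^\circ,\ZZ)=\ZZ\cdot c_1(\chi)$) are correct and in fact a bit more explicit than the paper is about those two points.

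For part (3), however, you take a genuinely different route, and this is where the proposal falls short. The paper deduces the Hodge numbers from homological projective duality: Proposition~\ref{thm:SODofX} gives the decomposition $D(X) = \langle D(S), E_1,\ldots,E_4\rangle$ with $S$ a smooth surface of general type, Lemma~\ref{surface-hodgenumbers} computes $h^{0,2}(S)=9$, $h^{1,1}(S)=65$ by a direct Noether-formula argument on the \'etale double cover $T\to S$, and Corollary~\ref{HPD-hodgenumbers} transfers these to $X$ via Hochschild homology, giving $h^{1,3}(X)=h^{0,2}(S)=9$ and $\sum h^{i,i}(X)=\sum h^{i,i}(S)+4$, hence $h^{2,2}(X)=67$. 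Your plan is instead a Hirzebruch--Riemann--Roch computation: your reduction of the unknowns to $h^{1,3}$ and $h^{2,2}$, the identities $\chi_{\rm top}(X)=4+2h^{1,3}+h^{2,2}$ and $\chi(\Omega^1_X)=-1-h^{1,3}$, the passage to the conic bundle $\widetilde X=\eta^{-1}(X)\subset U_{4,5}=\PP_{\Gr(3,5)}(\E)$, and the relation $h^{p,q}(\widetilde X)=h^{p,q}(X)+h^{p-1,q-1}(X)$ (valid even though $\widetilde X\to X$ is only an \'etale-locally-trivial $\PP^1$-bundle, since the decomposition of $R\eta_*\CC$ holds with rational coefficients) are all correct. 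But you stop at ``a tractable but tedious computation in the Chow ring of $\Gr(3,5)$\ldots Solving the resulting linear system then yields $h^{1,3}(X)=9$ and $h^{2,2}(X)=67$.'' That computation is the entire substance of part (3), and you have asserted its outcome without performing it. So the approach is a legitimate alternative to the paper's HPD argument, but as written the proposal contains a gap precisely at the only nontrivial step of (3); you would need to actually evaluate the Chern numbers of $\widetilde X$ (e.g.\ via the Koszul resolution of $\widetilde X$ in $\PP(\E)$ and the Chern classes of $\E$) to close it. It is worth noting that the paper's HPD route, besides being shorter, also yields $h^{1,2}(X)=h^{0,1}(S)=0$ as an independent check of $b_3(X)=0$, rather than taking it as input.
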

\begin{proof}
(1), (2) and (4) follow from Theorem \ref{thm:mainthm1PreciseVersion}. 
(3) follows from Lemma \ref{surface-hodgenumbers} and Corollary \ref{HPD-hodgenumbers} below.
\end{proof}


\subsubsection{Homological projective duality}
In the paper \cite{Rennemo_2020}, the second named author studies derived categories of linear sections of the stack $\Sym^2 \PP^{n-1}$ from the perspective of homological projective duality \cite{kuznetsovHomologicalProjectiveDuality2007a}.
When $n$ is odd, the paper defines a noncommutative resolution $Y_n$ of $W_{n-1,n}$, and shows that linear sections of this noncommutative resolution are related to dual linear sections of $\Sym^2\PP^{n-1}$ in precisely the way predicted by HP duality, which strongly suggest that $Y_n$ is HP dual to $\Sym^2 \PP^{n-1}$.

Specialising to the case $n = 5$ and linear sections of the appropriate dimensions gives the following result.
Let $V = \CC^5$, let $L_1, \ldots, L_9$ be general hyperplanes in $\PP(\Sym^2 V)$, and let $L$ be their intersection.
Let 
\[
L^\perp = \langle L_1,\ldots, L_9 \rangle \subset \PP(\Sym^2 V^\vee)
\]
be the orthogonal complement.

In this language, $X = L \times_{\PP(\Sym^2 V)} W_{4,5}$.
Since $X$ avoids the singular locus of $W_{4,5}$, the noncommutative resolution $Y_5$ of $W_{4,5}$ is equivalent to $W_{4,5}$, and the main theorem of \cite{Rennemo_2020} applies.
On the other side of the HP duality we find
\begin{equation}
\label{eqn:definitionOfS}
S = L^\perp \times_{\PP(\Sym^2 V^\vee)} \Sym^2\PP(V^\vee),
\end{equation}
which is the intersection of 6 general $(1,1)$-divisors in $\Sym^2 \PP(V^\vee) = \Sym^2 \PP^4$.
The following is a slight amplification of the main result of \cite{Rennemo_2020}.
\begin{proposition}
\label{thm:SODofX}
The category $D(X)$ admits a semiorthogonal decomposition
\[
D(X) = \langle D(S), E_1, E_2, E_3, E_4 \rangle,
\]
where the $E_i$ are exceptional objects.
\end{proposition}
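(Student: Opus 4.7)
The plan is to apply the homological projective duality (HPD) theorem of \cite{Rennemo_2020}, which identifies the noncommutative resolution $Y_5$ of $W_{4,5}$ as HP dual to $\Sym^2\PP^4$ with respect to the natural embeddings in $\PP(\Sym^2 V)$ and $\PP(\Sym^2 V^\vee)$. First I would verify that $X$ lies in the smooth locus of $W_{4,5}$, so that the noncommutative structure of $Y_5$ is trivial along $X$ and may be replaced by the ordinary derived category $D(X)$. By Proposition \ref{thm:propertiesOfZ} and Corollary \ref{singularlocusWr}, $\Sing(W_{4,5}) = \sigma^{-1}(\overline Z_{2,5})$ has dimension $\dim Z_{2,5} = 8$ inside the $13$-dimensional $W_{4,5}$, hence codimension $5$, and a generic codimension-$9$ linear section avoids it.

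Next I would invoke the main HPD theorem of \cite{Rennemo_2020}: for a linear section $X = L \cap Y_5$ of codimension $c$, there is a semiorthogonal decomposition of $D(X)$ containing $D(L^\perp \cap \Sym^2\PP^4)$ as a block---that is, $D(S)$ from \eqref{eqn:definitionOfS}---with the remaining blocks determined by the Lefschetz decomposition of $D(Y_5)$ relative to $\OO(H)$. Applied in our situation with $c = 9$, this yields an SOD of the form
\[
D(X) \;=\; \langle D(S),\, A_9(9),\, A_{10}(10),\, \ldots,\, A_{m-1}(m-1)\rangle,
\]
where $A_0 \supseteq A_1 \supseteq \cdots \supseteq A_{m-1}$ is the Lefschetz decomposition of $D(Y_5)$ and $m$ its length.

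The main obstacle, and what ``slight amplification'' refers to, is showing that the residual part on the right reduces to exactly four exceptional objects $E_1,\ldots,E_4$. This requires identifying the Lefschetz components $A_9,\ldots,A_{m-1}$ explicitly and checking that each survives on $X$ as a single exceptional object. The count of four is consistent with the dimension bookkeeping $\dim X = 4$, $\dim S = 2$ via the standard numerical constraints on HPD decompositions; making it rigorous amounts to a careful reading of the Lefschetz structure of $Y_5$ as constructed in \cite{Rennemo_2020}, whose building blocks originate from the GIT description of $W_{4,5}$ via the $\GO(4)^\circ$-action on $\Hom(V,S)$ worked out in Section \ref{symmetricasGIT}.
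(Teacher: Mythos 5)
Your proposal follows the same route as the paper: apply the HPD theorem of \cite{Rennemo_2020}, use the fact that a generic codimension-$9$ linear section of $W_{4,5}$ misses the $8$-dimensional singular locus so that the noncommutative resolution $Y_5$ coincides with $D(X)$ there, and observe that the residual part of the decomposition should consist of four exceptional objects coming from the Lefschetz structure. The paper likewise states this as a ``slight amplification'' of \cite{Rennemo_2020} without writing out the details; your dimension bookkeeping and sketch are consistent with what the authors leave to the reader.
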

The amplification consists in the fact that \cite{Rennemo_2020} only proves that $D(S)$ includes as a semiorthogonal piece in $D(X)$.
The fact that the orthogonal complement is generated by 4 exceptional objects is not difficult to show using the techniques of the paper.

\begin{lemma}\label{surface-hodgenumbers}
The surface $S$ in \eqref{eqn:definitionOfS} is smooth of degree 35 with respect to the embedding $S \subset \PP(\Sym^2 V^\vee)$.
It has Hodge numbers 
\[
h^{1,0}(S)=0, h^{2,0}(S)=9, h^{1,1}(S)=65.
\]
\end{lemma}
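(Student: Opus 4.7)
The plan is to pass to an unramified double cover of $S$ inside $\PP^4 \times \PP^4$ and compute there. Let $p \colon \PP(V^\vee) \times \PP(V^\vee) \to \Sym^2 \PP(V^\vee) = \overline{Z}_{2,5}$ be the swap quotient, with $p^*\OO(1) = \OO(1,1)$ and branch locus the diagonal (whose image is $\overline{Z}_{1,5}$). Since $\dim \overline{Z}_{1,5} = 4$, a generic codimension-$6$ linear subspace $L^\perp \subset \PP(\Sym^2 V^\vee)$ misses $\overline{Z}_{1,5}$, so $S = \overline{Z}_{2,5} \cap L^\perp$ lies in the smooth locus; Bertini then gives $S$ smooth, and the degree claim is immediate from $\deg \overline{Z}_{2,5} = 35$. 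A second dimension count shows that $\tilde S := p^{-1}(S)$ avoids the diagonal, so $p|_{\tilde S} \colon \tilde S \to S$ is étale of degree $2$, and $\tilde S$ is a transverse intersection of six general $(1,1)$-divisors in $\PP^4 \times \PP^4$.

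The invariants of $\tilde S$ are now standard. The Koszul resolution on $\PP^4 \times \PP^4$, combined with $\chi(\OO_{\PP^4}(-k))$ equalling $1,0,0,0,0,1,5$ for $k = 0,\ldots,6$, yields
\[
\chi(\OO_{\tilde S}) \;=\; \sum_{k=0}^{6} (-1)^k \binom{6}{k} \chi(\OO_{\PP^4}(-k))^2 \;=\; 1 - 6 + 25 \;=\; 20,
\]
while adjunction gives $K_{\tilde S} = (H_1+H_2)|_{\tilde S}$, so $K_{\tilde S}^2 = (H_1+H_2)^8 = \binom{8}{4} = 70$. The Lefschetz hyperplane theorem for complete intersections of ample divisors forces $H^1(\tilde S,\CC) = H^1(\PP^4 \times \PP^4,\CC) = 0$, hence $h^{1,0}(\tilde S) = 0$.

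Finally, since $p|_{\tilde S}$ is étale of degree $2$, one has $\chi(\OO_S) = 10$, $K_S^2 = 35$, and $h^{1,0}(S) \le h^{1,0}(\tilde S) = 0$; thus $h^{2,0}(S) = \chi(\OO_S) - 1 = 9$. Noether's formula then gives $c_2(S) = 12\chi(\OO_S) - K_S^2 = 85$, so together with $b_1(S) = b_3(S) = 0$ we obtain $b_2(S) = 83$ and $h^{1,1}(S) = 83 - 2 \cdot 9 = 65$. The only delicate points are the two dimension counts, ensuring that $S$ lies in the smooth locus of $\overline{Z}_{2,5}$ and that $\tilde S$ avoids the diagonal; the rest of the calculation is routine.
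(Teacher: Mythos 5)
Your proof is correct and follows essentially the same route as the paper: pass to the étale double cover $T \subset \PP^4\times\PP^4$ (your $\tilde S$) cut out by six symmetric $(1,1)$-divisors, compute $\chi(\OO_T)=20$, $K_T^2=70$, and $h^{1,0}(T)=0$, then descend to $S$ via the degree-$2$ étale cover and apply Noether's formula. The only cosmetic difference is that you compute $\chi(\OO_T)$ directly from the Koszul resolution, whereas the paper records $h^0(K_T)=19$ and uses simple connectedness of $T$; these are the same computation. You also spell out the dimension counts showing $S$ avoids $\overline{Z}_{1,5}$ (so $S$ is in the smooth locus and the cover is unramified), which the paper leaves implicit.
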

\begin{proof}
    The map $\PP^4\times \PP^4\to \Sym^2(\PP^4)$ induces an étale double cover $\pi:T\to S$ where $T$ is a general complete intersection of 6 symmetric $(1,1)$-divisors in $\PP^4 \times \PP^4$.
    In particular, $T$ is simply connected by the Lefschetz theorem. Furthermore, we find that $K_T=\OO_{\PP^4\times \PP^4}(1,1)|_T$, which gives $\dim H^0(T,K_T) = 19$ and $K_T^2=70$. 
    From Noether's formula, $\chi(\OO_S)=1+h^{2,0}(S)=(1+19)/2=10$, giving $h^{2,0}(S)=9$. As $\pi$ is étale of degree 2, $K_T=\pi^*K_S$,  $K_S^2=35$ and hence $\chi_{\mathrm{top}}(S)=85$ by Noether's formula. From this we find that $h^{1,1}(S)=65$.
\end{proof}

\begin{corollary}\label{HPD-hodgenumbers}
With $S$ and $X$ as above, we have
\[
\sum_{i = 0}^4 h^{i,i}(X) = \sum_{i=0}^2h^{i,i}(S) + 4,
\]
\[
h^{1,2}(X) = h^{0,1}(S)
\]
and
\[
h^{1,3}(X) = h^{0,2}(S)
\]
\end{corollary}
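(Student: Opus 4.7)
The plan is to use the additivity of Hochschild homology under semiorthogonal decompositions, combined with the Hochschild--Kostant--Rosenberg isomorphism, to extract the desired Hodge-number equalities from Proposition \ref{thm:SODofX}.

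By a theorem of Kuznetsov, a semiorthogonal decomposition $D(X) = \langle D(S), E_1, E_2, E_3, E_4\rangle$ induces a direct sum decomposition $HH_n(X) \cong HH_n(S) \oplus \bigoplus_i HH_n(\langle E_i \rangle)$ for every $n$. Since each $E_i$ is exceptional, $HH_0(\langle E_i\rangle) = \CC$ and $HH_n(\langle E_i\rangle) = 0$ for $n \neq 0$. Thus $HH_0(X) = HH_0(S) \oplus \CC^4$ and $HH_n(X) = HH_n(S)$ for $n \neq 0$. Via HKR, $HH_n(X) = \bigoplus_{q-p=n} H^q(X,\Omega^p_X)$, and similarly for $S$.

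For $n=0$, the first identity $\sum_i h^{i,i}(X) = \sum_i h^{i,i}(S) + 4$ is immediate. For $n = 1, 2$, I will simplify using that $X$ is a smooth Fano 4-fold, so Kodaira vanishing gives $H^i(X,\OO_X) = 0$ for $i>0$, hence $h^{0,i}(X) = h^{i,0}(X) = 0$ for $i = 1, 2$; together with Serre duality on $X$ this forces $h^{3,4}(X) = h^{1,0}(X) = 0$ and $h^{2,4}(X) = h^{2,0}(X) = 0$. On $S$, Serre duality gives $h^{1,2}(S) = h^{0,1}(S)$.

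Applying the isomorphism for $n=1$ then reads
\[
h^{0,1}(X) + h^{1,2}(X) + h^{2,3}(X) + h^{3,4}(X) = h^{0,1}(S) + h^{1,2}(S),
\]
and using Serre duality $h^{2,3}(X) = h^{2,1}(X) = h^{1,2}(X)$ on the left together with $h^{1,2}(S) = h^{0,1}(S)$ on the right yields $2h^{1,2}(X) = 2h^{0,1}(S)$, giving the second claim. For $n = 2$ we obtain
\[
h^{0,2}(X) + h^{1,3}(X) + h^{2,4}(X) = h^{0,2}(S),
\]
which collapses to $h^{1,3}(X) = h^{0,2}(S)$, the third claim. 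The only non-routine step is invoking Kuznetsov's HH-additivity and HKR, both of which are standard; the rest is bookkeeping with Hodge symmetry and Serre duality.
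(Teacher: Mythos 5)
Your proof is correct and follows essentially the same route as the paper: additivity of Hochschild homology under semiorthogonal decomposition, the HKR identification of $\dim HH_n$ with sums of Hodge numbers along antidiagonals, and the vanishing $h^{0,i}(X)=0$ for Fano $X$ (together with Hodge symmetry and Serre duality). You simply spell out the bookkeeping with $h^{2,3}(X)$, $h^{3,4}(X)$, $h^{2,4}(X)$, and $h^{1,2}(S)$ that the paper leaves implicit.
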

\begin{proof}
The semiorthogonal decomposition in Proposition \ref{thm:SODofX} gives the relation of Hochschild homology groups
\[
HH_*(X) \cong HH_*(S) \oplus \CC^4.
\]
Expressing Hochschild homology via Hodge numbers through
\[
\dim HH_i(-) = \sum_{p-q = i} h^{p,q}(-),
\]
and using the fact that $h^{0,i}(X) = 0$ since $X$ is Fano gives the result.
\end{proof}

\begin{example}\label{fourfoldBrauergroup}
The fact that $\Tors H^3(X,\ZZ)\neq 0$ can be seen as a consequence of the fact that the conic bundle $\eta : U_{4,5}\to W_{4,5}$ does not admit a rational section.

To see this, recall that $U_{4,5}$ is a projective bundle over the Grassmannian $G=\Gr(3,V)$. Explicitly, $U_{4,5}=\PP(E)$ where $E$ is the rank 9 vector bundle appearing as the kernel of the natural map $S^2(V^\vee\otimes \OO_G)\to S^2(\mathcal U^\vee)$, and where $\mathcal U$ is the universal subbundle of rank $3$. Now, if $D\subset \PP(E)$ is the divisor determined by a rational section of $\eta$, $D$ is linearly equivalent to a divisor of the form $aL+bG$, where $L=\OO_{\PP(E)}(1)$ and $G$ is the pullback of $\OO_{\Gr(3,V)}(1)$. We must also have $D\cdot L^{13}=10$ (as the 1-cycle $L^{13}$ is represented by 10 fibers of $\PP(E)\to W_{4,5}$). On the other hand, using the Chern classes of $S^2(\mathcal U^\vee)$, we compute that $D\cdot L^{13}=-20b$, contradicting the condition that $b$ is an integer. 

This shows that the Brauer group of $W_{4,5}^{\mathrm{sm}}$ is non-trivial. In our case, we may identify the Brauer group with $\Tors H^3(W_{4,5}^{\mathrm{sm}},\ZZ)$ because $H^2(W_{4,5}^{\mathrm{sm}},\ZZ)=\ZZ$ is generated by algebraic classes \cite[Proposition 4]{beauville2016luroth}. Finally, Lemma \ref{GMLefschez} shows that $H^3(W^{\mathrm{sm}}_{4,5},\ZZ)\to H^3(X,\ZZ)$ is an isomorphism, so the latter group has non-trivial torsion part as well. 

For an alternative approach to the absence of rational sections, see Claim \ref{c1} in the Appendix.



\end{example}

\subsection{The case $c=2n-2$}\label{isolatedsingcase}
Let $X = X_{n, 2n-2}$.
Then $X$ has dimension $2n-5$, isolated singularities in $\sigma^{-1}(\overline Z_{2,n}) \cap X$, and $K_X = -2H$.
Let $\widetilde X\to X$ be the blow-up at the singular points. 
Then the exceptional divisor $E$ is a disjoint union of components $E_1,\ldots, E_s$, all of which are isomorphic to $\PP^{n-3} \times \PP^{n-3}$, by Proposition~\ref{thm:localStructureOfW}.

By Corollary \ref{thm:summaryOfHX}, we have $H^3(X^{\mathrm{sm}},\ZZ) = \ZZ/2$.
Since $X^{\mathrm{sm}} \simeq \widetilde X - E$, we get a pullback map $H^3(\widetilde X,\ZZ) \to H^3(X^{\mathrm{sm}},\ZZ)$.
This map is an isomorphism by the exact sequence
\begin{equation*}
H^1(E,\ZZ)\to H^3(\widetilde X,\ZZ)\to H^3(X \setminus E,\ZZ)\to H^2(E,\ZZ),
\end{equation*}
using also that $H^1(E,\ZZ) = 0$ and $H^2(E,\ZZ)$ is torsion free.
\begin{proposition}
\label{thm:higherDimArtinMumford}
    For each $n\ge 4$, $\widetilde X$ is a smooth  projective variety of dimension $2n-5$ with $$\Tors H^3(\widetilde X,\ZZ)\neq 0.$$ 
    The variety $\widetilde X$ is unirational, but not stably rational.
\end{proposition}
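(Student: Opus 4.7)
The proposition has four assertions: smoothness and dimension of $\widetilde X$, nonzero torsion in $H^3(\widetilde X,\ZZ)$, unirationality, and failure of stable rationality. Smoothness and the torsion claim follow from what has already been set up. By Proposition \ref{thm:localStructureOfW}, étale locally near each singular point of $X$ the variety is isomorphic to the affine cone on the Segre variety $\PP^{n-3} \times \PP^{n-3}$ times an affine space, and the blow-up of such a cone at its vertex is the total space of $\OO(-1)$ over $\PP^{n-3} \times \PP^{n-3}$, which is smooth. Hence $\widetilde X$ is smooth of dimension $2n-5$. The nonzero torsion in $H^3(\widetilde X,\ZZ)$ was established in the discussion immediately preceding the proposition, combining the exact sequence there with Corollary \ref{thm:summaryOfHX}.

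The main step is unirationality, for which I would use the incidence variety $U_{4,n}$ and its projection $\pi_1 \colon U_{4,n} \to \Gr(n-2, V)$. The key geometric input is that any quadric $Q$ in $\PP(V)$ vanishing on an $(n-3)$-plane $\PP(L)$ automatically has rank at most $4$: if the rank were $r \ge 5$, then the maximal projective isotropic subspaces of $Q$ would have dimension $n - 1 - \lceil r/2 \rceil \le n - 4$, contradicting the existence of the $(n-3)$-plane $\PP(L) \subset Q$. Consequently the fiber of $\pi_1$ over $[L]$ equals the entire linear space of quadrics vanishing on $\PP(L)$, which is a $\PP^{2n-2}$ sitting inside $\PP(\Sym^2 V^\vee)$.

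Now let $B = \sigma(X) \subset \overline{Z}_{4,n}$ denote the linear section cut out by the $2n-2$ hyperplanes defining $X$, and set $U_B := \pi_2^{-1}(B) = \eta^{-1}(X)$. The restriction $\pi_1|_{U_B} \colon U_B \to \Gr(n-2, V)$ has generic fiber equal to the intersection of the $\PP^{2n-2}$ above with the $2n-2$ hyperplanes defining $X$, which for sufficiently general choices is a single reduced point. Since $\dim U_B = \dim X + 1 = 2n-4 = \dim \Gr(n-2, V)$, the map $\pi_1|_{U_B}$ is birational, so $U_B$ is rational. The natural morphism $U_B \to X$ via $\eta$ is surjective, so $X$, and hence $\widetilde X$, is unirational.

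Failure of stable rationality follows from the standard Artin--Mumford argument \cite{artinmumford}: $\Tors H^3(-,\ZZ)$ is a stable birational invariant of smooth projective varieties, and $\Tors H^3(\widetilde X, \ZZ) = \ZZ/2 \ne 0$. The main obstacle in this plan is the unirationality step; the geometric content is the rank bound for quadrics containing a large linear subspace, which turns $\pi_1$ into a linear $\PP^{2n-2}$-bundle and makes the dimension count $\dim U_B = \dim \Gr(n-2, V)$ deliver birationality.
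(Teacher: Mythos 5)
Your proof is correct and follows essentially the same approach as the paper: unirationality is deduced from the fact that $U_{4,n}$ is a $\PP^{2n-2}$-bundle over $\Gr(n-2,V)$, so that cutting by the $2n-2$ hyperplanes defining $X$ makes $\eta^{-1}(X)$ birational to $\Gr(n-2,V)$. The rank argument you supply—that any quadric in $\PP^{n-1}$ containing an $(n-3)$-plane automatically has rank at most $4$—is a nice explicit justification for the $\PP^{2n-2}$-bundle structure, which the paper asserts without spelling out.
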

\begin{proof}
Only the unirationality remains to be proved.
The incidence variety $U_{4,n}$ of \eqref{defineUandW} is a $\PP^{2n-2}$-bundle over the Grassmannian $\Gr(n-2,V)$. 
This means that if $X$ is a complete intersection of $2n-2$ divisors in $W_{4,n}$, the preimage $U_X=\eta^{-1}(X)$ is birational to $\Gr(n-2,V)$. 
Therefore $U_X$ is rational, and hence $X$ is unirational.
\end{proof}

\begin{example}
When $n=4$, $X$ is a double cover of $\PP^3$ branched along a singular quartic surface.
This is the example famously studied by Artin and Mumford in \cite{artinmumford}, and for which they prove Proposition~\ref{thm:higherDimArtinMumford}.
Here $X$ has 10 ordinary double points and the blow-up $\widetilde X$ contains 10 exceptional divisors isomorphic to $\PP^1\times \PP^1$.
\end{example}


\subsection{The case $n=4$, $c < 6$}\label{AMcase}
The Artin--Mumford examples of $X_{4,6}$ can also naturally be generalised to $X_{4,c}$ with $c < 6$.
We will explain that, at least when $c = 4$ or $5$, these do not have torsion in $H^3$ in their smooth models (correcting a claim made in a MathOverflow answer \cite{mathoverflow}).

The singular locus of $X_{4,c}$ has codimension 3 and is a smooth Enriques surface or a smooth genus 6 curve when $c = 4$ and $c= 5$, respectively.
There is a resolution $\pi \colon \widetilde X \to X_{4,c}$ obtained by blowing up the singular locus, where the exceptional divisor is a $\PP^1 \times \PP^1$-bundle over the singular locus.

\begin{proposition}
With $\widetilde X$ as above, we have that the group $H^3(\widetilde X,\ZZ)$ is torsion free for $c = 5$ and 0 for $c = 4$.
\end{proposition}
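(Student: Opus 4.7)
The plan is to compute $H^3(\widetilde X, \ZZ)$ via the long exact sequence of the pair $(\widetilde X, X^{\mathrm{sm}})$ combined with the Thom isomorphism $H^k(\widetilde X, X^{\mathrm{sm}}) \cong H^{k-2}(E, \ZZ)$ for the normal line bundle of the smooth divisor $E = \pi^{-1}(Y)$. The relevant portion reads
\[
H^2(X^{\mathrm{sm}}) \xrightarrow{\partial_2} H^1(E) \xrightarrow{i_*} H^3(\widetilde X) \xrightarrow{j^*} H^3(X^{\mathrm{sm}}) \xrightarrow{\partial_3} H^2(E).
\]
The inputs are Corollary~\ref{thm:summaryOfHX}, which gives $H^3(X^{\mathrm{sm}}, \ZZ) = \ZZ/2$ for both $c = 4$ and $c = 5$ (since $c \le 4n-11 = 5$), together with the projective bundle formula $H^k(E, \ZZ) \cong H^k(Y) \oplus H^{k-2}(Y)^2 \oplus H^{k-4}(Y)$ applied to the $\PP^1 \times \PP^1$-bundle $E \to Y$.

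For $c = 4$, the base $Y$ is a smooth Enriques surface with $H^1(Y,\ZZ) = 0$ and $H^2(Y,\ZZ) = \ZZ^{10} \oplus \ZZ/2$, so $H^1(E) = 0$ and $H^2(E) = \ZZ^{12} \oplus \ZZ/2$, with the 2-torsion summand pulled back from $Y$. The sequence reduces to $0 \to H^3(\widetilde X) \to \ZZ/2 \xrightarrow{\partial_3} \ZZ^{12} \oplus \ZZ/2$, so $H^3(\widetilde X) = 0$ will follow once $\partial_3$ is shown to be injective. I would verify injectivity by identifying the generator of $H^3(X^{\mathrm{sm}})$ with the Brauer class of the conic bundle $\eta|_{X^{\mathrm{sm}}}$ (via Remark~\ref{remark:Brauer}), and arguing that $\partial_3$ sends it to the pullback to $E$ of the canonical 2-torsion class on $Y$. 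Concretely, the two $\PP^1$-rulings of the fibres of $E \to Y$ are exchanged by precisely the étale double cover realising the Enriques surface as a quotient of a K3, and this monodromy is the residue of the Brauer class.

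For $c = 5$, $Y$ is a smooth curve of genus $6$, giving $H^1(E) = \ZZ^{12}$ and $H^2(E) = \ZZ^3$, both torsion-free. Therefore $\partial_3 = 0$, and since $H^2(X^{\mathrm{sm}}) = \ZZ$ is generated by the polarisation class which extends across $E$, the map $\partial_2$ also vanishes. The sequence then collapses to
\[
0 \to \ZZ^{12} \to H^3(\widetilde X) \to \ZZ/2 \to 0,
\]
and torsion-freeness of $H^3(\widetilde X)$ is equivalent to this extension being non-split, i.e., $H^3(\widetilde X) \cong \ZZ^{12}$. To witness non-splitness I would construct an explicit integral class $\widetilde \alpha \in H^3(\widetilde X, \ZZ)$ whose image under $j^*$ is the Brauer generator: one of the two $\PP^1$-rulings on $E$ provides a divisorial trivialisation of the pullback of $\eta$ to $E$, and its first Chern class globalises to a class on $\widetilde X$ restricting to the Brauer class on $X^{\mathrm{sm}}$.

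The main obstacle will be the explicit identification of the boundary map $\partial_3$ (for $c=4$) and of the non-split extension witness (for $c=5$). Both amount to a local study of the conic bundle $\eta$ near the exceptional divisor, and in particular of how the two $\PP^1$-rulings of its $\PP^1 \times \PP^1$-fibres glue over $Y$ — an obstruction detected by the canonical $2$-torsion in the Enriques case and forced to vanish in the curve case by the absence of $2$-torsion in $H^2(Y,\ZZ)$.
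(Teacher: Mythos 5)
Your approach is genuinely different from the paper's, and it hits a wall at the decisive step. You use the Gysin sequence for $E \subset \widetilde X$, so the only class-group input you have is $H^3(X^{\mathrm{sm}},\ZZ) = \ZZ/2$. The paper instead uses the Leray spectral sequence for the blow-down $\pi\colon\widetilde X\to X$ to the \emph{singular} variety $X$, and the crucial extra input, which your argument never invokes, is that $H^3(X,\ZZ)=0$: this follows from Corollary~\ref{Lefschetzdoublecovers} because $X_{4,c}$ for $c=4,5$ is a ramified double cover of $\PP^{9-c}$, hence an ample divisor in a weighted projective space $\PP(1^{10-c},2)$. Once $H^3(X,\ZZ)=0$ is known, the spectral sequence contributes to $H^3(\widetilde X,\ZZ)$ only through $E_2^{1,2}=H^1(S,R^2\pi_*\ZZ)\cong H^1(S',\ZZ)$ where $S'\to S$ is the double cover exchanging the rulings; torsion-freeness (resp.\ vanishing) is then immediate, with no boundary maps or extension problems to analyse.

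By contrast, the two steps where your plan says ``I would show\ldots'' are exactly the hard parts, and the $c=5$ step does not work as written. You reduce to showing that the extension
\[
0 \to \ZZ^{12} \to H^3(\widetilde X,\ZZ) \to \ZZ/2 \to 0
\]
is nonsplit, and propose to certify this by producing an integral class $\widetilde\alpha\in H^3(\widetilde X,\ZZ)$ mapping to the generator of $\ZZ/2$. But exactness of the sequence already guarantees such a lift exists whether or not the extension splits; what you actually need is that no \emph{$2$-torsion} lift exists, which your construction does not address. (Your proposed construction has a second problem: if the monodromy of the two rulings of $E\to Y$ along the genus-$6$ curve $Y$ is nontrivial, i.e.\ $Y'$ connected, then ``one of the two $\PP^1$-rulings on $E$'' is not globally defined, so there is no divisorial trivialisation to globalise.) For $c=4$, the claim that $\partial_3$ sends the Brauer generator to the pullback of the canonical $\ZZ/2$-class of the Enriques surface $Y$ is plausible but would require a real local computation of the residue; in the paper's approach this is sidestepped entirely since $H^3(X,\ZZ)=0$ removes the $\ZZ/2$ before it ever appears. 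A smaller issue: your projective bundle formula for $H^*(E,\ZZ)$ presumes the two rulings of the $\PP^1\times\PP^1$-bundle are globally distinguished; when they are not, the correct statement replaces the summand $H^{k-2}(Y)^2$ by $H^{k-2}(Y,F)$ for the rank-$2$ local system $F$, which changes the ranks you quote (though not the torsion-freeness you actually use).
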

\begin{proof}
To show that $H^3(\widetilde X,\ZZ)$ is torsion free, we first remark that $H^3(X,\ZZ)$ has no torsion by Corollary \ref{Lefschetzdoublecovers} below. 
Next, we consider the Leray spectral sequence associated to the blow-up $\pi : \widetilde{X}\to X$, with $E_2$-page $H^p(X,R^q\pi_*\ZZ)$ converging to $H^{p+q}(\widetilde{X},\ZZ)$.
Let $S \subset X$ be the singular locus.
We have $R^0\pi_* \ZZ_{\widetilde X} = \ZZ_X$, $R^1\pi_*\ZZ = 0$, $R^2\pi_*\ZZ_{\widetilde X} = F$ and $R^3\pi_*\ZZ = 0$, where $F$ is a rank two local system.
More explicitly, we have $F = R^2\pi_*\ZZ_{E}$, and since $E$ is $\PP^1 \times \PP^1$-bundle over $S$, this means $F \cong R^0f_*\ZZ_{S'}$, where $f \colon S' \to S$ is the étale double cover of $S$ corresponding to the two families of lines in each fibre of $E \to S$.

By Corollary \ref{Lefschetzdoublecovers}, we have $H^3(X,\ZZ) = 0$, and so the only non-vanishing term of the $E_2$-page of the spectral sequence is
\[
H^1(X,R^2\pi_*\ZZ_{\widetilde X}) \cong H^1(S,R_0f_*\ZZ_{S'}) = H^1(S', \ZZ).
\]
Running the spectral sequence then gives
\[
0 \to H^3(\widetilde X, \ZZ) \to H^1(S',\ZZ) \to H^4(X,\ZZ)
\]
Since $H^1(S',\ZZ)$ is torsion free, the same is true for $H^3(\widetilde X,\ZZ)$.
When $c = 4$, the variety $S$ is an Enriques surface, so that $S'$ is either a K3 surface or two copies of $S$; in either case $H^1(S',\ZZ) = 0$ which gives $H^3(\widetilde X,\ZZ) = 0$.
\end{proof}

In the argument above, we used the following version of the Weak Lefschetz hyperplane theorem for singular varieties.

\begin{proposition}\label{BMLefschetz}
Let $V$ be a projective variety of dimension $n+1$ and let $D$ be an ample divisor which is disjoint from the singular locus $\sing(V)$. Then the natural maps
$$
H_i(D,\ZZ)\langto H^{2n+2-i}(V,\ZZ)
$$are isomorphisms for $i<n$ and surjective for $i=n$.
\end{proposition}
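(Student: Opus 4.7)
The plan is to relate the asserted map to the long exact sequence of the pair $(V, V\setminus D)$, after identifying its left-hand side using excision together with Lefschetz duality on the smooth locus, and then to conclude by invoking the Lefschetz theorem for the affine complement $V\setminus D$.

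First I would note that, because $D$ is disjoint from $\sing(V)$, the closed set $\sing(V)$ is contained in the open set $V\setminus D$. Excision therefore yields an isomorphism
\begin{equation*}
H^k(V, V\setminus D;\ZZ) \langto H^k(V^{\mathrm{sm}}, V^{\mathrm{sm}}\setminus D;\ZZ).
\end{equation*}
The smooth locus $V^{\mathrm{sm}}$ is an oriented topological manifold of real dimension $2n+2$, and $D\subset V^{\mathrm{sm}}$ is a compact closed subset. Lefschetz duality on $V^{\mathrm{sm}}$ then gives a canonical identification
\begin{equation*}
H^k(V^{\mathrm{sm}}, V^{\mathrm{sm}}\setminus D;\ZZ) \cong H_{2n+2-k}^{BM}(D,\ZZ) = H_{2n+2-k}(D,\ZZ),
\end{equation*}
where the last equality uses compactness of $D$. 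Composing these isomorphisms with the connecting map to $H^{2n+2-i}(V,\ZZ)$ coming from the long exact sequence of the pair produces the natural map in the statement (with $k=2n+2-i$).

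The key geometric input is then the Lefschetz theorem for (possibly singular) affine varieties. Since $D$ is ample, $V\setminus D$ is an affine variety of complex dimension $n+1$, so by a theorem of Hamm (or stratified Morse theory as in \cite{GM}) one has
\begin{equation*}
H^k(V\setminus D,\ZZ)=0 \quad\text{for } k>n+1.
\end{equation*}
Substituting into the long exact sequence
\begin{equation*}
H^{k-1}(V\setminus D) \to H^k(V, V\setminus D) \to H^k(V) \to H^k(V\setminus D)
\end{equation*}
with $k=2n+2-i$, both flanking terms vanish when $i<n$ (so $k-1,k>n+1$), giving an isomorphism, and only the right-hand term vanishes when $i=n$, giving surjectivity.

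The only subtle point is the Lefschetz duality step, since $D$ itself may be singular. However, the form of duality used here—$H^k(M, M\setminus A;\ZZ)\cong H^{BM}_{m-k}(A,\ZZ)$ for any closed subset $A$ of an oriented topological $m$-manifold $M$—is standard (see e.g.\ Bredon's \emph{Topology and Geometry}), and applies regardless of whether $A$ is a submanifold. So no regularity of $D$ beyond its inclusion in $V^{\mathrm{sm}}$ is required, and the argument goes through as above.
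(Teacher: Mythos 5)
Your proof is correct and follows essentially the same route as the paper: both use the long exact sequence of the pair $(V, V\setminus D)$, identify $H^k(V, V\setminus D;\ZZ)$ with $H_{2n+2-k}(D;\ZZ)$ via duality on the smooth locus (your excision-plus-Lefschetz-duality breakdown is just a more explicit version of the paper's one-line identification), and then invoke the vanishing of cohomology of the affine complement $V\setminus D$ in degrees above its complex dimension. The only slip is terminological: the map $H^k(V,V\setminus D)\to H^k(V)$ in the long exact sequence is the restriction map induced by the inclusion of pairs, not a connecting homomorphism, but this does not affect the argument.
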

\begin{proof}
Letting $U=V-D$, the relative cohomology sequence takes the form
$$
\cdots \to H^i(V,V-D,\ZZ)\to H^i(V,\ZZ)\to H^i(U,\ZZ)\to H^{i+1}(V,V-D,\ZZ)\to \cdots
$$Moreover, since $V$ is smooth in a neighborhood of $D$, we may identify $H^i(V,V-D,\ZZ)$ with $H_{2n+2-i}^{BM}(D)=H_{2n+2-i}(D)$. Now, using that $U$ is affine of dimension $n+1$, the cohomology groups $H^i(U,\ZZ)$ vanish for all $i>n+1$, by Artin's vanishing theorem. 
\end{proof}
\begin{corollary}\label{Lefschetzdoublecovers}
    Let $\sigma: X\to \PP^n$ be a ramified double cover. Then for each $i < \frac{n}2$:
\begin{enumerate}[(i)]
    \item $H_{2i}(X,\ZZ)=\ZZ$ and $H_{2i-1}(X,\ZZ)=0$ 
        \item $H^{2i}(X,\ZZ)=\ZZ$ and $H^{2i-1}(X,\ZZ)=0$
\end{enumerate}
\end{corollary}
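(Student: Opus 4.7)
The plan is to realize $X$ as an ample divisor in a projective $(n+1)$-dimensional variety $V$ whose integral cohomology mimics that of $\PP^{n+1}$, and then extract the low-degree homology of $X$ via Proposition \ref{BMLefschetz}. The natural candidate for $V$ is the weighted projective space $V = \PP(1^{n+1},d)$: writing the double cover as $X = \{y^2 = f(x_0,\ldots,x_n)\}$ for some $f \in H^0(\PP^n,\OO_{\PP^n}(2d))$ defining the branch divisor, $X$ embeds naturally into $V$ via homogeneous coordinates $(x_0,\ldots,x_n,y)$ of weights $(1,\ldots,1,d)$.

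First I would check the hypotheses of Proposition \ref{BMLefschetz}. The divisor $X$ is cut out by a single homogeneous equation of weight $2d$, so its complement in $V$ is affine and hence $X$ is ample. The only singular point of $V$ is $[0:\cdots:0:1]$, where $y^2 - f$ evaluates to $1 \neq 0$, so $X \cap \sing(V) = \emptyset$. Note that we do not need $X$ itself to be smooth, which is fortunate for the applications of the corollary to the singular $X_{4,c}$ above.

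Next I would record the cohomology of $V$. Since the weights are pairwise coprime, $V$ is a well-formed weighted projective space, and its integral cohomology is
\[
H^*(V,\ZZ) \cong \ZZ[h]/h^{n+2},
\]
with $h \in H^2(V,\ZZ)$; in particular $H^{2k}(V,\ZZ) = \ZZ$ for $0 \le k \le n+1$ and the odd-degree cohomology of $V$ vanishes. If one prefers to avoid citing this as a black box, one may use the presentation $V \cong \PP^{n+1}/\mu_d$ obtained by setting $y = x_{n+1}^d$, together with a transfer or Smith-theory argument to deduce that $H^*(V,\ZZ)$ matches $H^*(\PP^{n+1},\ZZ)$.

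Combining these observations, Proposition \ref{BMLefschetz} yields isomorphisms $H_j(X,\ZZ) \cong H^{2n+2-j}(V,\ZZ)$ for every $j < n$. For $i < n/2$ one has both $2i < n$ and $2i-1 < n$, whence $H_{2i}(X,\ZZ) = \ZZ$ and $H_{2i-1}(X,\ZZ) = 0$, which proves (i); statement (ii) then follows from the universal coefficient theorem applied to these homology groups. The main obstacle is the integral cohomology calculation for the weighted projective space, which is standard but requires either a citation to the literature on weighted projective spaces or a short independent argument via the quotient presentation above.
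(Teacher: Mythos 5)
Your proof is correct and follows essentially the same route as the paper: embed $X$ as an ample hypersurface in the weighted projective space $\PP(1,\ldots,1,d)$ missing the singular vertex, apply Proposition~\ref{BMLefschetz}, and read off the answer from the cohomology of the weighted projective space (the paper cites Kawasaki for this, which you also note as the standard reference). One small caution: your assertion that $H^*(V,\ZZ)\cong \ZZ[h]/h^{n+2}$ as a \emph{ring} slightly overstates Kawasaki's theorem, whose multiplicative structure involves integer multiples depending on the weights — but only the additive structure matters here, and that does agree with $\PP^{n+1}$, so the argument is unaffected.
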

\begin{proof}
    Note that $X$ can be defined by an equation of the form $z^2=f(x_0,\ldots,x_n)$ in the weighted projective space $V=\PP(1,\ldots,1,\tfrac{d}{2})$. Thus $X$ is an ample divisor, disjoint from the one singular point of $V$. Thus the conditions of Proposition~\ref{BMLefschetz} hold, and we find that $H_j(X,\ZZ) = H^{2n-j}(V,\ZZ)$ when $j < n$.
    The cohomology of $V$ is computed in \cite[Theorem~1]{kawasaki_cohomology_1973}, which gives claim (i), and claim (ii) follows by the Universal Coefficient theorem.
\end{proof}

\section{Proof of Theorem \ref{mainthm2}}
\label{coniveausection}
In this section we state and prove a precise version of Theorem \ref{mainthm2}.
We first recall some general background on the coniveau filtrations on cohomology of algebraic varieties, referring to \cite{Benoist_Ottem_2021} for details.
 We restrict ourselves to the case of cohomology with integral coefficients $H^i(X,\ZZ)$ on a smooth projective variety $X$ over $\CC$.

A cohomology class $\alpha \in H^l(X,\ZZ)$ is said to be of {\em coniveau} $\ge c$ if it restricts to 0 on $X-Z$ where  $Z$ is a closed subset of codimension at least $c$ in $X$. 
These classes give the {\em coniveau filtration}
$ N^c H^l(X,\ZZ) \subset H^l(X,\ZZ).$ 
Equivalently, viewing $H^l(X,\ZZ)$ as $H_{2n-l}(X,\ZZ)$ via Poincaré duality, a class $\alpha\in H_{2n-l}(X,\ZZ)$ is of coniveau $\ge c$ if and only if $\alpha = j_*\beta$ for some $\beta\in H_{2n-l}(Y,\ZZ)$, where $j:Y\to X$ is the inclusion of a closed algebraic subset of $X$ of codimension at least $c$. 
So for example, $N^cH^{2c}(X,\ZZ)$ consists of exactly the algebraic classes in $H^{2c}(X,\ZZ)$.

A class $\alpha\in H^l(X,\ZZ)$ is said to be of {\em strong coniveau} $\ge c$ if 
$
\alpha=f_*\beta
$ where $f:Z\to X$ is a proper morphism, $Z$ is a smooth complex variety of dimension at most $n-c$, and $\beta\in H^{l-2c}(Z,\ZZ)$. 
Equivalently, $\alpha$ has strong coniveau $\ge 1$ if $\alpha=\tilde j_*\beta$ is the Gysin pushforward of a class $\beta\in H^{*}(\widetilde Y,\ZZ)$ and $\tilde j : \widetilde Y\to Y$ is the desingularization of a closed subset of codimension $\ge c$. These classes give the {strong coniveau filtration} $
\widetilde N^c H^l(X,\ZZ) \subset H^l(X,\ZZ)
$. 

We have $\widetilde N^c H^l(X,\ZZ)\subset N^c H^l(X,\ZZ)$ for every $c$. Moreover, the quotient $$N^1 H^l(X,\ZZ)/ \widetilde N^1 H^l(X,\ZZ)$$ is a birational invariant among smooth projective varieties \cite{Benoist_Ottem_2021}. This invariant is particularly interesting for rationally connected varieties $X$. In this case, all cohomology classes are of coniveau $\ge 1$:

\begin{proposition}\label{ctvRC}
    Let $X$ be a rationally connected smooth projective complex variety. Then for any $l>0$,
$$
N^1H^l(X,\ZZ)=H^l(X,\ZZ).
$$
\end{proposition}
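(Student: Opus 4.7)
The plan is to apply the Bloch--Srinivas decomposition of the diagonal. Since $X$ is rationally connected, any two of its points are connected by a rational curve and hence rationally equivalent, so $\mathrm{CH}_0(X) \simeq \ZZ$, generated by the class of a single point $x_0 \in X$. The Bloch--Srinivas theorem then produces a positive integer $N$, a divisor $D \subset X$, and cycle classes $\Gamma_1, \Gamma_2 \in \mathrm{CH}^n(X \times X)$ (where $n = \dim X$) with $\Gamma_1$ supported on $X \times \{x_0\}$ and $\Gamma_2$ on $X \times D$, satisfying
\[
N \cdot [\Delta_X] \;=\; [\Gamma_1] + [\Gamma_2] \quad \text{in } \mathrm{CH}^n(X \times X).
\]

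The next step is to act on an arbitrary $\alpha \in H^l(X,\ZZ)$ with $l > 0$ by both sides as correspondences. Since $\Gamma_1$ factors through the point $\{x_0\}$, where $H^l$ vanishes in positive degree, $\Gamma_1^*\alpha = 0$. Since $\Gamma_2$ is supported on $X \times D$, the class $\Gamma_2^*\alpha$ is supported on $D$ and therefore lies in $N^1 H^l(X,\ZZ)$. This yields $N\alpha \in N^1 H^l(X,\ZZ)$, and already suffices to prove the rational statement $H^l(X,\QQ) = N^1 H^l(X,\QQ)$.

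The main obstacle is to upgrade this to the integral statement: from $N\alpha \in N^1 H^l(X,\ZZ)$ to $\alpha \in N^1 H^l(X,\ZZ)$. Equivalently, the $N$-torsion class $\alpha|_{X \setminus D} \in H^l(X \setminus D, \ZZ)$ must be killed by restricting to a smaller open $X \setminus D'$. The plan for this step is to analyse the evolution of the torsion in $H^l(X \setminus D',\ZZ)$ as $D'$ enlarges, by means of the Kummer sequence in \'etale cohomology together with $H^1(X,\ZZ) = 0$ (a consequence of the simple connectedness of $X$ as a rationally connected variety). Enlarging $D'$ enlarges $\Pic(X \setminus D')$, and one can arrange the residual torsion in $\alpha|_{X \setminus D'}$ to disappear, producing a divisor $D' \supset D$ for which $\alpha|_{X \setminus D'} = 0$.
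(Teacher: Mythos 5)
Your Bloch--Srinivas step is correct and standard: rational connectedness gives $\mathrm{CH}_0(X)_\QQ = \QQ$, the decomposition of the diagonal $N\Delta_X = \Gamma_1 + \Gamma_2$ follows, and letting both sides act on $\alpha \in H^l(X,\ZZ)$ with $l > 0$ gives $N\alpha \in N^1H^l(X,\ZZ)$. This is exactly the content of \cite{blochsrinivas} that is being invoked, and it settles the statement with $\QQ$-coefficients.

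The genuine gap is in the integral upgrade, and your proposed mechanism for it does not work. First, the heuristic ``enlarging $D'$ enlarges $\Pic(X\setminus D')$'' is backwards: for $X$ smooth, $\Pic(X\setminus D')$ is the \emph{quotient} of $\Pic(X)$ by the subgroup generated by the components of $D'$, so removing more divisors can only shrink it. What can grow upon removing a divisor is $H^2(X\setminus D',\ZZ)$ (via the Gysin map from $H^1$ of the boundary), but there is no reason for these new classes to be algebraic, so the Kummer-sequence picture you sketch does not by itself control the torsion in $H^l(X\setminus D',\ZZ)$. Second, even a correct version of such an argument would be special to $l = 3$, where $H^3(\ZZ)[n]$ is tied to $\Pic$ via the Kummer sequence; for general $l$ the connection disappears.

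What actually closes the gap --- and is the key input of \cite{CTV}, which the paper cites for the general case --- is the statement that $H^l(X,\ZZ)/N^1H^l(X,\ZZ)$ is \emph{torsion-free} for any smooth projective complex variety (no rational connectedness needed). This follows from the Bloch--Kato conjecture (Voevodsky's theorem) together with Bloch--Ogus theory: the Bloch--Ogus coniveau spectral sequence identifies $H^l(X,\ZZ)/N^1H^l(X,\ZZ)$ with a subgroup of $H^0(X_{\mathrm{Zar}}, \mathcal H^l(\ZZ))$, and Bloch--Kato implies this Zariski sheaf is torsion-free. Given that, your conclusion follows immediately: $N\alpha \in N^1H^l(X,\ZZ)$ means $\alpha$ is torsion in the quotient $H^l/N^1$, hence $\alpha \in N^1H^l(X,\ZZ)$. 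So the structure you were aiming for is right, but the crucial torsion-freeness is a nontrivial theorem rather than something extracted from an ad hoc $\Pic$ argument. (The paper itself does not reprove any of this; it cites \cite{blochsrinivas} for $l=3$ and \cite{CTV} for general $l$.)
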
\begin{proof}
 See \cite{blochsrinivas} for the case $\ell=3$, and \cite{CTV} in general. 
\end{proof}





In \cite[Question 3.1]{voisinconiveauRC}, Voisin asked whether $\widetilde N^1H^l(X,\ZZ)=N^1H^l(X,\ZZ)$ for $X$ a rationally connected variety, i.e., whether all cohomology classes are of strong coniveau 1 (see also \cite[Section 7.2]{Benoist_Ottem_2021}). In the same paper, she proved that any class in $H^3(X,\ZZ)$ modulo torsion is of strong coniveau 1. This was extended by 
Tian \cite[Theorem 1.23]{tian2022local} who proved that $H^3(X,\ZZ)=\widetilde{N}^1H^3(X,\ZZ)$ for any rationally connected threefold. 
Our Fano varieties give the first rationally connected examples where the two coniveau filtrations are different.

In \cite{Benoist_Ottem_2021}, the following topological obstruction to strong coniveau $\ge 1$ was introduced.
\begin{proposition}\label{maintopologicalobstruction}
    If  $\alpha\in H^3(X,\ZZ)$ is a class of strong coniveau $\ge 1$, then the mod 2 reduction $\bar \alpha \in H^3(X,\ZZ/2)$ satisfies $$\bar \alpha^2= 0 \mbox{ in }H^6(X,\ZZ/2).$$
\end{proposition}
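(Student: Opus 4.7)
The plan is to decompose the cup square via the Adem relation $\Sq^3 = \Sq^1 \Sq^2$, reducing the question to showing that $\Sq^2\bar\alpha$ admits an integral lift, and then to use a Steenrod--Riemann--Roch formula to write $\Sq^2\bar\alpha$ as a pushforward of a manifestly integrally-liftable class. So write $\alpha = f_*\beta$ with $f\colon Z \to X$ a proper morphism from a smooth variety $Z$ of dimension at most $n-1$ (where $n = \dim X$), and $\beta \in H^1(Z, \ZZ)$; I may assume $\dim Z = n-1$, since otherwise $f_*\beta$ vanishes in degree $3$ for dimensional reasons.

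The first step is to observe that $\Sq^1\bar\beta = 0$ in $H^2(Z, \ZZ/2)$: this is because $\Sq^1$ coincides with the Bockstein of $0 \to \ZZ \to \ZZ \to \ZZ/2 \to 0$, and $\bar\beta$ admits the integral lift $\beta$. Combined with $\Sq^i\bar\beta = 0$ for $i \geq 2$ (by instability), this gives $\Sq(\bar\beta) = \bar\beta$.

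Next, using the unstable identity $x^2 = \Sq^3 x$ for $x \in H^3(X, \ZZ/2)$ and the Adem relation $\Sq^3 = \Sq^1 \Sq^2$, I rewrite
\[
\bar\alpha^2 \;=\; \Sq^3\bar\alpha \;=\; \Sq^1\bigl(\Sq^2\bar\alpha\bigr).
\]
Since $\Sq^1$ is again the Bockstein, it suffices to prove that $\Sq^2\bar\alpha \in H^5(X, \ZZ/2)$ is the mod-$2$ reduction of an integral class.

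This is the hardest step. I would apply a Steenrod--Riemann--Roch formula for the proper morphism $f$ between smooth complex varieties, expressing $\Sq(f_*y)$ as $f_*$ of $\Sq(y)$ multiplied by a characteristic class of the virtual normal bundle $\nu_f$. Using $\Sq(\bar\beta) = \bar\beta$ and extracting the degree-$5$ component yields
\[
\Sq^2\bar\alpha \;=\; f_*(\bar\beta \cup c),
\]
where $c \in H^2(Z, \ZZ/2)$ is built from the degree-$2$ Stiefel--Whitney classes of $\nu_f$. Because $Z$ and $X$ are complex manifolds, $\nu_f$ is a virtual complex bundle, so its odd Stiefel--Whitney classes vanish and $w_2(\nu_f) = c_1(\nu_f) \bmod 2$ admits the integer lift $c_1(\nu_f)$. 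Consequently $c$ lifts to $H^2(Z, \ZZ)$, the product $\bar\beta \cup c$ lifts to $H^3(Z, \ZZ)$, and therefore $\Sq^2\bar\alpha = f_*(\bar\beta \cup c)$ lifts to $H^5(X, \ZZ)$. We conclude $\bar\alpha^2 = \Sq^1\Sq^2\bar\alpha = 0$, as desired.
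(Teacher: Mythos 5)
Your proof is correct. The paper's own ``proof'' of Proposition \ref{maintopologicalobstruction} is a one-line citation to \cite[Proposition 3.5]{Benoist_Ottem_2021}, and what you have written is, to my knowledge, precisely the argument underlying that cited result specialised to the case $c=1$, $l=3$: decompose $\bar\alpha^2 = \Sq^3\bar\alpha = \Sq^1\Sq^2\bar\alpha$, then kill $\Sq^1$ by exhibiting an integral lift of $\Sq^2\bar\alpha$ via the Atiyah--Hirzebruch Riemann--Roch formula for Steenrod squares, using that for a proper map $f$ of complex manifolds the correction term is $w_2(\nu_f) = \overline{c_1(\nu_f)}$ (the odd Stiefel--Whitney classes of the virtual normal bundle vanish) and that $\bar\beta$ lifts by assumption. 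All the steps check out, including the reduction to $\dim Z = n-1$ by degree considerations and the instability/Bockstein computation giving $\Sq(\bar\beta)=\bar\beta$.
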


\begin{proof}
    This is a special case of \cite[Proposition 3.5]{Benoist_Ottem_2021}.
\end{proof}
Here is the precise version of Theorem \ref{mainthm2}:

\begin{theorem}
\label{thm:mainThm2PreciseVersion}
For $n \ge 6$, the variety $X_{n, 2n-1}$ from Definition \ref{defn:CompleteIntersectionX} is a Fano variety of dimension $2n-6$ with $K_X = -H$, such that
\[
0 = \widetilde N^1H^3(X,\ZZ) \not= N^1H^3(X,\ZZ)  =H^3(X,\ZZ) \cong \ZZ/2.
\]
\end{theorem}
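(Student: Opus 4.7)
The plan is to assemble the conclusion directly from results proved earlier in the paper. The Fano part ($X$ smooth of dimension $2n-6$, with $K_X = -H$) and the identification $H^3(X,\ZZ) \cong \ZZ/2$ are already established by Theorem \ref{thm:mainthm1PreciseVersion}, so no additional work is required there. What remains is to pin down the two filtrations.

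For the identity $N^1H^3(X,\ZZ) = H^3(X,\ZZ)$, I would invoke Proposition \ref{ctvRC}: since $X$ is Fano, it is rationally connected by \cite{Campana_1992, KMM}, and hence all positive-degree cohomology lies in $N^1$. This takes care of the right-hand equality without further computation.

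The heart of the argument is the vanishing $\widetilde N^1 H^3(X,\ZZ) = 0$. Let $\alpha$ denote the nonzero class in $H^3(X,\ZZ) \cong \ZZ/2$. The plan is to rule out strong coniveau $\ge 1$ via the topological obstruction of Proposition \ref{maintopologicalobstruction}: if $\alpha$ had strong coniveau $\ge 1$, then the mod~$2$ reduction $\bar\alpha \in H^3(X,\ZZ/2)$ would satisfy $\bar\alpha^2 = 0$ in $H^6(X,\ZZ/2)$. To contradict this, I would appeal to the second half of Corollary \ref{thm:summaryOfHX}, which asserts that for $c \le 4n-13$ the square of the nonzero class in $H^3(X_{n,c}^{\mathrm{sm}},\ZZ)$ has nonvanishing mod $2$ reduction. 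Here $c = 2n-1$, and the inequality $2n-1 \le 4n - 13$ is exactly the hypothesis $n \ge 6$, which is why this step is the one dictating the dimension bound in the theorem. Since $X = X_{n,2n-1}$ is smooth for these parameters (Theorem \ref{thm:mainthm1PreciseVersion}), $X^{\mathrm{sm}} = X$ and the nonvanishing $\bar\alpha^2 \neq 0$ transfers directly, yielding the required contradiction.

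The only genuinely substantive step is the last one, and even there the work has been done upstream: the nontriviality of $\bar\alpha^2$ traces back through the comparison map $H^*(\BGO(4)^\circ,\ZZ/2) \to H^*(X,\ZZ/2)$ of Proposition \ref{thm:compareBGOToX} and the computation $w_3^2 \neq 0$ in $H^6(\BSO(4),\ZZ/2)$ recorded in Section \ref{BGOsection}. So the proof reduces to citing Theorem \ref{thm:mainthm1PreciseVersion}, Proposition \ref{ctvRC}, Corollary \ref{thm:summaryOfHX} and Proposition \ref{maintopologicalobstruction} in sequence, with the arithmetic check $2n-1 \le 4n-13 \iff n \ge 6$ as the sole numerical verification.
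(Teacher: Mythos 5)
Your proposal matches the paper's proof step for step: cite Theorem \ref{thm:mainthm1PreciseVersion} for the geometric and cohomological setup, Proposition \ref{ctvRC} for the coniveau-1 equality via rational connectedness, Corollary \ref{thm:summaryOfHX} for the nonvanishing of $\bar\alpha^2$ mod 2, and Proposition \ref{maintopologicalobstruction} to exclude strong coniveau 1. Your explicit verification that $2n-1 \le 4n-13$ is equivalent to $n \ge 6$ is a welcome, if minor, amplification of what the paper leaves implicit.
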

\begin{proof}
Let $X = X_{n,2n-1}$. 
The computation of $\dim X$, $H^3(X,\ZZ)$ and $K_X$ is part of Theorem~\ref{thm:mainthm1PreciseVersion}.
Since $X$ is Fano, it is rationally connected, so Proposition~\ref{ctvRC} gives $N^1H^3(X,\ZZ) = H^3(X,\ZZ)$.
Corollary~\ref{thm:summaryOfHX} shows that the class $\alpha \not= 0 \in H^3(X,\ZZ)$ is such that the mod 2 reduction of $\alpha^2$ is non-zero.
Proposition~\ref{maintopologicalobstruction} then implies $\alpha \not\in \widetilde N^1H^3(X,\ZZ)$, so $\widetilde N^1H^3(X,\ZZ) = 0$.
\end{proof}





\begin{remark}
    We can obtain examples of other rationally connected varieties where $\widetilde N^cH^{l}\neq  N^cH^{l}$ for any $c\ge 1$ and $l\ge 2c+1$ by taking appropriate products with projective spaces (see e.g., \cite[Theorem 4.3]{Benoist_Ottem_2021}). 
\end{remark}

\begin{remark}[The Artin--Mumford example]
In light of Theorem \ref{mainthm2}, it is natural to ask whether the 2-torsion class $\alpha\in H^3(X,\ZZ)$ in the Artin--Mumford example has strong coniveau $\ge 1$, i.e., whether the birational invariant \eqref{coniveauquotient} is zero. It turns out that this is indeed the case: Inspecting Artin--Mumford's `brutal procedure' in \cite[p. 82--83]{artinmumford}, shows that the class $\alpha$ is obtained from a cylinder map $H^1(C,\ZZ)\to H^3(X,\ZZ)$ from an elliptic curve $C$. In other words, $\alpha$ is the pushfoward from a class in $H^1$ from some ruled surface $S$ over $C$.

Note that this can also be seen as a special case of \cite[Theorem 1.23]{tian2022local}.
\end{remark}

\subsection{Open questions}We conclude with two open questions regarding the two coniveau filtrations:
\begin{question}
    Are there rationally connected varieties $X$  with  $\widetilde N^1H^l(X,\ZZ) \neq N^1H^l(X,\ZZ)$ for some $l>0$ and torsion free $H^l(X,\ZZ)$?
\end{question}
\begin{question}
    Are there rationally connected varieties of dimension 4 or 5 where $\widetilde N^1H^l(X,\ZZ) \neq N^1H^l(X,\ZZ)$ for some $l>0$?
\end{question}

    \begin{remark}
 Let $X = X_{5,9}$ be the fourfold from Section \ref{defineFano4fold}.
 Then we don't know if the generator $\alpha$ of $H^3(X,\ZZ)$ has strong coniveau $\ge 1$. 
 We can show, however, that $\overline \alpha^2=0$ in $H^6(X,\ZZ/2)$, so the topological obstruction of Proposition \ref{maintopologicalobstruction} vanishes. To see this, we use the fact that the third integral Steenrod square $\operatorname{Sq}_{\ZZ}^3 \colon H^p(Z,\ZZ) \to H^{p+3}(X,\ZZ)$ is naturally identified with the third differential $d_3$ in the Atiyah--Hirzebruch spectral sequence of topological K-theory, with $E_2$-page $$H^p(X,K^{q}(pt))=\begin{cases}H^p(X,\ZZ) & q \text{ even }\\ 0 & \text{otherwise} \end{cases}$$ converging to $K^{p+q}(X)$. Now $H^*(X,\mathbb{Z})$ has torsion only in degrees $3$ and $6$, with torsion part $\mathbb{Z}/2$ in each of these degrees. It also has torsion $\mathbb{Z}/2 \oplus \mathbb{Z}/2$ in its topological $K$-theory, by Proposition \ref{thm:SODofX} (because $S$ is a general type surface with fundamental group $\mathbb{Z}/2$). 
 This implies $d_3 = 0$, since otherwise the Atiyah-Hirzebruch spectral sequence would give that the topological $K$-theory of $X$ was torsion free.
 Since $\operatorname{Sq}_{\ZZ}^3$ is an integral lift of the usual (mod 2) Steenrod square $\operatorname{Sq}^3$, we then get 
 \[
 \overline \alpha \cup \overline \alpha = \operatorname{Sq}^3(\overline \alpha)=0.
 \]
\end{remark}In general, it would be interesting to find other obstructions to the equality of the two coniveau filtrations than the topological obstructions of \cite{Benoist_Ottem_2021}.




\appendix

\section{Comments on the Fano fourfold\texorpdfstring{\\\smallskip}{ (}by {J\'anos Koll\'ar}\texorpdfstring{}{)}}

\setcounter{footnote}{0}




In the present paper, Ottem and Rennemo construct smooth Fano fourfolds
$X$ such that $H_2(X, \z)\cong \z+\z/2$. This appendix gives a shorter computation of $H_2(X, \z)$, see Claims~\ref{c1} and~\ref{c2-1}.

We also add two new results. In  Claim~\ref{c2X} we exhibit two lines $L', L''\subset X$ such that  $L'- L''$ generates the  torsion in  $H_2(X, \z)\cong \z+\z/2$.

Then 
in Paragraph~\ref{s15} we show that $X$ is birational to a double cover of $\p^4$ ramified along a degree 18 hypersurface $R$, which is obtained as the 5-secants of a degree 15, smooth,  determinantal surface
$
S=\bigl(\rank N\leq 4\bigr)\subset \p^4,
$
where $N$ is a  $6\times 5$  matrix
whose entries are linear forms.
Although $S$ is smooth, it  is not a general determinantal surface, since
the latter have only  1-parameter families of 5-secants.

The higher dimensional examples constructed in the paper can  also be treated with minor changes.

We refer to \cite[Chapters VIII-IX]{room1938geometry} for  symmetric determinantal varieties and to \cite{kronecker1891algebraische} for the classification of lines on them. 
\medskip

\begin{say}[Basic set-up]\label{s1} We recall the construction of the Fano fourfolds in the paper. This is the case when $r=4$ and $n=5$ (see Section \ref{defineFano4fold}). Let $Z_i\subset \p^{14}$ be the space of rank $\leq i$ quadrics in $\p^4$.
Our main interest is 
$Z=Z_4\subset \p^{14}$, the space of rank $\leq 4$ quadrics in $\p^4$.
It is a quintic hypersurface.

The universal deformation of a rank 3 quadric is given by
$$
x_0^2-x_1^2-x_2^2+z_1x_3^2+z_2x_3x_4+z_3x_4^2=0.
\eqno{(\ref{s1}.1)}
$$
This has rank $\leq 4$ iff $z_2^2-4z_1z_3=0$. 
So $Z$   is singular along
$Z_3$,   with transversal singularity type $A_1$.  


As in \eqref{eqn:DefinitionOfIncidenceVariety}, define $U$ to be the  space of pairs  $(L^2\subset Q)$ where $L^2\subset\p^4$ is a 2-plane and $Q\subset\p^4$ a quadric. We have projections
$$
G\stackrel{p_1}{\longleftarrow} U \stackrel{p_2}{\longrightarrow} Z,
$$
and $\pic(U)\cong \z^2$ is generated by $p_1^*\o_G(1)$ and $p_2^*\o_{Z}(1)$. 

Let $U_2\subset U$ be the subset where the quadric has rank $\leq 2$.
These quadrics split into 2 hyperplanes, one of which must contain $L^2$. 
So $U_2$  is a $\p^1\times \p^4$-bundle over $\grass(2,4)$.
Set $U^\circ:=U\setminus U_2$. This is the preimage of the open set $Z^\circ:= Z_4\setminus Z_2$.

Since $U_2\subset U$ has codimension 3, 
$H^i(U^\circ, \z)=H^i(U, \z)$ for $i\leq 4=2(3-1)$.
In particular, 
$$
H^0(U^\circ, \z)= \z, \ 
H^1(U^\circ, \z)= 0, \ 
H^2(U^\circ, \z)= \z^2, \ 
H^3(U^\circ, \z)= 0.
$$
As in Section \ref{sec:doublecovers}, we define $W$ using the  Stein factorization of $p_2:U\to Z$
$$
U\stackrel{\eta}{\longrightarrow} W \stackrel{\sigma}{\longrightarrow}Z.
$$
In the coordinates (\ref{s1}.1),  $Z\cong (z_2^2-4z_1z_3=0)\times \a^{11}$.
Set $w_1:=\sqrt{z_1}$ and $w_3:=\sqrt{z_3}$.  Since $w_1w_3=z_2/2$, adjoining both $w_1, w_2$ is a degree 2 covering and 
$$
z_1x_3^2+z_2x_3x_4+z_3x_4^2=(w_1x_3+w_3x_4)^2.
$$
Thus, locally analytically over the points of
$Z_3\setminus Z_2$,  we have 
$W\cong  \a^2_{w_1, w_2}\times \a^{11}$, and the family of quadrics becomes
$$
(x_0^2-x_1^2-x_2^2+(w_1x_3+w_2x_4)^2=0)\times \a^{11}.
\eqno{(\ref{s1}.2)}
$$
Thus $U$ is the family of 2-planes in the same family as
$$
\bigl(x_0-x_1=x_2-(w_1x_3+w_2x_4)=0\bigr).
$$
Each of these has a unique intersection point with
$(x_3=x_4=0)$. Thus   $U$  is locally analytically
isomorphic to the trivial family
$$
W\times (x_0^2-x_1^2-x_2^2=0)\subset W\times \p^2.
$$
Therefore, 
restricting to $U^\circ $ we get 
$$
U^\circ\stackrel{\eta^\circ}{\longrightarrow} W^\circ \stackrel{\sigma^\circ}{\longrightarrow}Z^\circ,
$$
and   $\eta^\circ: U^\circ\to W^\circ$ is a smooth morphism with conics as fibers.
\end{say}

\begin{claim}\label{c1} $H^3(W^\circ, \z)\cong \z/2$ and $\eta$ has  no rational sections.
\end{claim}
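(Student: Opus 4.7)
The plan is to compute $H^3(W^\circ, \z)$ via the Leray spectral sequence of the smooth conic bundle $\eta^\circ \colon U^\circ \to W^\circ$, and to rule out a rational section of $\eta$ by the Chern class calculation on $U \to G$ already sketched in Example~\ref{fourfoldBrauergroup}. These two ingredients combine to force $H^3(W^\circ, \z) \cong \z/2$.

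Since $\eta^\circ$ is smooth and proper with every fibre isomorphic to $\mathbb{P}^1$, the sheaves $R^q\eta^\circ_*\z$ equal $\z$ for $q = 0, 2$ and vanish otherwise; the local system $R^2\eta^\circ_*\z$ is globally constant because the complex orientation of a $\mathbb{P}^1$-fibre is canonical. The Leray spectral sequence therefore has only two non-zero rows, and its only possibly non-trivial differential is $d_3^{p,2} \colon H^p(W^\circ, \z) \to H^{p+3}(W^\circ, \z)$, which is cup product with the class $\alpha := d_3^{0,2}(1) \in H^3(W^\circ, \z)$. Feeding in the cohomology of $U^\circ$ from Paragraph~\ref{s1} together with $H^2(W^\circ, \z) = \z$ (a consequence of Corollary~\ref{thm:summaryOfHX} applied with $c = 0$), the edge sequences give $H^1(W^\circ, \z) = 0$, a short exact sequence
\[
0 \to \z \to \z^2 \to \ker d_3^{0,2} \to 0,
\]
and an identification $H^3(W^\circ, \z) = \langle \alpha \rangle$ from $H^3(U^\circ, \z) = 0$; together these force $\alpha$ to have finite order $n$, with $H^3(W^\circ, \z) \cong \z/n$. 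Moreover, the relative anticanonical bundle $-K_{U^\circ/W^\circ}$ restricts to $\o_{\mathbb{P}^1}(2)$ on each fibre, so its first Chern class in $H^2(U^\circ, \z)$ hits $2 \in H^0(W^\circ, R^2\eta^\circ_*\z) = \z$; this gives $2 \in \ker d_3^{0,2}$, whence $n \mid 2$.

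The remaining step is to exclude $n = 1$, equivalently to exhibit the absence of a rational section of $\eta$. I would run the intersection-theoretic argument of Example~\ref{fourfoldBrauergroup}: the projection $p_1 \colon U = \mathbb{P}(E) \to G = \Gr(3, V)$ realises $U$ as the $\mathbb{P}^8$-bundle on the rank-$9$ kernel $E$ of $\Sym^2 V^\vee \otimes \o_G \to \Sym^2 \U^\vee$, and a rational section of $\eta$ would give a divisor $D \equiv a L + b G$ in $\pic(U)$, where $L = \o_{\mathbb{P}(E)}(1)$ and $G = p_1^*\o_G(1)$. Computing $D \cdot L^{13}$ two ways — once as the generic number of fibres of $\eta$ met by $D$, namely $\deg W = 10$, and once from the Segre classes of $E$ as $-20 b$ — yields $b = -1/2 \notin \z$, a contradiction. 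The main technical point I expect to need care with is the identification of the transgression $d_3^{0,2}$ with cup product by the Brauer class of the conic bundle, so that non-existence of rational sections is indeed reflected in $\alpha \neq 0$; this is standard for Brauer--Severi varieties of relative dimension one but deserves to be checked explicitly in the setting at hand.
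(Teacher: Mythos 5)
Your proposal is correct in outline and uses the same Leray spectral sequence framework as the appendix, but the way you pin down $\ker d_3^{0,2}$ is genuinely different from the paper's. Koll\'ar's proof of Claim~\ref{c1} computes the image of the edge map $H^2(U^\circ,\z)\to H^2(C,\z)$ directly by a geometric degeneration: the fibre $C$ over a rank-$3$ point, viewed inside the full $U$, degenerates to a cycle supported over $Z_2$ which is twice a generator, hence the image is $2\z$; this is self-contained and avoids both the anticanonical class and any Chern class computation. Your argument splits the job in two: $-K_{U^\circ/W^\circ}$ gives $2\in\ker d_3^{0,2}$, hence $n\mid 2$, and the intersection number $D\cdot L^{13}=-20b$ from Example~\ref{fourfoldBrauergroup} rules out a rational section, hence $n=2$. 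Each approach has something to recommend it: the paper's is shorter and internal to the appendix, yours reuses material already in the body of the paper.

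You correctly flag the one gap in your version: the implication ``no rational section of $\eta$ $\Rightarrow$ $\alpha\neq 0$'' is what you invoke, and as written it hides an appeal to the Brauer--Severi formalism. The cleanest way to close it -- and at the same time make the anticanonical step redundant -- is to observe that since $U_2\subset U$ has codimension $3$, we have $H^2(U^\circ,\z)=H^2(U,\z)=\Pic(U)=\z L\oplus\z G$ (here $U$ is a $\PP^8$-bundle over the rational homogeneous space $\Gr(3,V)$, so $h^{0,1}=h^{0,2}=0$ and $H^2$ has no torsion). The Leray edge map $H^2(U^\circ,\z)\to H^0(W^\circ,R^2\eta^\circ_*\z)=\z$ is then literally ``degree on a fibre'' $D\mapsto D\cdot F$, and the computation of Example~\ref{fourfoldBrauergroup} reads $L\cdot F=0$ and $G\cdot F=-2$, so the image is exactly $2\z$. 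That gives $\ker d_3^{0,2}=2\z$ in one stroke, without detouring through the statement about rational sections and without needing the transgression--Brauer dictionary. One further small remark: you cite Corollary~\ref{thm:summaryOfHX} with $c=0$ to get $H^2(W^\circ,\z)=\z$, but this input is not needed -- once $\ker d_3^{0,2}=2\z$ is established, both $H^2(W^\circ,\z)=\z$ and $H^3(W^\circ,\z)=\z/2$ fall out of the spectral sequence, and dropping the citation also preserves the appendix's intent of giving an independent computation.
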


 \begin{proof}Let $C\subset U^\circ$ be a fiber of $\pi^\circ$ over a rank 3 conic.
We can think of $C$ as (cones over) lines on a  quadric cone, or after further degeneration, as  (cones over) 2 pencils of lines on 2 planes.
So $C$ is 2-times the class of (cones over) a  pencil of lines in a plane.
Thus the image of $H^2\bigl(  U^\circ, \z\bigr)\to H^2(C, \z)\cong \z$ is twice the generator.
(Note that this splitting of $C$ into 2 components happens in the fibers over $Z_2$, thus outside  $Z^\circ$.)

In the Leray spectral sequence for $\pi^\circ$, the only interesting map is on the $E_3$ page:
$$
\z \cong H^0\bigl(  W^\circ, R^2\pi^\circ_*\z\bigr) \to H^3\bigl(  W^\circ, \z\bigr).
$$
As we noted, the kernel is $2\z$ and $ H^3\bigl(  U^\circ, \z\bigr)=0$.
Thus $ H^3\bigl(  W^\circ, \z\bigr)\cong \z/2$  and $H^2\bigl(  W^\circ, \z\bigr)\cong \z$.  \end{proof}
\medskip

\begin{say}[Construction of $X$] \label{s3}

As in Section \ref{defineFano4fold},
let $X_Z\subset Z$ be the complete intersection of $9$ general hyperplanes and  $X:=X_W\subset W$ its preimage.
Then $X\subset W^\circ$ is a Fano fourfold with $K_X\sim -\sigma^*H$.
\end{say}

\begin{claim}\label{c2-1} $H_2(X, \z)\cong \z+\z/2$.
\end{claim}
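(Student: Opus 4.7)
My approach is to transfer the cohomology computation of $W^\circ$ already performed in Claim~\ref{c1} to $X$ via the Lefschetz hyperplane theorem, and then pass from cohomology to homology using Poincaré duality together with the universal coefficient theorem.

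First I verify that $X$ lies in $W^\circ$. The singular locus of $W_{4,5}$ is $\sigma^{-1}(\overline Z_{2,5})$, which has dimension $8$, while $X$ is cut out by $9$ general hyperplane sections of the $13$-dimensional $W_{4,5}$; so by Bertini, $X$ is smooth of dimension $4$ and disjoint from $\Sing(W_{4,5})$. In particular $X \subset W^\circ$ is a smooth complete intersection of $9$ ample divisors in the smooth quasi-projective variety $W^\circ$.

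Next I apply the Lefschetz hyperplane theorem in the form of Lemma~\ref{GMLefschez} (with $r=4$, $n=5$, $c=9$, so that $\dim W_{r,n}-c = 4 = \dim X$). This yields that the restriction map $H^i(W^\circ,\ZZ) \to H^i(X,\ZZ)$ is an isomorphism for $i \le 3$. Combined with the values $H^0(W^\circ,\ZZ)=\ZZ$, $H^1(W^\circ,\ZZ)=0$, $H^2(W^\circ,\ZZ)=\ZZ$, $H^3(W^\circ,\ZZ)=\ZZ/2$ from Claim~\ref{c1} and the computation preceding it, this gives
\[
H^0(X,\ZZ)=\ZZ, \quad H^1(X,\ZZ)=0, \quad H^2(X,\ZZ)=\ZZ, \quad H^3(X,\ZZ)=\ZZ/2.
\]

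Finally, since $X$ is a smooth projective fourfold, Poincaré duality gives $H_k(X,\ZZ) \cong H^{8-k}(X,\ZZ)$, so in particular $H_6(X,\ZZ)=\ZZ$, $H_5(X,\ZZ)=\ZZ/2$, and $H_2(X,\ZZ)\cong H^6(X,\ZZ)$. The universal coefficient theorem then yields
\[
H^6(X,\ZZ) \cong \Hom(H_6(X,\ZZ),\ZZ) \oplus \Ext^1(H_5(X,\ZZ),\ZZ) \cong \ZZ \oplus \ZZ/2,
\]
so $H_2(X,\ZZ)\cong \ZZ\oplus \ZZ/2$ as claimed. The only non-formal step is the Lefschetz isomorphism for the quasi-projective variety $W^\circ$; this is precisely what Lemma~\ref{GMLefschez} provides (via the Goresky--MacPherson theorem), and the rest is standard algebraic topology.
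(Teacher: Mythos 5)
Your proof is correct and takes essentially the same route as the appendix: transfer $H_2(W^\circ,\ZZ)$ to $X$ via the Goresky--MacPherson Lefschetz hyperplane theorem, having computed the low-degree cohomology of $W^\circ$ in Claim~\ref{c1}. The only cosmetic difference is that the paper passes directly to homology (Lefschetz on $H_2$, then the universal coefficient theorem applied to $W^\circ$), whereas you first record $H^{\le 3}(X,\ZZ)$, then bounce through Poincar\'e duality and the universal coefficient theorem twice; a slightly shorter finish is to apply the universal coefficient theorem directly to $H^2(X,\ZZ)=\ZZ$ and $H^3(X,\ZZ)=\ZZ/2$, which already forces $H_2(X,\ZZ)\cong\ZZ\oplus\ZZ/2$ once one notes $H_1(X,\ZZ)=0$.
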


\begin{proof}
 The isomorphism $H_2(X, \z)\cong H_2(W^\circ, \z)$ follows from
 the Lefschetz hyperplane theorem of \cite{GM} (see Section \ref{defineFano4fold}).
\end{proof}

\medskip

Note that by \cite{voisin2006integral}, $H_2(X, \z)$ is generated by algebraic curves.
Next we write down a difference of 2 smooth, degree 1 rational curves that generates
the $\z/2$-summand of $H_2(X, \z)$.
\medskip

\begin{claim}\label{c2}  Let $L\subset Z\setminus Z_3\subset \p^{14}$ be a line.
  Its preimage in $W$ is a pair of lines $L'\cup L''$ such that
  \begin{enumerate}
\item  $L'$ and $L''$ are numerically equivalent, 
  \item  $U\times_WL'$ is the ruled surface $\f_1\cong B_p\p^2$,
  \item  $U\times_WL''$ is the ruled surface $\f_0\cong \p^1\times \p^1$,  and
    \item  $L'-L''$  is a  generator of
      the $\z/2$-summand  of $H_2(W^\circ, \z)\cong \z+\z/2$.
      \end{enumerate}
\end{claim}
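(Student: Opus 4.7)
Part (1) is immediate: since $\sigma\colon W \to Z$ is étale of degree $2$ over $Z \setminus Z_3$ and $L \cong \PP^1$ is simply connected, the preimage $\sigma^{-1}(L) = L' \sqcup L''$ is a disjoint union of two copies of $\PP^1$, each mapping isomorphically to $L$, so $H \cdot L' = H \cdot L'' = 1$. Since $\pic(W^\circ)$ has rank one by Claim~\ref{c1}, this forces $L' \equiv L''$.

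For (2) and (3), each $U \times_W L_i$ is a $\PP^1$-bundle over $\PP^1$ (as $\eta^\circ$ is a smooth conic bundle on $W^\circ$), hence a Hirzebruch surface. The plan is to exhibit both as the two components of an isotropic Grassmannian bundle. The apex map sends $Q_t \in L$ to its singular point $p(t) \in \PP^4$; this defines a line sub-bundle $\mathcal A \subset V \otimes \OO_L$, and the quotient $\mathcal E := (V \otimes \OO_L)/\mathcal A$ carries a non-degenerate quadratic form $\bar B$ with values in $\mathcal L := \OO_L(1)$. Taking determinants in the non-degeneracy isomorphism $\mathcal E \xrightarrow{\sim} \mathcal E^\vee \otimes \mathcal L$ yields $(\det \mathcal E)^2 = \OO_L(4)$, so $\det\mathcal E = \OO_L(2)$ and hence $\mathcal A = \OO_L(-2)$ (the apex curve is a conic in $\PP^4$). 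For generic $L$ one computes $\mathcal E \cong \OO_L(1)^2 \oplus \OO_L^2$, and the summand $\mathcal F := \OO_L(1)^2$ is automatically isotropic because $\Hom(\Sym^2\mathcal F, \mathcal L) = H^0(\OO_L(-1))^3 = 0$. The two components of $\OG(2,\mathcal E) \to L$ are then identified with $U \times_W L'$ and $U \times_W L''$. One of them (the component whose fibres meet $\mathcal F_t$ in a line) is identified with $\PP(\mathcal F) \cong \mathbb F_0$ via the map sending an isotropic 2-plane to its line of intersection with $\mathcal F_t$. The other component contains $\mathcal F$ as a section, and requires a further explicit calculation (e.g.\ via Kronecker normal form for the singular symmetric pencil $A_0 + tA_1$) to confirm it is $\mathbb F_1$. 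This last identification is the main technical obstacle.

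For (4) we use a topological obstruction. The $\PP^1$-bundle $\eta^\circ$ has a second Stiefel--Whitney class $w \in H^2(W^\circ, \ZZ/2)$, the obstruction to lifting $\eta^\circ$ to an $SU(2)$-bundle, equivalently the mod $2$ reduction of $c_1$ of any local rank $2$ lift (well-defined for a projective bundle). For any embedded $\gamma \cong \PP^1 \subset W^\circ$, the pairing $\langle w, [\gamma] \rangle \in \ZZ/2$ equals the Hirzebruch number of $\eta^{-1}(\gamma)$ modulo $2$. Parts (2) and (3) then give $\langle w, [L'] \rangle = 1$ and $\langle w, [L''] \rangle = 0$, so $[L' - L'']$ has non-zero reduction in $H_2(W^\circ, \ZZ/2)$ and is therefore non-zero in $H_2(W^\circ, \ZZ)$. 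Combined with the numerical triviality from (1), this forces $[L' - L'']$ to generate the $\ZZ/2$-summand of $H_2(W^\circ, \ZZ)$.
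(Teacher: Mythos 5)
Your argument for (1) is fine and is essentially the paper's: both reduce to knowing that the torsion-free part of $H_2(W^\circ,\z)$ is $\z$, generated by a class on which $H$ pairs to $1$, so two degree-one curves are numerically equivalent. Your argument for (4) is also essentially the paper's, just phrased topologically: the invariant you call ``$\langle w,[\gamma]\rangle$'' is the same as the paper's $\sigma_U(\cdot)$, defined as $(C_U\cdot K_{U_C/C})\bmod 2$ for any lift $C_U$ of $C$, which on a section of $\f_n\to\p^1$ computes to $n\bmod 2$. Once one grants (1)--(3), this gives a numerically trivial $1$-cycle $L'-L''$ with $\sigma_U(L'-L'')=1$, hence a nonzero torsion class; that is exactly the paper's Claim~4.6.

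The real issue is (2)--(3), and you have flagged it yourself: you reduce the problem to identifying the two components of $\OG(2,\mathcal E)\to L$ and then write that showing the component carrying $\mathcal F$ as a section is $\f_1$ ``requires a further explicit calculation'' and ``is the main technical obstacle.'' That is a genuine gap, because (2) and (3) are the content of the claim; without pinning down that exactly one component is $\f_1$ (and not, say, both $\f_0$ or both $\f_1$), the parity argument in (4) has nothing to bite on. Note also that the triviality of one component does not force the other to be odd: both components of $\OG(2,\mathcal E)$ over $\p^1$ are Zariski-locally trivial $\p^1$-bundles, so there is no abstract obstruction forcing one of them to be $\f_1$ --- the parity really has to be computed. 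The paper does exactly the computation you gesture at: it writes the pencil in the normal form $Q(\lambda{:}\mu)=\bigl(x_0(\lambda x_2-\mu x_3)=x_1(\mu x_4-\lambda x_3)\bigr)$ (this \emph{is} the Kronecker/Weierstrass normal form for a generic pencil of rank-$4$ symmetric forms disjoint from $Z_3$), observes the constant $2$-plane $(x_0=x_1=0)$, and then writes down the two explicit families of sections $C'(\ell)$ and $C''(\ell)$; for the first family any two distinct sections meet in one point (so the surface is $\f_1$), for the second they are disjoint (so it is $\f_0$). Your more intrinsic set-up (apex subbundle $\mathcal A\cong\o_L(-2)$, quotient $\mathcal E$ with $\o_L(1)$-valued form, splitting $\mathcal E\cong\o_L(1)^2\oplus\o_L^2$) is a clean repackaging and could be pushed through, but as written the decisive step is missing, and the proposal is incomplete without it.
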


By Paragraph~\ref{lines.XZ.say}, 
$X_Z$   contains a
2-parameter family of lines, and  Claim~\ref{c2} applies to them.
Thus we obtain the following.

\begin{claim}\label{c2X}  Let $L\subset X_Z\setminus Z_3$ be a line.
Its preimage in $X$ is a pair of lines $L'\cup L''$, and
$L'-L''$  is a  generator of
the $\z/2$-summand  of $H_2(X, \z)\cong \z+\z/2$. \qed
\end{claim}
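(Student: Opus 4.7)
The plan is to deduce Claim \ref{c2X} directly from Claim \ref{c2} together with the Lefschetz isomorphism recorded in Claim \ref{c2-1}.

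First I would verify that the two rational curves produced by Claim \ref{c2} actually lie in $X$. Since $X = X_W = \sigma^{-1}(X_Z)$, any closed subscheme of $X_Z$ has its $\sigma$-preimage contained in $X$; in particular, for $L \subset X_Z \setminus Z_3$ we have $\sigma^{-1}(L) \subset X$. But $L$ is also a line in $Z \setminus Z_3$, so Claim \ref{c2} applies and gives $\sigma^{-1}(L) = L' \sqcup L''$, a pair of disjoint lines in $W^\circ$, both automatically contained in $X$.

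Next I would invoke the isomorphism
\[
\iota_*\colon H_2(X, \z) \;\xrightarrow{\;\sim\;}\; H_2(W^\circ, \z) \cong \z \oplus \z/2
\]
furnished by Claim \ref{c2-1}, where $\iota\colon X \hookrightarrow W^\circ$ is the closed embedding (note that $X$ is disjoint from $\sing(W) = \sigma^{-1}(\overline Z_{2,5})$ for dimension reasons, since a general fourfold $X$ of the stated form avoids this $8$-dimensional locus). Because $\iota_*$ is induced by a closed embedding, it takes the fundamental class $[C]_X$ of a closed subvariety $C \subset X$ to the fundamental class $[C]_{W^\circ}$ of the same subvariety viewed inside $W^\circ$. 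Applied to $[L'] - [L''] \in H_2(X, \z)$, the image is the corresponding difference in $H_2(W^\circ, \z)$, which by Claim \ref{c2}(4) generates the $\z/2$-summand. Since $\iota_*$ is an isomorphism of abelian groups, $[L'] - [L''] \in H_2(X, \z)$ itself must generate the $\z/2$-summand of $H_2(X, \z) \cong \z \oplus \z/2$.

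No real obstacle arises in this reduction, since the technical content lies in Claims \ref{c1}, \ref{c2-1}, and \ref{c2}, which respectively set up the étale double cover structure over $Z \setminus Z_3$, establish the Lefschetz isomorphism in homology, and analyse the two rulings of rank-$4$ quadrics along a line. The only implicit requirement is the non-emptiness of the hypothesis ``$L \subset X_Z \setminus Z_3$'', which is secured by the $2$-parameter family of lines in $X_Z$ exhibited in Paragraph \ref{lines.XZ.say}.
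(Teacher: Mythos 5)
Your proposal is correct and follows essentially the same route the paper intends: the paper's own ``proof'' of Claim~\ref{c2X} is just the sentence preceding it, which cites Paragraph~\ref{lines.XZ.say} for the existence of lines on $X_Z$ disjoint from $Z_3$ and then asserts that Claim~\ref{c2} applies, leaving implicit precisely the transfer step you spell out via the pushforward isomorphism $H_2(X,\z)\to H_2(W^\circ,\z)$ from Claim~\ref{c2-1}. Your filling in of that step—that the inclusion-induced isomorphism carries fundamental classes of $L'$, $L''$ in $X$ to their fundamental classes in $W^\circ$, so generating the $\z/2$-summand upstairs forces generating it downstairs—is exactly what the paper takes for granted.
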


\begin{say}[Beginning of the proof of Claim~\ref{c2}]
By Claim~\ref{c2-1}  $H_2(X, \z)/(\mbox{torsion})$ is  generated by $K_X\sim -\sigma^*H$,  
 so $L'$ and $L''$ are numerically equivalent.

  By Paragraph~\ref{lines.say},   in suitable coordinates  we can write
$L$ as a family of quadrics
$$
Q(\lambda{:}\mu):=\bigl(x_0(\lambda x_2-\mu x_3)=x_1(\mu x_4-\lambda x_3)\bigr).
$$
All of these contain the 2-plane  $(x_0=x_1=0)$,  defining  a section
$s_0:L\to U$.

The preimage of $L$ in $W$ is a disjoint union of 2 lines
$L'\cup L''$.  We choose $L'$ to be
$\pi\circ s_0:L\to U\to W$.

For any nonzero linear form  $\ell=a\lambda+b\mu $, a section  $C'(\ell)$ of
$\pi: U\to W$ over $L'$ is given by
$$
\ell x_0=\mu x_4-\lambda x_3 \qtq{and} \lambda x_2-\mu x_3=\ell x_1.
$$
For $\ell_1\neq \ell_2$ the 2 sections  $C'(\ell_1), C'(\ell_2)$ meet at the point where $\ell_1=\ell_2$.
Thus  $U\times_WL'$ is the ruled surface  $\f_1$.

In the other family of 2-planes, we have sections  $C''(\ell)$ given by
$$
cx_0=x_1 \qtq{and} \mu x_4-\lambda x_3=c(\lambda x_2-\mu x_3).
$$
These are disjoint for $c_1\neq c_2$. Thus $U\times_WL''$ is the the trivial $\p^1$-bundle.

These show  Claim~\ref{c2}.2--3. \qed
\end{say}

Claim~\ref{c2}.4 is a formal consequence of (\ref{c2}.1--3).
To see this, 
 we need to  discuss how to detect 2-torsion in $H_2$ using
$\p^1$-bundles.  (Similarly, $n$-torsion can be detected using
$\p^{n-1}$-bundles.)

\begin{say}[Comments on $\p^1$-bundles]\label{com1}
Let $X$ be a  normal,  proper variety and 
$\pi:Y\to X$ a $\p^1$-bundle  (\'etale locally trivial).
For a smooth curve $C\to X$, let    $C_Y\to Y_C:=C\times_XY$ be a lifting. Set
$$
\sigma_Y(C):=(C_Y\cdot K_{Y_C/C}) \mod 2.
\eqno{(\ref{com1}.1)}
$$
This is well defined as a function on $A_1(X)$, the group of 1-cycles modulo algebraic equivalence.

If $X$ is smooth and $Y\to X$ has  a rational section  $S\subset Y$, then
$ K_{Y/X}\sim -2S+\pi^*D$ for some Cartier divisor $D$ on $X$. In this case
$$
\sigma_Y(C)\equiv (C\cdot D) \mod 2.
$$
Conversely, assume that  we are over $\c$,  $H_2(X,\z)$ is generated by algebraic curves, and $ \sigma_Y(C)\equiv (C\cdot D) \mod 2$  for every $C$. Then 
$K_{Y/X}-\pi^*D$ is divisible by 2, giving a rational section.
In any case, we get the following.
\end{say}

\begin{claim}\label{c3} If there is a numerically trivial 1-cycle $C$ such that
$\sigma_Y(C)\equiv 1 \mod 2$, then
$[C]$ is a nontrivial torsion element in  $H_2(X,\z)$,
and $Y\to X$ has no rational sections. \qed
\end{claim}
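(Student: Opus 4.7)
My plan is to handle the two conclusions of the claim separately. The ``no rational section'' part follows directly from the computation in Paragraph \ref{com1}: if $\pi\colon Y\to X$ admitted a rational section $S$, then $K_{Y/X}\sim -2S+\pi^*D$ for some Cartier divisor $D$ on $X$, so $\sigma_Y(C)\equiv (C\cdot D)\pmod 2$. Numerical triviality of $C$ would then force $C\cdot D=0$, contradicting $\sigma_Y(C)=1$.

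For the torsion assertion, I would first show that $[C]\in H_2(X,\ZZ)$ is torsion. The assumption of numerical triviality means $[C]$ pairs to zero with every Cartier divisor class. In the intended applications---namely, rationally connected $X$, where $H^2(X,\mathcal O_X)=0$ and hence $\mathrm{NS}(X)=H^2(X,\ZZ)$---this says that $[C]$ pairs trivially with all of $H^2(X,\ZZ)$, and Poincaré duality then forces $[C]$ to lie in the torsion subgroup of $H_2(X,\ZZ)$.

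The nontriviality of $[C]$ requires more work, and the plan is to show that $\sigma_Y$ factors through the mod-$2$ cap product with a cohomology class $w \in H^2(X,\ZZ/2)$; once this is established, $\sigma_Y(C)=1$ forces $[C]\neq 0$ in $H_2(X,\ZZ/2)$, and hence in $H_2(X,\ZZ)$. To construct $w$, consider the mod-$2$ reduction $\bar{c}_1(\omega_{Y/X}) \in H^2(Y,\ZZ/2)$. Its restriction to every fibre of $\pi$ is $-2 \equiv 0 \pmod 2$, and since $R^1\pi_*\underline{\ZZ/2}=0$, the Leray spectral sequence for $\pi$ with $\ZZ/2$-coefficients implies $\bar{c}_1(\omega_{Y/X})=\pi^* w$ for some $w \in H^2(X,\ZZ/2)$. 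For a smooth curve $C\to X$ with lift $C_Y\hookrightarrow Y$, the projection formula then yields
\[
\sigma_Y(C)\equiv [C_Y]\cdot c_1(\omega_{Y/X}) \equiv [C_Y]\cdot \pi^* w \equiv [C]\cdot w \pmod 2,
\]
as desired.

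The main technical obstacle is the construction of $w$ and the verification of this intersection-number identity; the remainder of the argument is routine once $w$ is in hand. An alternative, more invariant construction identifies $w$ with the mod-$2$ Brauer class of the $\PGL_2$-torsor classifying the $\mathbb P^1$-bundle $\pi$, but the elementary Leray-spectral-sequence approach should suffice.
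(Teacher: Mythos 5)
Your argument is correct and, more to the point, it makes explicit a step that the paper leaves unproved: the Claim is given only a \qed, with the preceding paragraph \ref{com1} establishing well-definedness of $\sigma_Y$ on $A_1(X)$ and the relation $\sigma_Y(C)\equiv(C\cdot D)$ in the presence of a rational section. From that, the non-existence of a rational section is immediate (your first paragraph, exactly as the paper intends). But the assertion that $[C]\neq 0$ in $H_2(X,\ZZ)$ requires knowing that $\sigma_Y$ factors through the cycle-class map $A_1(X)\to H_2(X,\ZZ)$ (its kernel, the Griffiths group, is not in general trivial), and the paper never addresses this. Your construction of $w\in H^2(X,\ZZ/2)$ and the identity $\sigma_Y(C)\equiv\langle[C],w\rangle$ supply precisely this: the Leray argument for the mod-$2$ reduction of $c_1(\omega_{Y/X})$ is correct ($E^{1,1}_\infty=0$ since $R^1\pi_*\underline{\ZZ/2}=0$, and $E^{2,0}_2$ receives and emits no differentials, so $F^1H^2=F^2H^2=\pi^*H^2(X,\ZZ/2)$), and the chain of projection-formula identities is valid once one reads $[C_Y]$ as the pushforward cycle class in $Y$, with $\pi_*[C_Y]=[C]$ because $C_Y$ is a section over $C$. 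As you note, $w$ is the class of the $\PGL_2$-torsor in $H^2(X,\ZZ/2)$, and the whole point of $\sigma_Y$ is to compute the pairing with that class on curves.

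Two small remarks. For the torsion assertion you invoke rational connectedness to identify $\mathrm{NS}(X)$ with $H^2(X,\ZZ)$; this is fine for the application, but it is worth observing that for \emph{any} smooth projective $X$ over $\CC$ numerical triviality already forces $[C]$ to be torsion: $[C]$ is of Hodge type $(d-1,d-1)$ in $H^{2d-2}(X,\QQ)$, so it pairs trivially with $H^{2,0}\oplus H^{0,2}$, while Lefschetz $(1,1)$ identifies $H^{1,1}(X,\QQ)$ with $\mathrm{NS}(X)\otimes\QQ$, with which $[C]$ pairs trivially by hypothesis; Poincar\'e duality then gives $[C]=0$ in $H_2(X,\QQ)$. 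Second, in the paper's actual first application (end of the proof of Claim~\ref{c2}) $X=W^\circ$ is only quasi-projective, so the ``proper'' hypothesis of the Claim is not literally met and one should instead argue directly from the computed structure of $H_2(W^\circ,\ZZ)$; this is a looseness of the paper's formulation and not of your proof, but it is worth being aware that your Poincar\'e-duality step needs a substitute there.
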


\begin{say}[End of the proof of Claim~\ref{c2}]
  Since   $U\times_WL'\cong \f_1$, (\ref{com1}.1)   shows that
   $\sigma_U(L')\equiv 1 \mod 2$. Similarly, 
  $U\times_WL''\cong \f_0$ implies that 
  $\sigma_U(L'')\equiv 0 \mod 2$.
  Thus $C:=L'-L''$ is numerically trivial and
   $\sigma_U(C)\equiv 1 \mod 2$.
We can now apply Claim~\ref{c3}. \qed
\medskip

In both cases we could have used the isomorphism
$$
\omega_{U/W}\cong p_1^*\o_{G}(-1)\otimes p_2^*\o_{Z}(1)
$$
to compute $\sigma_U(L')$ and $\sigma_U(L'')$. \qed
\end{say}

\begin{say}[Lines on $Z$]\label{lines.say} By  \cite{kronecker1891algebraische}, the lines on $Z\setminus Z_2$ form 3 families of dimension 20 each. These are the following.

\medskip
(\ref{lines.say}.1) $\langle Q_1, Q_2\rangle$   where the $Q_i$ contain a common 2-plane. The general such line $L$ is disjoint from $Z_3$, and its preimage in $W$ is a pair of disjoint lines $L'\cup L''$. After coordinate change, these can be written as
$$
x_0(\lambda x_2-\mu x_3)=x_1(\mu x_4-\lambda x_3).
$$

\medskip
(\ref{lines.say}.2)  $\langle Q_1, Q_2\rangle$   where the $Q_i$ have a common singular point. After coordinate change, these can be written as
$$
\lambda q_1(x_1, \dots, x_4)+\mu q_2(x_1, \dots, x_4)=0,
$$
where the $q_i$ are quadratic forms.
The general such line $L$ intersects $Z_3$ at 4 points,  and its preimage in $W$ is a smooth, elliptic curve of degree 2.

\medskip
(\ref{lines.say}.3)  $\langle Q_1, Q_2\rangle$   where the $Q_i$ have rank 2 and $\sing Q_i$ is tangent to $Q_{3-i}$.  The general such line $L$ intersects $Z_3$ at 2 points,  and its preimage in $W$ is a smooth, rational curve of degree 2. After coordinate change, these can be written as
$$
\lambda(x_0^2+x_1x_2)+\mu(x_2x_3+x_4^2)=0.
$$
\end{say}

\begin{say}[Lines on $X_Z$]\label{lines.XZ.say}
The space of lines in $Z\setminus Z_3$ has dimension 20, and with each hyperplane section the dimension drops by 2. So the lines on $X_Z$ form 3 families of dimension 2 each. Only (\ref{lines.say}.1) contains lines that are disjoint from $\sing X_Z$. 

Since there are no lines on $Z_3\setminus Z_2$, 
the only common liness to any 2 of these families are the finitely many double tangents of $\sing X_Z$. 
\end{say}

\begin{say}[Another representation of $X_Z$]\label{s10}
  Let $P_5$ be a general 5-dimensional linear system of quadrics on $P_4:=\p^4$.
For $i=4,5$, we have the projections $\pi_i:P_4\times P_5\to P_i$. For brevity let us write  $(a,b):=a\pi_4^*H_4+ b \pi_5^*H_5$, where
 $H_i$ is the hyperplane class on $P_i$.
  Set
  $$
  Y:=\bigl\{(p, Q): p\in P_4, Q\in P_5, p\in \sing Q\bigr\}\subset P_4\times P_5.
  $$
  The condition $ p\in \sing Q$ is equivalent to the partial derivatives of the equation of $Q$ vanishing at $p$. Thus $Y\subset P_4\times P_5$ is the complete intersection of 5 divisors of bidegree $(1,1)$. Write these as
  $$
  \tsum_{i=0}^4\tsum_{j=0}^5 a^\ell_{ij}y_ix_j\qtq{for} \ell=1,\dots, 5.
  \eqno{(\ref{s10}.1)}
  $$
  Over $P_5$, (\ref{s10}.1) is equivalent to a $5\times 5$ symmetric matrix
  whose entries are the linear forms  $m_i^\ell=\tsum_{j=0}^5 a^\ell_{ij}x_j$.
  The condition $\det (m_i^\ell)=0$ defines  $X_Z$ as in Paragraph~\ref{s3}.

  Over $P_4$, (\ref{s10}.1) is equivalent to a $6\times 5$  matrix
  whose entries are the linear forms  $n_j^\ell=\tsum_{i=0}^4 a^\ell_{ij}y_i$.
  Note that $\pi_4:Y\to P_4$ is birational. Its inverse is  the blow-up
  of a surface\footnote{This is not the surface $S$ of Section \ref{defineFano4fold}.} 
  $$
  S\subset P_4,\qtq{defined by }\rank(n_j^\ell)\leq 4.
  $$ Let $E_4\subset Y$ denote the exceptional divisor.
   $Y$ defines a rational map $P_4\map X_Z\subset P_5$, which is given by the
  $5\times 5$ subdeterminants of $(n_j^\ell)$. Thus
  $E_4\sim (5,-1)|_Y$.

  The inverse rational map  $X_Z\map P_4$ is a bit harder to see.
  It is given by a linear system of divisors as follows.
  Let $H\in |H_4|$ be a hyperplane and set
  $$
  D_H:=\bigl\{Q\in X_Z: H\cap \sing Q\neq\emptyset\bigr\}\subset X_Z.
  $$
  Note that the condition $H\cap \sing Q$ is equivalent to
  $Q|_H$ being singular. (Here we need that $Q$ itself is singular.)
  This gives us the equation  $\det (Q|_H)=0$ for $D_H$. It has degree 4.

  We claim that the intersection of $\left(\det (Q|_H)=0\right)$ with $X_Z$ has multiplicity 2.
  To see this, choose coordinates such that  $H=(x_0=0)$ and
  $Q=(x_0^2+x_2^2+x_3^2+x_4^2=0)$. For its deformations we can make linear coordinate changes to the $x_2, x_3, x_4$, but $x_0$  can only be multiplied by a constant. Thus we get a miniversal deformation family
  $$
  \bigl(x_0^2+t_1x_0x_1+t_2x_1^2+x_2^2+x_3^2+x_4^2=0\bigr)\subset 
P_5\times \a^2_{t_1, t_2}.
  $$
  For a given $t_1, t_2$, the quadric has rank $4$ iff $t_1^2-4t_2=0$, and the singular point is on $(x_0=0)$ iff $t_2=0$.  Their intersection is the length 2 scheme $(t_1^2=0)$.

  Thus the $D_H\subset  X_Z$ have degree $10=\frac12 (4\cdot 5)$ and $2D_H\sim 4H_5|_Y$.
  In particular, the divisor class $D_H-2H_5|_Y$ is 2-torsion in the class group $\cl(X_Z)$.
  The corresponding double cover is our  $X$, constructed  in Paragraph~\ref{s3}.

  Let $E_5\subset Y$ denote the exceptional divisor of $\pi_5:Y\to X_Z$.
  The previous computations suggest that it should be linearly equivalent to $(-1,2)$. However, $X_Z$ has multiplicity 2 along the base locus of
  $|D_H|$, so the correct bidegree is $(-2,4)$.

On $P_4$, the 3 families of  lines  (\ref{lines.say}.1--3)  correspond to

(\ref{s10}.1) conics that are 9-secants of $S$,

(\ref{s10}.2) fibers of $E_4\to S$, and 

(\ref{s10}.3) lines that are 4-secants of $S$.

\end{say}

\begin{say}[$X$ as a double $\p^4$]\label{s15}
  By the previous description, $X$ is birational to a double cover of $\p^4$ ramified along the hypersurface  $R:=\pi_4(E_5)\subset P_4$.

  The degree of $R$ is given by  $(1,0)^3[E_5](1,1)^5$, which works out to be 18.
  The degree of the surface $S$ is $(1,0)^2[E_4](0,1)(1,1)^5=15$.
  Note that $S$ is a $6\times 5$ determinantal surface. However, it is not general since we have a symmetry condition on the  $P_5$ side, so  results about general  determinantal surfaces do not apply to $S$.

  The multiplicity of $Y$ along $S$ is 4. This follows from the computation
  $$
  (1,0)^2[E_4][E_5](1,1)^5=60 = 4\cdot \deg S.
  $$
Thus $R$ is in the 4th symbolic power of the homogeneous ideal of $S$, but not in its 4th power. For general  determinantal surfaces these  are equal by \cite{deconcini1980young}.

  Another interesting property of $S$ is that the fibers of $E_5\to \sing X_Z$
  give 5-secants of $S$. Thus $S$ has a 2-parameter family of 5-secants.
Note that  by (\ref{s10}.3), the family of  4-secants   has  dimension 2 as well.

  Most surfaces in $\p^4$, including  general $6\times 5$ determinantal surfaces,  have only  1-parameter families of 5-secants.

  \end{say}

\begin{ack}  I thank  J.~Ottem, C.~Raicu and B.~Totaro   for many   useful comments.
Partial  financial support    was provided  by  the NSF under grant number
DMS-1901855.
\end{ack}

\bibliographystyle{abbrv}

\end{document}